\title{Fusion Rings of Loop Group Representations} 
\author{Christopher L. Douglas}
\thanks{The author was supported by a Miller Research Fellowship.}
\address{Department of Mathematics, University of California, Berkeley, CA 94720, USA}
\email{cdouglas@math.berkeley.edu}
\newtheorem{thm}{Theorem}[section]
\newtheorem{prop}[thm]{Proposition}
\newtheorem{lemma}[thm]{Lemma}
\newtheorem{mlemma}[thm]{Main Lemma}
\newtheorem{cor}[thm]{Corollary}
\theoremstyle{definition}
\newtheorem{defn}[thm]{Definition}
\theoremstyle{remark}
\newtheorem{remark}[thm]{Remark}
\newcommand{\ingm}{\includegraphics[scale=.85]}
\newcommand{\nid}{\noindent}
\newcommand{\ra}{\rightarrow}
\newcommand{\nn}{\nonumber}
\newcommand{\ZZ}{\mathbb Z}
\newcommand{\lam}{\lambda}
\newcommand{\fg}{\mathfrak{g}}
\newcommand{\ft}{\mathfrak{t}}
\newcommand{\bs}{\backslash}
\newcommand{\Lv}{\mathrm{Lv}}
\newcommand{\st}{\, \vert \,}
\newcommand{\cdt}{\!\cdot\!}
\newcommand{\cdts}{\hspace{-1pt}\cdot\hspace{-.4pt}}
\begin{document}

\begin{abstract}

We compute the fusion rings of positive energy representations of the loop groups of the simple, simply connected Lie groups.

\end{abstract}

\vspace*{-20pt}
\maketitle

\vspace*{-20pt}
\section{Introduction}

The representation theory of loop groups has become a crossroads for such diverse areas as conformal field theory, operator algebras, low-dimensional topology, quantum algebra, and homotopy theory.  A crucial aspect of the theory is the existence of a product on the collection of representations of a fixed central extension of a loop group---this product can be variously described as fusion of conformal fields~\cite{verlinde}, as Connes fusion of bimodules over von Neumann algebras~\cite{connes,wassermann}, or as a Pontryagin product in twisted equivariant K-homology~\cite{fht1}.  Our purpose in this paper is to compute the fusion ring structure on the representations of the central extensions of the loop groups of simple, simply connected Lie groups.

The Grothendieck group of positive energy representations of the level $k$ central extension of the loop group of the simple, simply connected group $G$ is isomorphic to a free abelian group on the dominant weights of $G$ of level at most $k$.  The fusion ring $F_k[G]$ of these positive energy representations can be described as a quotient $R[G]/I_k$ of the representation ring of the group $G$ by the ``fusion ideal" $I_k$.  This ideal is generated by an infinite collection of representations associated to the dominant weights of $G$ of level greater than $k$.  In vivid contrast to this unwieldy presentation, Gepner~\cite{gepner} conjectured that the fusion ring is a global complete intersection, which is to say that the fusion ideal $I_k$ is generated by exactly $n$ representations, for a group $G$ of rank $n$.  Despite the existence of a detailed understanding of the complexification of the fusion ring~\cite{aharony}, only recently has there been progress toward a global perspective on the integral fusion ring.  By an elegant Gr\"obner basis analysis, Bouwknegt and Ridout~\cite{bouwrid} showed that for any group $G$ the fusion ideal $I_k$ can be generated by a collection of representations whose size grows exponentially with the level $k$---these representations are associated to a set of dominant weights that collectively encloses the level $k$ Weyl alcove within the Weyl chamber.  In the paper~\cite{douglas-sfi}, we used a spectral sequence for the twisted equivariant $K$-homology of the group $G$, imported along the Freed-Hopkins-Teleman~\cite{fht1,fht2} identification of this $K$-homology with the fusion ring, to prove that there exists a finite level-independent bound on the number of representations necessary to generate the fusion ideal.  In this sequel, we extend the methods developed there to permit a complete computation:

\begin{thm} \label{thm-fusion}
The fusion ring $F_k[G]$ of positive energy representations of the level $k$ central extension of the loop group of the simple, simply connected Lie group $G$ is given as follows:
\small

\begin{align}
F_k[SU(n+1)] &= R[SU(n+1)]/
\begin{aligned}[t]
\langle
[k \lam_1 + \lam_n],
[k \lam_1 + \lam_{n-1}],
\ldots,
[k \lam_1 + \lam_2],
[(k\!+\!1) \lam_1]
\rangle
\end{aligned} 
\displaybreak[0] \nn \\
F_k[Spin(2n+1)] &= R[Spin(2n+1)]/
\begin{aligned}[t]
\langle
&
[(k\!-\!1)\lam_1 + \lam_2],
[(k\!-\!1)\lam_1 + \lam_3],
\ldots,
[(k\!-\!1)\lam_1 + \lam_{n-1}],
\\
&
[(k\!-\!1)\lam_1 + 2 \lam_n],
[k \lam_1 + \lam_n],
[k \lam_1 + \lam_{n-1}],
\ldots,
[k \lam_1 + \lam_3],
\\
&
[k \lam_1 + \lam_2]+[k \lam_1],
[(k\!+\!1) \lam_1]
\rangle
\end{aligned} 
\displaybreak[0] \nn \\
F_k[Sp(n)] &= R[Sp(n)]/
\begin{aligned}[t]
\langle
[k \lam_1 + \lam_n],
[k \lam_1 + \lam_{n-1}],
\ldots,
[k \lam_1 + \lam_2],
[(k\!+\!1) \lam_1]
\rangle
\end{aligned} 
\displaybreak[0] \nn \\
F_k[Spin(2n)] &= R[Spin(2n)]/
\begin{aligned}[t]
\langle
&
[(k\!-\!1)\lam_1 + \lam_2],
[(k\!-\!1)\lam_1 + \lam_3],
\ldots,
[(k\!-\!1)\lam_1 + \lam_{n-2}],
\\
&
[(k\!-\!1)\lam_1 + \lam_{n-1} + \lam_n],
[k \lam_1 + \lam_n],
[k \lam_1 + \lam_{n-1}],
[k \lam_1 + \lam_{n-1} + \lam_n],
\\
&
[k \lam_1 + \lam_{n-2}],
[k \lam_1 + \lam_{n-3}],
\ldots,
[k \lam_1 + \lam_3],
[k \lam_1 + \lam_2] + [k \lam_1],
[(k\!+\!1) \lam_1]
\rangle
\end{aligned} \nn
\end{align}

\begin{align}
F_k[G_2] &= R[G_2]/
\begin{cases}
\langle
\left[3,\ell\!-\!1\right],
\left[1,\ell\right],
\left[0,\ell\!+\!1\right] + \left[0,\ell\right]
\rangle
& \text{if $k$ even} \vspace{4pt}
\\
\langle
\left[0,\langle k\!+\!1 \rangle\right],
\left[3,\langle k\!-\!1 \rangle\right] + \left[3,\langle k\!-\!3 \rangle\right],
\left[2,\langle k\!-\!1 \rangle\right]
\rangle
& \text{if $k$ odd}
\end{cases} 
\displaybreak[0] \nn \\
F_k[F_4] &= R[F_4]/
\begin{cases}
\begin{split}
\langle
&
\left[\ell,0,0,0\right]+\left[\ell\!+\!1,0,0,0\right],
\left[\ell\!-\!2,1,0,0\right]+\left[\ell,1,0,0\right],
\left[\ell\!-\!3,0,2,0\right]+\left[\ell,0,2,0\right],
\\
&
\left[\ell\!-\!2,0,1,1\right]+\left[\ell,0,1,1\right],
\left[\ell\!-\!1,0,1,0\right]+\left[\ell,0,1,0\right],
\left[\ell\!-\!3,1,0,2\right]+\left[\ell\!-\!1,1,0,2\right],
\\
&
\left[\ell\!-\!4,1,1,2\right]+\left[\ell\!-\!2,1,1,2\right],
\left[\ell\!-\!1,0,0,2\right]+\left[\ell,0,0,2\right],
\left[\ell\!-\!2,1,0,1\right]+\left[\ell\!-\!1,1,0,1\right],
\\
&
\left[\ell\!-\!2,0,1,2\right]+\left[\ell\!-\!1,0,1,2\right],
\left[\ell\!-\!3,2,0,0\right]+\left[\ell\!-\!2,2,0,0\right],
\left[\ell\!-\!3,1,0,3\right]+\left[\ell\!-\!2,1,0,3\right],
\\
&
\left[\ell\!-\!3,2,0,2\right]+\left[\ell\!-\!4,2,0,2\right],
\left[\ell\!-\!2,1,1,1\right]+\left[\ell\!-\!3,1,1,1\right],
\left[\ell\!-\!1,0,2,0\right]+\left[\ell\!-\!2,0,2,0\right],
\\
&
\left[\ell\!-\!1,0,2,1\right]+\left[\ell\!-\!3,0,2,1\right],
\left[\ell\!-\!1,1,0,0\right],
\left[\ell\!-\!2,1,0,2\right],
\left[\ell\!-\!1,0,1,1\right],
\left[\ell,0,0,1\right],
\\
&
\left[\ell\!-\!3,1,1,2\right],
\left[\ell\!-\!2,0,2,1\right]
\rangle
\text{      if $k$ even}
\end{split} \vspace{6pt}
\\ 
\begin{split}
\langle
&
\left[\langle k\!-\!3 \rangle,1,0,0\right]\!+\!\left[\langle k\!-\!1 \rangle,1,0,0\right],
\left[\langle k\!-\!5 \rangle,0,2,0\right]\!+\!\left[\langle k\!-\!1 \rangle,0,2,0\right],
\\
&
\left[\langle k\!-\!3 \rangle,0,1,1\right]\!+\!\left[\langle k\!-\!1 \rangle,0,1,1\right],
\left[\langle k\!-\!5 \rangle,1,0,2\right]\!+\!\left[\langle k\!-\!3 \rangle,1,0,2\right],
\\
&
\left[\langle k\!-\!7 \rangle,1,1,2\right]\!+\!\left[\langle k\!-\!5 \rangle,1,1,2\right],
\left[\langle k\!-\!5 \rangle,2,0,2\right]\!+\!\left[\langle k\!-\!9 \rangle,2,0,2\right],
\\
&
\left[\langle k\!-\!3 \rangle,1,1,1\right]\!+\!\left[\langle k\!-\!7 \rangle,1,1,1\right],
\left[\langle k\!+\!1 \rangle,0,0,1\right]\!+\!\left[\langle k\!-\!1 \rangle,0,0,1\right],
\\
&
\left[\langle k\!-\!1 \rangle,1,0,1\right]\!+\!\left[\langle k\!-\!5 \rangle,1,0,1\right],
\left[\langle k\!-\!1 \rangle,0,2,1\right]\!+\!\left[\langle k\!-\!7 \rangle,0,2,1\right],
\\
&
\left[\langle k\!+\!1 \rangle,0,1,0\right]\!+\!\left[\langle k\!-\!3 \rangle,0,1,0\right],
\left[\langle k\!+\!1 \rangle,1,0,0\right]\!+\!\left[\langle k\!-\!5 \rangle,1,0,0\right],
\\
&
\left[\langle k\!+\!3 \rangle,0,0,0\right]\!+\!\left[\langle k\!-\!1 \rangle,0,0,0\right],
\left[\langle k\!+\!1 \rangle,0,0,0\right],
\left[\langle k\!-\!1 \rangle,0,1,0\right],
\\
&
\left[\langle k\!-\!1 \rangle,0,0,2\right],
\left[\langle k\!-\!3 \rangle,1,0,1\right],
\left[\langle k\!-\!3 \rangle,0,1,2\right],
\left[\langle k\!-\!5 \rangle,2,0,0\right],
\\
&
\left[\langle k\!-\!5 \rangle,1,0,3\right],
\left[\langle k\!-\!3 \rangle,0,2,0\right],
\left[\langle k\!-\!7 \rangle,2,0,2\right],
\left[\langle k\!-\!5 \rangle,1,1,1\right]
\rangle
\text{      if $k$ odd}
\end{split}
\end{cases}
\displaybreak[0] \nn \\
F_k[E_6] &= R[E_6]/
\begin{aligned}[t]
\langle
&
[k\!-\!1,1,0,0,0,0],
[k\!-\!2,0,0,1,0,0],
[k\!-\!3,0,1,0,1,0],
[k\!-\!1,0,0,0,1,0],
\\
&
[k\!-\!2,0,1,0,0,1],
[k\!-\!3,0,1,0,1,1],
[k\!-\!1,0,1,0,0,0],
[k\!-\!2,0,0,1,0,1],
\\
&
[k\!-\!2,0,1,0,1,0],
[k,0,0,0,0,1],
[k\!-\!2,1,1,0,0,1],
[k\!-\!1,0,0,1,0,0],
\\
&
[k\!-\!1,0,1,0,0,1],
[k\!-\!1,1,0,0,1,0],
[k\!-\!1,1,0,0,1,1],
[k,0,0,0,1,0],
\\
&
[k,1,0,0,0,1],
[k,1,0,0,1,0],
[k,0,1,0,0,0],
[k\!+\!1,0,0,0,0,0],
\\
&
[k\!-\!1,1,0,0,0,1]+[1,0,0,0,0,0] \cdot \nu,
[k\!-\!3,0,1,1,0,1]-[1,1,0,0,0,0] \cdot \nu,
\\
&
[k\!-\!2,1,0,1,0,1]-[1,0,0,0,0,1] \cdot \nu,
[k\!-\!1,0,0,1,0,1]-[1,0,0,0,0,0] \cdot \nu,
\\
&
[k,1,0,0,0,0]+ \nu,
[k,0,0,1,0,0]- \nu
\rangle
\end{aligned}
\displaybreak[0] \nn \\
F_k[E_7] &= R[E_7]/
\begin{aligned}[t]
\langle
&
[1,0,0,0,0,0,k\!-\!1],
[0,0,1,0,0,0,k\!-\!2],
[0,0,0,1,0,0,k\!-\!3],
[0,1,0,0,1,0,k\!-\!4], 
\\
&
[0,0,0,0,1,0,k\!-\!2],
[0,1,0,0,0,1,k\!-\!3],
[0,1,0,0,1,1,k\!-\!5],
[0,1,0,0,0,0,k\!-\!1],
\\
&
[0,0,0,1,0,1,k\!-\!4],
[0,1,0,0,1,0,k\!-\!3],
[0,0,1,0,0,1,k\!-\!3],
[0,1,0,1,0,1,k\!-\!5],
\\
&
[0,0,0,1,0,0,k\!-\!2],
[0,0,0,0,0,1,k\!-\!1],
[1,1,0,0,0,1,k\!-\!3],
[0,0,1,1,0,1,k\!-\!5],
\\
&
[0,1,0,0,0,1,k\!-\!2],
[1,0,0,1,0,1,k\!-\!4],
[0,0,0,1,0,1,k\!-\!3],
[1,0,1,0,1,1,k\!-\!5],
\\
&
[0,0,1,0,0,0,k\!-\!1],
[0,0,1,0,1,1,k\!-\!4],
[1,0,0,0,1,0,k\!-\!2],
[1,0,0,0,1,1,k\!-\!3],
\\
&
[1,0,1,0,1,0,k\!-\!3],
[0,0,0,0,1,0,k\!-\!1],
[1,0,1,0,1,1,k\!-\!4],
[1,0,0,1,0,0,k\!-\!2],
\\
&
[0,0,1,0,1,0,k\!-\!2],
[1,1,0,0,0,0,k\!-\!1],
[1,0,0,1,1,0,k\!-\!3],
[1,1,0,0,0,1,k\!-\!2],
\\
&
[0,0,0,1,0,0,k\!-\!1],
[1,1,0,0,1,0,k\!-\!2],
[1,0,0,0,0,1,k\!-\!1],
[1,1,0,1,0,0,k\!-\!2],
\\
&
[1,0,0,0,1,0,k\!-\!1],
[0,1,1,0,0,0,k\!-\!1],
[0,1,0,0,0,0,k],
[1,1,1,0,0,0,k\!-\!1],
\\
&
[1,1,0,0,0,0,k],
[0,1,1,0,0,0,k],
[0,0,0,0,1,0,k],
[0,0,0,0,0,1,k],
[0,0,0,0,0,0,k\!+\!1],
\\
&
[1,0,0,0,0,1,k\!-\!2]+[1,0,0,0,0,0,0] \cdot \mu,
[0,1,1,0,0,1,k\!-\!4]-[1,1,0,0,0,0,0] \cdot \mu,
\\
&
[0,0,1,0,1,0,k\!-\!3]-[0,0,1,0,0,0,0] \cdot \mu,
[1,0,1,0,0,1,k\!-\!3]-[1,0,0,0,0,0,1] \cdot \mu,
\\
&
[0,0,1,0,0,1,k\!-\!2]-[1,0,0,0,0,0,0] \cdot \mu,
[1,0,0,1,0,1,k\!-\!3]+[1,0,0,0,0,0,1] \cdot \mu,
\\
&
[1,0,0,0,0,0,k]+\mu,
[1,0,0,1,0,0,k\!-\!1]+[0,0,0,0,0,0,1] \cdot \mu,
\\
&
[0,0,1,0,0,0,k]-\mu,
[0,0,0,1,0,0,k]+\mu
\rangle
\end{aligned}
\displaybreak[0] \nn
\end{align}
\normalsize

\nid The fusion ring of $E_8$ at even and odd levels is given in Table~1 
and Table~2 
in Section~\ref{sec-fe}.

\nid In the above equations for the classical groups, the expression $[\lam] \in R[G]$ refers to the irreducible representation of $G$ with highest weight $\lam$.  For the group $Spin(2n+1)$, the rank $n$ is assumed to be at least 3, and for the group $Spin(2n)$, the rank $n$ is assumed to be at least 4.  In the equations for the exceptional groups, the expression $[a_1, a_2, \ldots, a_n] \in R[G]$ refers to the irreducible representation of $G$ with highest weight $a_1 \lam_1 + a_2 \lam_2 + \cdots + a_n \lam_n$.  Throughout $\lam_i$ are the fundamental weights of $G$ in the Bourbaki ordering.  The abbreviations $\ell$ and $\langle m \rangle$ refer respectively to $\frac{k}{2}$ and $\frac{m}{2}$; the abbreviations $\nu$ and $\mu$ refer respectively to the representations $[k,0,0,0,0,0]$ and $[0,0,0,0,0,0,k]$.

\end{thm}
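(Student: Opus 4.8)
The plan is to combine the Freed--Hopkins--Teleman identification \cite{fht1,fht2} of the fusion ring with the twisted equivariant $K$-homology $K^{\tau}_{*,G}(G)$ of the adjoint action with the spectral sequence of the prequel \cite{douglas-sfi}, which already produces a finite, level-independent generating set for the fusion ideal $I_k$. After complexification, $F_k[G] = R[G]/I_k$ is the ring of functions on the finite set of Verlinde points $\zeta_\lam = \exp(2\pi i (\lam+\rho)/(k+h^\vee))$ indexed by the level $k$ dominant weights $\lam$, so $I_k \otimes \mathbb{C}$ is the ideal of this point set. Accordingly I would split the argument into membership (each listed element lies in $I_k$) and generation (the listed elements generate all of $I_k$).

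For membership I would evaluate the Weyl character of each listed generator at an arbitrary Verlinde point and show it vanishes. A single representation $[\mu]$ in the list has the property that $\mu + \rho$ lies on one of the two outermost affine walls of the level $(k+h^\vee)$ alcove --- for $SU(n+1)$ one checks directly that $k\lam_1 + \lam_j + \rho$ has level exactly $k + h^\vee$ --- so the alternating numerator of the character formula, being anti-invariant under the affine Weyl group, vanishes identically on the Verlinde locus. The binomial generators $[\lam] \pm [\mu]$ and the correction terms such as $[1,0,\ldots]\cdts\nu$ for $E_6$ and $E_7$ arise precisely when a single wall weight is not dominant and must be folded back into the chamber by an affine reflection; the Kac--Walton reflection rule then forces the stated signed combination to vanish at every $\zeta_\lam$. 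This half is a finite, essentially mechanical character computation, carried out uniformly for the classical families and case by case for the exceptional groups.

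The generation half is the crux, and here I would exploit the prequel's spectral sequence. That spectral sequence, built from the stratification of the adjoint action of $G$ on itself by the faces of the Weyl alcove, presents $I_k$ as the image of an explicit boundary map controlled by the centralizer data of the codimension-one faces; its finiteness theorem guarantees a generating set indexed by a level-independent collection of boundary weights along the outer walls. I would rewrite each such boundary generator, via the Weyl character formula and the affine folding, as a $\ZZ$-linear combination of the representations appearing in the theorem, exhibiting the listed elements as an alternative generating set. To certify that nothing is lost in this rewriting I would run a rank count: choosing a monomial order on $R[G] = \ZZ[[\lam_1],\ldots,[\lam_n]]$, I would verify that the leading terms of the listed generators (all monic) carve out exactly the standard monomials in bijection with the level $k$ weights, so that $R[G]/\langle\,\text{listed generators}\,\rangle$ is free abelian of the correct rank; since this quotient surjects onto the equal-rank free abelian group $F_k[G]$, the surjection is an isomorphism and the two ideals coincide.

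The main obstacle is the generation step for the exceptional groups together with the parity dependence for $G_2$, $F_4$, and $E_8$. For these groups the stratum of the adjoint quotient controlling the boundary map changes combinatorial type according to whether $k$ is even or odd --- this is why the generators are indexed by the half-integers $\ell = \tfrac{k}{2}$ and $\langle m \rangle = \tfrac{m}{2}$ and split into two cases --- and confirming that the resulting explicit lists generate requires a careful, partly computer-assisted analysis of the centralizer representation rings and their restriction maps. I expect the classical families to succumb to a uniform induction on the rank $n$, with the two families of wall generators mirroring the two outermost affine walls, while each exceptional group demands its own finite computation to pin down the correction terms $\nu$, $\mu$ and to verify the rank count.
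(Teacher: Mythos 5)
Your membership half is workable: since $F_k[G]$ is torsion-free, $I_k$ really is the full preimage of zero under evaluation at the Verlinde points, so checking that each listed element (after expanding the tensor products in the $\nu,\mu$ correction terms and Kac--Walton folding) vanishes there does prove containment. But the generation half has a genuine gap. Your certification rests on the claim that the leading terms of the listed generators ``carve out exactly the standard monomials in bijection with the level $k$ weights,'' so that the quotient is visibly free abelian of the right rank. This is false already for $SU(3)$: the two generators $[k\lam_1+\lam_2]$ and $[(k\!+\!1)\lam_1]$ have leading monomials $x_1^k x_2$ and $x_1^{k+1}$ in $\ZZ[x_1,x_2]$, and the monomials outside the ideal they generate include all powers of $x_2$ --- infinitely many standard monomials, versus the $\tfrac{1}{2}(k+1)(k+2)$ level-$k$ weights. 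The listed generators do generate $I_k$, but they are very far from a Gr\"obner basis of it; the initial ideal of $I_k$ needs the exponentially large generating sets of Bouwknegt--Ridout, which is precisely the phenomenon this theorem is designed to circumvent. So the leading-term rank count cannot close the argument, and without it your plan to ``rewrite each boundary generator as a combination of the listed ones'' is exactly the hard step, left unaddressed.

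The paper takes a different and entirely integral route that avoids any such certification. Its Main Lemma shows that the fusion ideal equals $d^{e,o}(\ker d^{e,v})$ for a \emph{single} well-chosen affine vertex $v$ of the alcove, by an induction on $|\lam+\rho|$ using the Kac--Walton generators; the problem then becomes computing the kernel of one affine induction map $R[Z_e]\ra R_k[Z_v]$. That kernel is computed exactly, as an $R[G]$-module, using affine Steinberg bases in three structural situations (induction-closed bases, central vertices, and edges orthogonal to the affine wall), which between them cover every simple group. Generation is thus automatic from the exactness of the resolution rather than verified after the fact. One small further correction: the even/odd dichotomy for $G_2$, $F_4$, $E_8$ does not come from the stratification changing type with $k$, but from whether the dual Coxeter label $h_i^\vee=2$ of the chosen vertex divides $k$, which governs whether $R_k[Z_v]$ is untwisted and hence whether the affine Steinberg basis can be obtained by a simple translation.
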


For the fusion rings of the exceptional groups, we have implicitly assumed that the level $k$ is large enough that the entries in the expressions $[a_1, a_2, \ldots, a_n]$ are all at least $-1$; the representation $[a_1, a_2, \ldots, a_n]$ is taken to be zero if some $a_i$ is exactly $-1$.  Thus for $F_4$, $k>5$; for $E_6$, $k>1$; for $E_7$, $k>3$; and for $E_8$, $k>19$.  For smaller levels, the fusion ideal is obtained by inducing the errant weights back into the Weyl chamber.  Thanks to the various available perspectives on representations of loop groups, Theorem~\ref{thm-fusion} could be rephrased as a computation of the twisted $G$-equivariant $K$-homology of the group $G$, or as a computation of the fusion ring of the Wess-Zumino-Witten rational conformal field theory over the group manifold $G$. 

The above descriptions of the fusion rings for type $A$ and type $C$ were known previously~\cite{bouwrid,boysalkumar,douglas-tkt, gepner,gepnerschwimmer} and a proof appears in~\cite{bouwrid}.  By exploiting Jacobi-Trudy determinantal identities, Bouwknegt and Ridout~\cite{bouwrid} also produced an explicit description of the fusion ideal in type $B$ whose complexity grows only linearly with the level.  In both type $B$ and type $D$, Boysal and Kumar~\cite{boysalkumar} wrote down a finite collection of representations which they conjecture generate the fusion ideal; they also describe an intriguing conjecture relating the fusion ideals for these types to a simple radical ideal. 

The paper is organized as follows.  In Section~\ref{sec-affcent}, we recall the computational methods developed in the preceding paper~\cite{douglas-sfi}, particularly those concerning the resolution of the fusion ring coming from the twisted Mayer-Vietoris spectral sequence.  We then prove our main lemma, that the fusion ideal is the induction image of the kernel of an affine induction map to the twisted representation module of the centralizer of any one affine vertex of the Weyl alcove; for brevity we refer to this kernel as the ``fusion kernel".  Section~\ref{sec-kernel} establishes a series of techniques for computing the fusion kernel.  These techniques rely on specialized bases, called affine Steinberg bases, for twisted representation modules; we describe these bases and discuss their relationship to classical Steinberg bases for representation rings.  In Section~\ref{sec-indclosed}, we compute the fusion kernel when there is an affine Steinberg basis that is closed with respect to induction to the chosen affine vertex of the alcove.  In Section~\ref{sec-central}, we compute the fusion kernel when the the affine vertex of the alcove is central.  In Section~\ref{sec-orthog}, we compute the fusion kernel when the edge of the Weyl alcove connecting the origin to the affine vertex is orthogonal to the affine wall of the alcove.  Section~\ref{sec-comp} harnesses the results of the preceding two sections to prove Theorem~\ref{thm-fusion}.

\subsubsection*{Acknowledgments}
We thank Eckhard Meinrenken and Andr\'e Henriques for illuminating correspondence, and Arun Ram and Lauren Williams for helpful pointers to the literature.

\section{Affine centralizers control the fusion ideal} \label{sec-affcent}

In this section we recall the resolution of the fusion ring $F_k[G]$ in terms of the twisted representation modules of centralizer subgroups of $G$, and note the abstract presentation of the fusion ring resulting from this resolution.  We then prove our main lemma, that the fusion ideal is the induction image of the kernel of affine induction to any one affine vertex of the Weyl alcove.  

Though we will briefly review the necessary notation and concepts, we are working throughout in the context established in our preceding paper~\cite{douglas-sfi}; the reader may want to refer there for background material and for more extensive exposition.  Fix a simple, simply connected Lie group $G$, with Lie algebra $\fg$; unless indicated otherwise, all of the following items refer to this Lie algebra.

\begin{align*}
D &= \text{affine Dynkin diagram} \\
\{\alpha_i\}_{i=0}^n &= \text{simple affine roots} \\
\{\lam_i\}_{i=1}^n &= \text{fundamental weights, generating the lattice } \Lambda_W \\
A &= \text{Weyl alcove} \\
w_i &= \text{Weyl reflection corresponding to } \alpha_i \\
w_{k \cdot \alpha_0} &= \text{reflection in the plane } {-}k {\textstyle \frac{\alpha_0}{2}} + \ker h_{\alpha_0}, \text{for } h_{\alpha_0} \,\text{the coroot of } \alpha_0 \\
W_S^k &= 
\begin{cases}
\langle \{w_i \st i \in S\} \rangle & \text{if } 0 \notin S \subset D, \\ 
\langle \{w_i \st i \in S \backslash 0\} \cup \{w_{k \cdot \alpha_0}\} \rangle & \text{if } 0 \in S \subset D 
\end{cases}
\\
F_S &= \text{face of } A \text{ fixed by } W_S^1 \\
Z_S &= \text{centralizer of } F_S \\
\rho_S &= \text{half sum of the positive roots of } Z_S \\
R_k[Z_S] &= \ZZ[\Lambda_W]^{W_S^k}, \,\text{the twisted representation module of } Z_S \\
\end{align*}

For a weight $\lam$ and a reflection group $U$ in the affine Weyl group, let $A_{\lam}^U$ denote the antisymmetrization of $\lam$ with respect to $U$.  The twisted Mayer-Vietoris spectral sequence for the twisted equivariant $K$-homology of the group $G$, reinterpreted in terms of twisted representation modules, has the following form:

\begin{prop} \label{prop-resolution}
There is a complex of $R[G]$-modules $\bigoplus_{S \subset D, \lvert S\rvert = n-p} R_k[Z_S]$ whose differential has components $d^{S,T}: R_k[Z_S] \ra R_k[Z_T]$, for $T=S \cup j_s$, given by twisted induction: 
\[ 
d^{S,T}\bigg(\left[\frac{A_{\mu + \rho_S}^{W_S^k}}{A_{\rho_S}^{W_S^0}}\right]\bigg) = (-1)^s \left[\frac{A_{\mu + \rho_T}^{W_T^k}}{A_{\rho_T}^{W_T^0}}\right] 
\]
This complex is acyclic except in degree zero, where it has homology the level $k$ fusion ring $F_k[G]$.
\end{prop}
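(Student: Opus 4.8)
The plan is to realize this complex as the $E_1$-page of the twisted Mayer--Vietoris spectral sequence constructed in~\cite{douglas-sfi}, and then to translate each of its constituents into the language of twisted representation modules. First I would recall the $G$-invariant cover of $G$ underlying that spectral sequence: the conjugation action of $G$ on itself has orbit space the alcove $A$, and covering $A$ by the open stars of its vertices pulls back to a conjugation-invariant cover of $G$ whose nerve is the full simplex on the vertex set of $A$. A $(p+1)$-fold intersection in this cover retracts $G$-equivariantly onto the orbit $G \cdot F_S$ of the face $F_S$ cut out by the complementary collection of walls, so these intersections are indexed by the subsets $S \subset D$ with $\lvert S \rvert = n-p$; since the stabilizer of $F_S$ is the centralizer $Z_S$, the orbit is $G/Z_S$ and contributes $K^{\tau}_{Z_S}(\mathrm{pt})$ to the $E_1$-page in homological degree $p$.

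Next I would identify each group $K^{\tau}_{Z_S}(\mathrm{pt})$ with $R_k[Z_S] = \ZZ[\Lambda_W]^{W^k_S}$. The twist $\tau$ restricts on the orbit to the level $k$ class on $Z_S$, and the twisted equivariant $K$-homology of a point under a compact group at a regular level is the module of anti-invariants for the affine reflection group attached to that level---here exactly $W^k_S$, where the affine reflection $w_{k \cdot \alpha_0}$ enters precisely when $0 \in S$. Under the Weyl--Kac character formula the distinguished basis of this module is given by the quotients $\left[ A^{W^k_S}_{\mu + \rho_S} / A^{W^0_S}_{\rho_S} \right]$ appearing in the statement, the denominator being the Weyl denominator for the ordinary Weyl group $W^0_S$ of $Z_S$.

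It then remains to match the differentials. The spectral-sequence differential is the alternating sum of the maps induced by the inclusions of $(p+1)$-fold intersections into $p$-fold ones; for $T = S \cup j_s$ the face $F_T$ is a sub-face of $F_S$, and the corresponding equivariant projection $G/Z_S \to G/Z_T$ of orbits induces on covariant twisted $K$-homology the twisted induction map from $Z_S$ to the larger centralizer $Z_T$. Tracing the antisymmetrizer basis through this induction reproduces the stated formula for $d^{S,T}$, with the $\rho_S$- and $\rho_T$-shifts recording the passage between the two Weyl denominators and the sign $(-1)^s$ recording the position of the adjoined index $j_s$ in the ordering of $T$. Finally, because the nerve of the cover is a contractible simplex and each orbit's twisted $K$-homology is concentrated in a single degree that is uniform across the cover (the orbits $G/Z_S$ all being even-dimensional), the $E_1$-page is precisely the displayed complex and the spectral sequence collapses at $E_2$; its abutment, the twisted $G$-equivariant $K$-homology of $G$, is identified by the Freed--Hopkins--Teleman theorem~\cite{fht1,fht2} with the fusion ring $F_k[G]$, which yields acyclicity away from degree zero together with $H_0 \cong F_k[G]$.

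The step I expect to be the main obstacle is the precise identification carried out in the middle two paragraphs: verifying that the restricted twist is the level $k$ class, that the twisted representation module is the anti-invariant module $\ZZ[\Lambda_W]^{W^k_S}$ with the claimed antisymmetrizer basis, and---most delicately---that the inclusion-induced maps dualize to twisted induction with exactly the normalization by $A^{W^0_S}_{\rho_S}$ and the sign $(-1)^s$. This is where the level $k$ and the affine reflection $w_{k \cdot \alpha_0}$ genuinely enter, and where the $\rho$-shifts and signs must be tracked with care; once this dictionary is fixed, the collapse of the spectral sequence and the appeal to Freed--Hopkins--Teleman are formal.
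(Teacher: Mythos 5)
Your overall strategy---realizing the complex as the $E_1$-page of the twisted Mayer--Vietoris spectral sequence for the cover of $G$ pulled back from the vertex stars of the alcove, identifying the terms with the modules $R_k[Z_S]$ via the Weyl--Kac basis, and matching the \v{C}ech differential with twisted induction along $G/Z_S \to G/Z_T$---is exactly the route the paper invokes by citation to Section~2.3 of~\cite{douglas-sfi}, to Meinrenken's resolution, and to Freed--Hopkins--Teleman. Your combinatorial bookkeeping is correct: the $(p+1)$-fold intersections of vertex stars retract onto the faces $F_S$ with $\lvert S \rvert = n-p$ and stabilizer $Z_S$, and $G/Z_S$ is even-dimensional because $Z_S$ has full rank. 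You also rightly flag the twist/normalization dictionary as the delicate identification.

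The genuine gap is in the final step, where you assert that acyclicity away from degree zero is ``formal'' from the collapse at $E_2$ together with the FHT identification of the abutment. It is not. Twisted equivariant $K$-homology is only $\mathbb{Z}/2$-graded, while the filtration index $p$ runs over $0,\ldots,n$. Concentration of the $E_1$-page in a single row $q=q_0$ does force $E_2=E_\infty$, and the vanishing of the abutment in one parity does force $H_p=0$ for all $p$ of the wrong parity; but in the other parity the abutment $F_k[G]$ is only identified with the \emph{associated graded} $\bigoplus_{p\,\mathrm{even}} E_\infty^{p,q_0}=\bigoplus_{p\,\mathrm{even}} H_p$ of the Mayer--Vietoris filtration. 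Since $H_0=E_\infty^{0,q_0}$ is the bottom filtration step, what you obtain formally is an injection $H_0 \hookrightarrow F_k[G]$ whose ``difference'' is accounted for by the groups $H_p$ with $p>0$ even---and the vanishing of precisely those groups is the nontrivial content of the proposition. Closing this requires a separate argument, as in the cited sources: either a direct proof of exactness of the algebraic complex (e.g.\ by complexifying and localizing at regular conjugacy classes, where the complex decomposes into augmented simplicial chain complexes of contractible subcomplexes of the alcove, followed by a torsion-freeness argument), or an independent proof that $H_0$ already has the full rank of $F_k[G]$. Without one of these, your sketch establishes vanishing only in odd degrees.
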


\nid This proposition is discussed in Section~2.3 of~\cite{douglas-sfi} and is closely related to a resolution described by Meinrenken~\cite{meinrenken}---the original identification of the homology of the complex with the fusion ring is due to Freed, Hopkins, and Teleman~\cite{fht1}.  Investigating the cokernel of the first differential in this resolution produces a presentation of the fusion ring in terms of induction maps:

\begin{lemma} \label{lemma-quotient}
The level $k$ fusion ring $F_k[G]$ is isomorphic to the quotient of the representation ring $R[G]$ by the ideal 
$\langle d^{\widehat{01}, \widehat{0}} (\ker d^{\widehat{01}, \widehat{1}}), d^{\widehat{02}, \widehat{0}} (\ker d^{\widehat{02}, \widehat{2}}), \ldots, d^{\widehat{0n}, \widehat{0}} (\ker d^{\widehat{0n}, \widehat{n}}) \rangle$.
\end{lemma}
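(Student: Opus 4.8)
The plan is to compute the cokernel of the first differential $d_1 \colon \bigoplus_{|S|=n-1} R_k[Z_S] \ra \bigoplus_{|S|=n} R_k[Z_S]$ of the complex of Proposition~\ref{prop-resolution}, whose degree-zero homology is $F_k[G]$. The degree-zero summands are indexed by the size-$n$ subsets $\widehat{j} = D\setminus\{j\}$ (the vertices of the alcove $A$), and the degree-one summands by the size-$(n-1)$ subsets $\widehat{ij}$ (its edges). I would first observe that $\widehat{0} = \{1,\dots,n\}$ does not contain the affine node, so $W_{\widehat0}^k = \langle w_1,\dots,w_n\rangle$ is the finite Weyl group independent of $k$; hence $R_k[Z_{\widehat0}] = \ZZ[\Lambda_W]^W = R[G]$, and this is the summand that will carry the presentation.

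Next I would collapse every non-origin vertex into $R[G]$. Each vertex $\widehat m$ with $m\ge 1$ is joined to the origin $\widehat 0$ by the unique edge $\widehat{0m}$, whose two differential components are the inductions $d^{\widehat{0m},\widehat0}\colon R_k[Z_{\widehat{0m}}] \ra R[G]$ and $d^{\widehat{0m},\widehat m}\colon R_k[Z_{\widehat{0m}}]\ra R_k[Z_{\widehat m}]$. Since $\widehat{0m}$ extends only to $\widehat 0$ and $\widehat m$, the relation it imposes in the cokernel reads $d^{\widehat{0m},\widehat m}(y) \equiv \pm\, d^{\widehat{0m},\widehat0}(y)$ for every $y$. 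Provided the inductions $d^{\widehat{0m},\widehat m}$ are \emph{surjective}, this expresses every class of $R_k[Z_{\widehat m}]$ through the image of $R[G]$, so the natural map $R[G]\ra F_k[G]$ is onto and $F_k[G] = R[G]/K$ with $K$ its kernel. Two lifts of a given element of $R_k[Z_{\widehat m}]$ differ by $\ker d^{\widehat{0m},\widehat m}$, so the elimination is well defined precisely modulo $d^{\widehat{0m},\widehat0}(\ker d^{\widehat{0m},\widehat m})$; these are exactly the generators of $I$ attached to $\widehat{0m}$, and they show simultaneously that $I\subseteq K$.

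It remains to prove $K\subseteq I$, i.e.\ that after the elimination no relation survives on $R[G]$ beyond those in $I$. The star edges $\widehat{0m}$ contribute nothing new, being consumed in the elimination, so the only danger is the \emph{far edges} $\widehat{im}$ with $i,m\ge 1$, which map into $\widehat i$ and $\widehat m$ and hence, after elimination, impose a relation of the shape $d^{\widehat{0i},\widehat0}(\tilde y_i)\pm d^{\widehat{0m},\widehat0}(\tilde y_m)$ on $R[G]$. I would dispatch this using the triangular $2$-face $\widehat{0im}$ (the simplex on $v_0,v_i,v_m$) together with $d^2=0$: lifting a far-edge element through the surjection $d^{\widehat{0im},\widehat{im}}$ and comparing the other two components of $d_2$ via the vertex relations encoded in $d_1 d_2 = 0$, the discrepancies at $\widehat i$ and $\widehat m$ land in $\ker d^{\widehat{0i},\widehat i}$ and $\ker d^{\widehat{0m},\widehat m}$, whereupon the $\widehat 0$-component relation forces the far-edge relation into $I$. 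Carrying this out with care for the signs $(-1)^s$, and verifying the surjectivity of the inductions that restore the affine node (edge-to-vertex and $2$-face-to-edge), is the technical heart of the matter; the surjectivity statement is the step I expect to be the main obstacle, and I would establish it from the antisymmetrization description of $R_k[Z_S]$ recalled above or import it from~\cite{douglas-sfi}.
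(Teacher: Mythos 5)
Your proposal is correct and follows essentially the same route as the paper's proof: both arguments rest on the surjectivity of $d^{\widehat{0ij},\widehat{ij}}$ (combined with $d_1 d_2 = 0$, to show the far-edge relations are absorbed by the star-edge relations) and the surjectivity of $d^{\widehat{0i},\widehat{i}}$ (to eliminate the non-origin vertex summands at the cost of the generators $d^{\widehat{0i},\widehat{0}}(\ker d^{\widehat{0i},\widehat{i}})$). The paper performs these two reductions in the opposite order and, like you, defers the surjectivity claims to the antisymmetrization formula of Proposition~\ref{prop-resolution} and to the proof of Theorem~4.3 of~\cite{douglas-sfi}.
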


\nid Here $\widehat{S}$ denotes the complement of the collection $S$ in the affine Dynkin diagram $D$.  This lemma is established as part of the proof of Theorem~4.3 of~\cite{douglas-sfi}: first observe that the differential $d^{\widehat{0ij}, \widehat{ij}}: R_k[Z_{\widehat{0ij}}] \ra R_k[Z_{\widehat{ij}}]$ is surjective; the fusion ring is therefore the cokernel of the sum $\oplus_{i=1}^n (d^{\widehat{0i}, \widehat{0}} - d^{\widehat{0i}, \widehat{i}})$; second observe that the differential $d^{\widehat{0i},\widehat{i}}$ is also surjective; the lemma follows.  

The next lemma shows that the above presentation is redundant---indeed, the induction of the affine induction kernel for any one affine vertex of the Weyl alcove spans the fusion ideal:

\begin{mlemma} \label{lemma-onevert}
For any node $i$ of the Dynkin diagram of $G$, the level $k$ fusion ring is given by
\[F_k[G] = R[G] / \langle d^{\widehat{0i},\widehat{0}} (\ker d^{\widehat{0i},\widehat{i}}) \rangle\]
\end{mlemma}

Henceforth we will only be concerned with a single chosen affine vertex of the Weyl alcove.  We therefore introduce new and more specialized notation before proceeding to the proof of the lemma.

\begin{itemize}
\item[$\scriptstyle\bullet$] $o,v,e$ denote respectively the origin vertex of the alcove $A$, the affine vertex of the alcove associated to the simple root $\alpha_v$, and the edge of the alcove connecting those two vertices.
\item[$\scriptstyle\bullet$] $d^{e,o}: R[Z_e] \ra R[G]$ is the induction map $d^{\widehat{0v}, \widehat{0}}$.
\item[$\scriptstyle\bullet$] $d^{e,v}: R[Z_e] \ra R_k[Z_v]$ is the twisted induction map $d^{\widehat{0v}, \widehat{v}}$.
\item[$\scriptstyle\bullet$] $A_k$ is the level $k$ alcove, bounded by the fixed planes of the reflections $\{w_{k \cdot \alpha_0}, w_1, \ldots, w_n\}$.  
\item[$\scriptstyle\bullet$] $W^v$ is the reflection group $\langle w_{k \cdot \alpha_0}, w_1, \ldots, \widehat{w_v}, \ldots, w_n \rangle$.
\item[$\scriptstyle\bullet$] $W^e$ is the reflection group $\langle w_1, \ldots, \widehat{w_v}, \ldots, w_n \rangle$.
\item[$\scriptstyle\bullet$] $W_{-\rho}$ is the group generated by the reflections $\lam \mapsto w_i(\lam + \rho) - \rho$ for $i \in D \backslash \alpha_0$.
\item[$\scriptstyle\bullet$] $W_{-\rho_v}^v$ is the group generated by the reflections $\lam \mapsto w_i(\lam + \rho_v) - \rho_v$ for $v \neq i \in D \backslash \alpha_0$ and by the reflection $\lam \mapsto w_{k \cdot \alpha_0}(\lam + \rho_v) - \rho_v$.
\item[$\scriptstyle\bullet$] $W_{-\rho}^{k+h^{\vee}}$ is the reflection group generated by the group $W_{-\rho}$ and the groups $W_{-\rho_v}^v$ for all $v \in D \backslash \alpha_0$; the interior weights of the fundamental chamber for this group are the weights in the closed level $k$ alcove $A_k$.
\item[$\scriptstyle\bullet$] $C, C_v, C_e$ denote respectively the unique Weyl chambers for the reflection groups $W$, $W^v$, and $W^e$ containing $A_k$.
\item[$\scriptstyle\bullet$] $[\lam] \in R[G]$ and $[\lam]_e \in R[Z_e]$ denote the irreducible representations with highest weight $\lam$, for respectively $\lam \in C$ and $\lam \in C_e$.
\item[$\scriptstyle\bullet$] $[\lam]_v \in R_k[Z_v]$ denotes the twisted irreducible representation with highest weight $\lam$, for $\lam \in C_v$.
\end{itemize}

\begin{proof}
The ideal $DK := d^{e,o} (\ker d^{e,v}) = d^{\widehat{0i},\widehat{0}} (\ker d^{\widehat{0i},\widehat{i}})$ is certainly contained in the ideal $$\langle d^{\widehat{01}, \widehat{0}} (\ker d^{\widehat{01}, \widehat{1}}), d^{\widehat{02}, \widehat{0}} (\ker d^{\widehat{02}, \widehat{2}}), \ldots, d^{\widehat{0n}, \widehat{0}} (\ker d^{\widehat{0n}, \widehat{n}}) \rangle$$  By Lemma~\ref{lemma-quotient}, that latter ideal is the fusion ideal $I_k[G]$.  The fusion ideal is generated by the following collection of representations:
\[
\left\{[\lam] + (-1)^{a_{\lam}+1} [a_{\lam} \!\cdot\! \lam] \st \lam \in C \backslash (A_k \cup S_k)\right\} 
\cup 
\left\{[\lam] \st \lam \in C \cap S_k \right\}
\]
See for instance the papers~\cite{gepner} and~\cite{bouwrid}.  Here $S_k$ is the collection of weights on the $({-}\rho)$-shifted $(k+h^{\vee})$-scale Stiefel diagram walls, that is the weights with nontrivial isotropy in $W_{-\rho}^{k+h^{\vee}}$.  The transformation $a_{\lam} \in W_{-\rho}^{k+h^{\vee}}$ is the unique element such that $a_{\lam} \cdt \lam \in A_k$.

It suffices, therefore, to check that 
\begin{enumerate}
\item $[\lam] + (-1)^{a_{\lam}+1} [a_{\lam} \cdt \lam] \in DK$ for $\lam \in C \backslash (A_k \cup S_k)$, and 
\item $[\lam] \in DK$ for $\lam \in C \cap S_k$.  
\end{enumerate}
We establish both by induction on the length of the vector $\lam + \rho$.   For any weight $\lam$, let $v_{\lam} \in W_{-\rho_v}^v$ denote an element such that $v_{\lam} \cdt \lam \in C_v$, if one exists, and similarly let $c_{\lam} \in W_{-\rho}$ denote an element such that $c_{\lam} \cdt \lam \in C$, if one exists.  

For case (1), we have $\lam \in C \backslash (A_k \cup S_k)$.  Note that $[\lam]_e + (-1)^{v_{\lam} + 1} [v_{\lam} \cdt \lam]_e \in \ker d^{e,v}$.  It follows that
\begin{equation} \nn
[\lam] + (-1)^{c_{(v_{\lam} \cdts \lam)} + v_{\lam} + 1} [c_{(v_{\lam} \cdts \lam)} \cdt v_{\lam} \cdt \lam] \in DK
\end{equation}
Because $\lam$ is in the chamber $C$ but of level strictly larger than $k+1$, the vector $v_{\lam} \cdt \lam + \rho$ is strictly shorter than the vector $\lam + \rho$; the vector $c_{(v_{\lam} \cdts \lam)} \cdt v_{\lam} \cdt \lam + \rho$ is the same length as $v_{\lam} \cdt \lam + \rho$.  By induction
\begin{equation} \nn
[c_{(v_{\lam} \cdts \lam)} \cdt v_{\lam} \cdt \lam] + (-1)^{a_{(c_{(v_{\lam} \cdts \lam)} \cdts v_{\lam} \cdts \lam)} + 1}
[a_{(c_{(v_{\lam} \cdts \lam)} \cdts v_{\lam} \cdts \lam)} \cdt c_{(v_{\lam} \cdts \lam)} \cdt v_{\lam} \cdt \lam] \in DK
\end{equation}
Of course, the weight $a_{(c_{(v_{\lam} \cdts \lam)} \cdts v_{\lam} \cdts \lam)} \cdt c_{(v_{\lam} \cdts \lam)} \cdt v_{\lam} \cdt \lam$ just is $a_{\lam} \cdt \lam$.  Adding the above two displayed elements of the ideal $DK$ (after multiplying the second element by $(-1)^{c_{(v_{\lam} \cdts \lam)} + v_{\lam}}$) gives
\begin{equation} \nn
[\lam] + (-1)^{c_{(v_{\lam} \cdts \lam)} + v_{\lam} + a_{(c_{(v_{\lam} \cdts \lam)} \cdts v_{\lam} \cdts \lam)} + 1} [a_{\lam} \cdt \lam] \in DK
\end{equation}
Finally observe that the sum of orders $\lvert c_{(v_{\lam} \cdts \lam)} \rvert + \lvert v_{\lam} \rvert + \lvert a_{(c_{(v_{\lam} \cdts \lam)} \cdts v_{\lam} \cdts \lam)} \rvert$ and the order $\lvert a_{\lam} \rvert$ are congruent modulo $2$.  The whole induction begins with the situation $\lam \in A_k$; here $a_{\lam}$ is the identity and so evidently $[\lam] + (-1)^{a_{\lam} + 1} [a_{\lam} \cdt \lam] \in DK$.

Case (2), where $\lam \in C \cap S_k$, is analogous.  If $v_{\lam}$ does not exist, then $[\lam]_e \in \ker d^{e,v}$ and so $[\lam] \in DK$.  If $v_{\lam}$ exists but $c_{(v_{\lam} \cdts \lam)}$ does not, then again $[\lam]_e + (-1)^{v_{\lam} + 1} [v_{\lam} \cdt \lam]_e \in \ker d^{e,v}$ which implies $[\lam] \in DK$.  Those two cases begin the induction.  In general we find as before $[\lam] + (-1)^{c_{(v_{\lam} \cdts \lam)} + v_{\lam} + 1} [c_{(v_{\lam} \cdts \lam)} \cdt v_{\lam} \cdt \lam] \in DK$.  Again, provided $\lam$ is not level $k+1$ (in which case $v_{\lam}$ does not exist), the vector $c_{(v_{\lam} \cdts \lam)} \cdt v_{\lam} \cdt \lam + \rho$ is shorter than $\lam + \rho$.  By induction, $[c_{(v_{\lam} \cdts \lam)} \cdt v_{\lam} \cdt \lam] \in DK$, and so $[\lam] \in DK$ as desired.
\end{proof}

\section{Affine Steinberg bases and the fusion kernel} \label{sec-kernel} 

Lemma~\ref{lemma-onevert} shows that the fusion ideal is the induction image of the kernel of the affine induction map $d^{e,v}$.  In this section we establish techniques for computing that ``fusion kernel" $\ker d^{e,v}$.  The map $d^{e,v}: R[Z_e] \ra R_k[Z_v]$ is an $R[G]$-module map; we begin by discussing highest weight $R[G]$-module bases for $R[Z_e]$ that determine by restriction $R[G]$-module bases for $R_k[Z_v]$---these ``level $k$ affine Steinberg bases for $(R[Z_e],R_k[Z_v])$" will facilitate an analysis of the fusion kernel.  In Section~\ref{sec-indclosed}, we provide a complete description of the kernel in case there exists an affine Steinberg basis that is closed under the induction map $d^{e,v}$; such a basis exists for a vertex of the alcove of each of the classical groups and a vertex of the alcove of $G_2$.  In Section~\ref{sec-central}, we analyze the kernel when the vertex $v$ of the alcove is central, that is $Z_v = G$; this holds in particular for a vertex of the alcove of $E_6$ and a vertex of the alcove of $E_7$.  Finally in Section~\ref{sec-orthog}, we specify the kernel when the edge $e$ is orthogonal to the affine wall of the Weyl alcove; this is the case for a vertex of the alcove of $F_4$ and a vertex of the alcove of $E_8$.

Most of our discussion utilizes affine Steinberg bases for the representation modules associated to the centralizers $Z_e$ and $Z_v$.  More generally, the notion is as follows.

\begin{defn}
Let $S \subset T$ be two subsets of the affine Dynkin diagram of the simple, simply connected Lie group $G$; as before denote by $Z_S \subset Z_T$ the centralizers of the corresponding faces of the Weyl alcove.  Choose compatible Weyl chambers $C_T \subset C_S$ for the twisted representation modules $R_k[Z_T]$ and $R_k[Z_S]$ such that both chambers contain the level $k$ Weyl alcove $A_k$.  Two collections of weights $B_S \subset \Lambda_W$ and $B_T \subset \Lambda_W$, with $B_T = B_S \cap C_T$, form a \emph{level $k$ affine Steinberg basis} $(B_S, B_T)$ for the pair $(Z_S, Z_T)$ if the collection of twisted irreducible representations $\{[\lam]_S \in R_k[Z_S] \st \lam \in B_S\}$ is an $R[G]$-module basis for $R_k[Z_S]$, and if the collection of twisted irreducible representations $\{[\lam]_T \in R_k[Z_T] \st \lam \in B_T\}$ is an $R[G]$-module basis for $R_k[Z_T]$.
\end{defn} 
\nid Because, given the chosen Weyl chambers, the collection $B_T$ is determined by the collection $B_S$, we sometimes refer to $B_S$ by itself as the affine Steinberg basis for the pair $(Z_S, Z_T)$.

Though we expect that for any $S \subset T \subset D$, there exists a level $k$ affine Steinberg basis for the pair $(Z_S, Z_T)$, this remains an open problem.  Observe that Steinberg's construction of bases for representation rings of full rank subgroups of Lie groups~\cite{steinberg} depends only on working with reflection subgroups of the Weyl group.  If the twisted representation module $R_k[Z_T]$ is in fact untwisted, in the sense that it is isomorphic to $R[Z_T]$ as an $R[Z_T]$-module, then the corresponding Weyl group $W^k_T \subset W^{\text{aff}}$ is translation conjugate to a subgroup of the ordinary Weyl group.  Steinberg's arguments then apply mutatis mutandis to construct an affine Steinberg basis:

\begin{prop}
Suppose the representation module $R_k[Z_T]$ is untwisted, and therefore $R_k[Z_S]$ is also untwisted.  In this case, there exists a level $k$ affine Steinberg basis for the pair $(Z_S, Z_T)$.
\end{prop}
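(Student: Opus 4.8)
The plan is to deduce the existence of the affine Steinberg basis from Steinberg's classical construction~\cite{steinberg} of $R[G]$-module bases for representation rings of full-rank subgroups, using the untwisting hypothesis to replace the affine reflection groups by honest finite reflection groups of $G$. Since $S \subset T$, the generators of $W_S^k$ are among those of $W_T^k$, so $W_S^k \subseteq W_T^k$; and $W_T^k$ is the finite reflection group fixing the affine vertex of the alcove cut out by the walls indexed by $T$. The hypothesis that $R_k[Z_T]$ is untwisted furnishes a translation $t \in W^{\aff}$ carrying that vertex to the origin, hence conjugating $W_T^k$ onto a reflection subgroup $\overline{W}_T := t W_T^k t^{-1}$ of the finite Weyl group $W$, and simultaneously conjugating $W_S^k$ onto a reflection subgroup $\overline{W}_S := t W_S^k t^{-1} \subseteq \overline{W}_T \subseteq W$. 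As each of these is generated by reflections in genuine roots of $\fg$, they are the Weyl groups of full-rank (possibly non-parabolic, Borel--de Siebenthal) subgroups $H_S \supseteq H_T$ of $G$, to which Steinberg's theorem applies. Realizing the translation as multiplication by the monomial $e^{t}$ gives an algebra automorphism $\phi_t$ of $\ZZ[\Lambda_W]$ restricting to isomorphisms $\phi_t \colon R_k[Z_S] = \ZZ[\Lambda_W]^{W_S^k} \to \ZZ[\Lambda_W]^{\overline{W}_S} = R[H_S]$ and $\phi_t \colon R_k[Z_T] \to \ZZ[\Lambda_W]^{\overline{W}_T} = R[H_T]$.

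First I would check that $\phi_t$ is a map of $R[G]$-modules. The $R[G]$-module structure on $R_k[Z_T]$ is multiplication of characters in $\ZZ[\Lambda_W]$ followed by the signed $W_T^k$-averaging (the antisymmetrization $A^{W_T^k}$) that projects back onto $W_T^k$-invariants; conjugation by $\phi_t$ carries this averaging operator to the signed $\overline{W}_T$-averaging because $t W_T^k t^{-1} = \overline{W}_T$, while $\phi_t$ commutes with multiplication by the $W$-invariant elements of $R[G]$ since multiplication is commutative. The same holds with $S$ in place of $T$. Thus $\phi_t$ identifies the pair of $R[G]$-modules $(R_k[Z_S], R_k[Z_T])$, together with the inclusion $R_k[Z_T] \subseteq R_k[Z_S]$ and the chambers $C_T \subset C_S$, with the classical pair $(R[H_S], R[H_T])$, its inclusion, and the corresponding chambers for $\overline{W}_S \subseteq \overline{W}_T$. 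It therefore suffices to produce a classical Steinberg basis $\overline{B}_S$ for $R[H_S]$ over $R[G]$ whose weights lying in the $H_T$-chamber form a Steinberg basis $\overline{B}_T$ for $R[H_T]$; pulling back by $\phi_t^{-1}$ then yields the desired affine Steinberg basis with $B_T = B_S \cap C_T$.

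The remaining, and principal, task is to run Steinberg's construction coherently for the nested pair $\overline{W}_S \subseteq \overline{W}_T \subseteq W$. Steinberg's basis for $\ZZ[\Lambda_W]^{U}$ over $R[G]$ is indexed by the coset space $W/U$ and consists of the irreducibles attached to an explicit family of $U$-dominant ``Steinberg weights'' enjoying a unitriangularity property against the $W$-invariant characters. I would fix a compatible system of distinguished (length-minimal) coset representatives for the tower, affording a bijection $W/\overline{W}_S \cong \{(c,d) : c \in W/\overline{W}_T,\ d \in \overline{W}_T/\overline{W}_S\}$, and then verify that the Steinberg weight attached to a coset of $W/\overline{W}_S$ lies in the $H_T$-chamber precisely when its second coordinate $d$ is trivial, in which case it coincides with the Steinberg weight of the corresponding coset of $W/\overline{W}_T$. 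This coherence of Steinberg's weight assignment under passage to the intermediate subgroup is the crux, for it is exactly what forces $\overline{B}_T = \overline{B}_S \cap C_T$, and hence $B_T = B_S \cap C_T$. I expect this compatibility to be the main obstacle: Steinberg's weights are defined by a recipe tied to the ambient pair of chambers, and since $\overline{W}_S, \overline{W}_T$ need not be standard parabolic, one must check that the recipe is genuinely functorial in the intermediate reflection group rather than merely producing two unrelated bases. Once this is established, the transported collection $(B_S, B_T) = (\phi_t^{-1}\overline{B}_S, \phi_t^{-1}\overline{B}_T)$ is by construction a level $k$ affine Steinberg basis for the pair $(Z_S, Z_T)$.
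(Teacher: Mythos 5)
Your argument is essentially the paper's: the untwisting hypothesis supplies a weight $\xi$ on the fixed plane of $W_T^k$, translation by $\xi$ conjugates $W_S^k \subseteq W_T^k$ into a nested pair of full-rank reflection subgroups of $W$, and Steinberg's construction is then transported back along the translation. The one step you single out as the crux---that the Steinberg weights for $\overline{W}_S$ lying in the $\overline{W}_T$-chamber are exactly the Steinberg weights for $\overline{W}_T$, so that $B_T = B_S \cap C_T$---is precisely the ``Furthermore'' clause of Steinberg's theorem as quoted in Section~\ref{sec-comp}, so it is available off the shelf rather than needing a fresh coset-by-coset verification.
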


More concretely, a specific affine Steinberg basis can be described as follows.  Let $F^k_T$ denote the affine subspace of the dual torus $\ft^{\ast}$ fixed by the reflection group $W^k_T$; this plane is spanned by the face of the level $k$ alcove $A_k$ associated to the collection $T \subset D$.  If the representation module $R_k[Z_T]$ is untwisted, then the plane $F^k_T$ contains some element $\xi$ of the weight lattice of $G$---see~\cite{douglas-sfi} for a more detailed discussion of the circumstances under which representation modules are twisted or untwisted.  Choose Weyl chambers $\overline{C} \subset \overline{C}_T \subset \overline{C}_S$ for $R[G]$, $R[Z_T]$, and $R[Z_S]$ such that the shifted affine chambers $\overline{C}_T + \xi$ and $\overline{C}_S + \xi$ contain the level $k$ Weyl alcove $A_k$.  If $\bar{B}$ is the ordinary Steinberg basis for $R[Z_S]$ with respect to the pair of chambers $\overline{C} \subset \overline{C}_S$, then $\{\lam + \xi \st \lam \in \bar{B}\}$ is an affine Steinberg basis for the pair $(Z_S, Z_T)$.

\subsection{Induction closed Steinberg bases} \label{sec-indclosed} 

The fusion kernel has a particularly simple form when the collection of weights of the level $k$ affine Steinberg basis $B_e$ for the pair $(Z_e, Z_v)$ is closed under the induction map $d^{e,v}$.  Let $B_v$ denote the intersection $B_e \cap C_v$.

\begin{prop} \label{prop-indclosed}
Suppose $(B_e, B_v)$ is a level $k$ affine Steinberg basis for the centralizers $(Z_e, Z_v)$.  For any weight $\lam$, let $v_{\lam} \in W_{-\rho_v}^v$ be an affine Weyl element such that $v_{\lam} \cdt \lam \in C_v$, if one exists; if no such $v_{\lam}$ exists, we say the weight $\lam$ is $v$-singular, and denote by $S_v$ the collection of $v$-singular weights in $B_e$.  If for every weight $\lam \in B_e \backslash B_v$ either $\lam \in S_v$ or $v_{\lam} \cdt \lam \in B_v$, then the fusion kernel is
\begin{equation} \nn
\ker d^{e,v} = \left\langle \left\{[\lam]_e + (-1)^{v_{\lam} + 1} [v_{\lam} \cdt \lam]_e \st \lam \in B_e \backslash (B_v \cup S_v)\right\} \cup \big\{[\lam]_e \st \lam \in S_v\big\} \right\rangle
\end{equation}
\end{prop}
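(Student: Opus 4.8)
The plan is to use the fact that, relative to the affine Steinberg bases $\{[\lam]_e \st \lam \in B_e\}$ of $R[Z_e]$ and $\{[\mu]_v \st \mu \in B_v\}$ of $R_k[Z_v]$, the hypothesis forces the $R[G]$-module map $d^{e,v}$ to have an essentially monomial matrix: each basis vector is carried either to zero or to a signed single basis vector of the target. First I would record the action of $d^{e,v}$ on a basis element. Since $e = \widehat{0v}$ omits the affine node, $R[Z_e]$ is untwisted and the Weyl character formula writes $[\lam]_e = A_{\lam+\rho_e}^{W_e^k}/A_{\rho_e}^{W_e^0}$ (with $W_e^k = W_e^0$); feeding this into the antisymmetrization description of the differential from Proposition~\ref{prop-resolution} gives
\[
d^{e,v}([\lam]_e) = \pm \frac{A_{\lam+\rho_v}^{W_v^k}}{A_{\rho_v}^{W_v^0}}.
\]
If $\lam$ is $v$-singular, so that $\lam+\rho_v$ lies on a wall of $W_v^k$, the numerator vanishes and $d^{e,v}([\lam]_e)=0$; otherwise the unique $v_\lam \in W_{-\rho_v}^v$ carrying $\lam$ into $C_v$ satisfies $A_{\lam+\rho_v}^{W_v^k} = (-1)^{v_\lam} A_{(v_\lam \cdt \lam)+\rho_v}^{W_v^k}$, whence $d^{e,v}([\lam]_e) = \pm(-1)^{v_\lam}[v_\lam \cdt \lam]_v$. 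Pinning this action down precisely — in particular fixing the overall sign — is the step demanding the most care, although that overall sign will cancel and play no role in the kernel.

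The hypothesis then supplies the essential input: for $\lam \in B_e \bs (B_v \cup S_v)$ it guarantees $v_\lam \cdt \lam \in B_v$, so the image of every basis vector is, up to sign, a genuine target basis vector. In particular each $[\mu]_v$ with $\mu \in B_v$ is hit by $[\mu]_e$ with a unit coefficient, so $d^{e,v}$ is surjective. A direct computation then shows each proposed generator lies in the kernel: for $\lam \in B_e \bs (B_v \cup S_v)$ the two summands of $[\lam]_e + (-1)^{v_\lam+1}[v_\lam \cdt \lam]_e$ map to cancelling multiples of $[v_\lam \cdt \lam]_v$, and each $[\lam]_e$ with $\lam \in S_v$ maps to zero. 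This yields the inclusion of the right-hand submodule into $\ker d^{e,v}$.

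For the reverse inclusion I would take an arbitrary $x = \sum_{\lam \in B_e} c_\lam [\lam]_e \in \ker d^{e,v}$, with $c_\lam \in R[G]$, and reduce it modulo the generators in a single pass. Subtracting $\sum_{\lam \in S_v} c_\lam [\lam]_e$ together with $\sum_{\lam \in B_e \bs (B_v \cup S_v)} c_\lam\big([\lam]_e + (-1)^{v_\lam+1}[v_\lam \cdt \lam]_e\big)$ eliminates every coefficient outside $B_v$ without disturbing the rest, since the supports of distinct generators meet $B_e \bs (B_v \cup S_v)$ only in their leading terms; this leaves an element $x'$ supported on $B_v$. Expanding the equation $d^{e,v}(x)=0$ in the target basis and collecting the coefficient of each $[\mu]_v$ gives, because $\{[\mu]_v\}$ is an $R[G]$-basis, the relations $c_\mu + \sum_{v_\lam \cdt \lam = \mu}(-1)^{v_\lam} c_\lam = 0$ for every $\mu \in B_v$. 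These are exactly the $B_v$-coefficients of the reduced element $x'$, so $x' = 0$ and $x$ lies in the span of the generators. The entire argument is linear algebra over $R[G]$ once the monomial structure of $d^{e,v}$ is established, so the genuine content is concentrated in the first step.
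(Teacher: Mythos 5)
Your proof is correct and follows essentially the same route as the paper: both verify the listed elements lie in the kernel and then use the coefficient relations coming from expanding $d^{e,v}(x)=0$ in the basis $\{[\mu]_v \st \mu \in B_v\}$ to express an arbitrary kernel element in terms of the generators. The only difference is presentational --- you make the monomial structure of $d^{e,v}$ explicit via the antisymmetrization formula and phrase the reverse inclusion as ``subtract generators, then show the residual supported on $B_v$ vanishes,'' whereas the paper substitutes the relation $r_\mu = \sum_{\lam \in \mathcal{O}_\mu \backslash \mu} (-1)^{v_\lam+1} r_\lam$ directly into the basis expansion.
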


\begin{proof}

Evidently, the listed elements $\{[\lam]_e + (-1)^{v_{\lam} + 1} [v_{\lam} \cdt \lam]_e \st \lam \in B_e \backslash (B_v \cup S_v)\}$ and $\{[\lam]_e \st \lam \in S_v\}$ are in the kernel.  Given a representation $r \in R[Z_e]$ in the kernel of $d^{e,v}$, write
\begin{equation} \nn
r = \Bigg( \sum_{\lam \in S_v} r_{\lam} [\lam]_e \Bigg) + \Bigg( \sum_{\lam \in B_e \backslash S_v} r_{\lam} [\lam]_e \Bigg)
\end{equation}
Here for any weight $\lam$ the coefficient $r_{\lam}$ is in $R[G]$.  Because $d^{e,v} r = 0$, we have
\begin{equation} \nn
\sum_{\lam \in B_e \backslash S_v} (-1)^{v_{\lam}} r_{\lam} [v_{\lam} \cdt \lam]_v = 0
\end{equation}
For each $\mu \in B_v$, let $\mathcal{O}_{\mu}$ denote the collection of weights $\lam \in B_e$ such that $v_{\lam} \cdt \lam = \mu$.  For any such $\mu$, it follows that
\begin{equation} \nn
\sum_{\lam \in \mathcal{O}_\mu} (-1)^{v_{\lam}} r_{\lam} = 0
\end{equation}
Rewriting this coefficient equation as
\begin{equation} \nn
r_\mu = \sum_{\lam \in \mathcal{O}_\mu \backslash \mu} (-1)^{v_\lam + 1} r_\lam
\end{equation}
we find that
\begin{equation} \nn
\begin{split}
r 
&= \Bigg( \sum_{\lam \in S_v} r_{\lam} [\lam]_e \Bigg) + \Bigg( \sum_{\lam \in B_e \backslash S_v} r_{\lam} [\lam]_e \Bigg) \\
&= \Bigg( \sum_{\lam \in S_v} r_{\lam} [\lam]_e \Bigg) + \Bigg( \sum_{\lam \in B_e \backslash (S_v \cup B_v)} r_{\lam} \left( [\lam]_e + (-1)^{v_\lam + 1} [v_\lam \cdt \lam]_e \right) \Bigg)
\end{split}
\end{equation}
as desired.
\end{proof}

\subsection{Central Weyl alcove vertices} \label{sec-central}

Often the affine Steinberg basis $(B_e, B_v)$ for the centralizers $(Z_e, Z_v)$ has the property that $B_v$ is a small collection of weights, and so there is little hope that affine induction takes all the weights of $B_e$ into $B_v$, as considered in the last section.  However, if $B_v$ has exactly one weight, that is if the centralizer $Z_v$ is the whole group $G$, then again the kernel of induction to $Z_v$ has a particularly clean description.

\begin{prop} \label{prop-central}
Suppose the vertex $v$ of the Weyl alcove, corresponding to the simple root $\alpha_i$, is central in $G$, and $(B_e, B_v)$ is a level $k$ affine Steinberg basis for the pair $(Z_e, Z_v)$.  The collection $B_v$ is the single weight at that vertex of the level $k$ alcove, namely $k \frac{\lam_i}{h^{\vee}_i}$, where $h^{\vee}_i$ is the dual Coxeter label of the root $\alpha_i$; we abbreviate that weight by $\theta$.  In this case the fusion kernel is
\begin{equation} \nn
\ker d^{e,v} = \big\langle \big\{ [\lam]_e + (-1)^{v_\lam + 1} [w (v_\lam \cdt \lam - \theta)] \cdt [\theta]_e \st \lam \in B_e \backslash (B_v \cup S_v) \big\} \cup \big\{[\lam]_e \st \lam \in S_v\big\} \big\rangle
\end{equation}
Here $S_v$ is again the $v$-singular weights in $B_e$, and $w \in W$ is the Weyl element such that $w(C_v - \theta) = C$, that is the Weyl element aligning the Weyl chambers of the vertex and the origin of the alcove.
\end{prop}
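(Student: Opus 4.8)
The plan is to follow the template of Proposition~\ref{prop-indclosed}, the essential new input being that the target module is free of rank one. Since $(B_e,B_v)$ is an affine Steinberg basis and $B_v=\{\theta\}$, the module $R_k[Z_v]$ is a free $R[G]$-module of rank one generated by $[\theta]_v$. The induction-closed hypothesis of Proposition~\ref{prop-indclosed} cannot hold here---affine induction cannot carry every basis weight of $B_e$ to the single weight $\theta$---so the first task is to record precisely how the twisted irreducible $[\mu]_v$ sits inside this rank-one module for an arbitrary $\mu\in C_v$.

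First I would establish the shift formula $[\mu]_v=[w(\mu-\theta)]\cdt[\theta]_v$ for every $\mu\in C_v$, where $[w(\mu-\theta)]\in R[G]$ is the genuine irreducible of $G$ with highest weight $w(\mu-\theta)\in C$, dominant precisely because $w$ was chosen to carry $C_v-\theta$ onto $C$. Both sides lie in the rank-one free module $R[G]\cdt[\theta]_v$, so it suffices to compare $R[G]$-coefficients. I would do this through the Weyl--Kac character formula: writing $[\mu]_v=A^{W_v^k}_{\mu+\rho}/A^{W}_{\rho}$ and $[\theta]_v=A^{W_v^k}_{\theta+\rho}/A^{W}_{\rho}$ (using $\rho_v=\rho$ since $Z_v=G$), the quotient $[\mu]_v/[\theta]_v$ becomes the ratio $A^{W_v^k}_{\mu+\rho}/A^{W_v^k}_{\theta+\rho}$ of affine antisymmetrizers, which must be identified with the finite character $A^{W}_{w(\mu-\theta)+\rho}/A^{W}_{\rho}$. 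Equivalently, the $R[G]$-module isomorphism $[\nu]\mapsto[\nu]\cdt[\theta]_v$ matches the irreducible $[\nu]$ to the twisted irreducible at $\theta+w^{-1}\nu$: tensoring $\theta$ with $\nu$ and folding the result back into $C_v$ by the affine dot action produces, as its extreme term, the weight $\theta+w^{-1}\nu$, and a triangularity argument in the length of $\mu+\rho$ upgrades this leading identification to exact equality. This shift formula is the main obstacle; everything after it is formal.

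Granting the shift formula, I would verify that the listed elements lie in $\ker d^{e,v}$. For $\lam\in S_v$ the weight is $v$-singular, so $d^{e,v}[\lam]_e=0$. For $\lam\in B_e\backslash(B_v\cup S_v)$, affine induction gives $d^{e,v}[\lam]_e=(-1)^{v_\lam}[v_\lam\cdt\lam]_v=(-1)^{v_\lam}[w(v_\lam\cdt\lam-\theta)]\cdt[\theta]_v$ by the shift formula, while $R[G]$-linearity of $d^{e,v}$ together with $d^{e,v}[\theta]_e=[\theta]_v$ (as $\theta\in C_v$) gives $d^{e,v}\big([w(v_\lam\cdt\lam-\theta)]\cdt[\theta]_e\big)=[w(v_\lam\cdt\lam-\theta)]\cdt[\theta]_v$; the two terms cancel, so the generator maps to zero.

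Finally I would show these elements span the kernel, paralleling Proposition~\ref{prop-indclosed}. Given $r=\sum_{\lam\in B_e}r_\lam[\lam]_e\in\ker d^{e,v}$ with $r_\lam\in R[G]$, applying $d^{e,v}$ and invoking the shift formula yields $\big(\sum_{\lam\in B_e\backslash S_v}(-1)^{v_\lam}r_\lam[w(v_\lam\cdt\lam-\theta)]\big)\cdt[\theta]_v=0$; since $[\theta]_v$ freely generates $R_k[Z_v]$, the bracketed $R[G]$-coefficient vanishes. The summand indexed by $\theta$ contributes $r_\theta[w\cdt 0]=r_\theta$, so I can solve $r_\theta=\sum_{\lam\in B_e\backslash(S_v\cup\{\theta\})}(-1)^{v_\lam+1}r_\lam[w(v_\lam\cdt\lam-\theta)]$. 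Substituting this expression for $r_\theta[\theta]_e$ back into $r$ and using $B_v=\{\theta\}$ regroups $r$ as a combination of the elements $[\lam]_e$ for $\lam\in S_v$ and $[\lam]_e+(-1)^{v_\lam+1}[w(v_\lam\cdt\lam-\theta)]\cdt[\theta]_e$ for $\lam\in B_e\backslash(B_v\cup S_v)$, which is exactly the claimed generating set.
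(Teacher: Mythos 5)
Your proof follows the same route as the paper's: verify the listed elements lie in the kernel, then use freeness of $R_k[Z_v]$ over $R[G]$ on the generator $[\theta]_v$ to extract the vanishing of the coefficient of $[\theta]_v$ and thereby eliminate $r_\theta$. The only difference is that you isolate and sketch a justification for the shift formula $[\mu]_v = [w(\mu-\theta)]\cdot[\theta]_v$, which the paper invokes implicitly in both halves of its argument; that is a reasonable elaboration of the same approach rather than a different one.
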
 

\begin{proof}
First check that for $\lam \in B_e \backslash (B_v \cup S_v)$, we have
$$d^{e,v}\left([\lam]_e + (-1)^{v_\lam + 1} [w (v_\lam \cdt \lam - \theta)] \cdt [\theta]_e\right)
= (-1)^{v_\lam} [v_\lam \cdt \lam]_v + (-1)^{v_\lam + 1} [w (v_\lam \cdt \lam - \theta)] \cdt [\theta]_v
= 0$$
Second given $r \in \ker d^{e,v}$, write
\begin{equation} \nn
r = \Bigg( \sum_{\lam \in S_v} r_{\lam} [\lam]_e \Bigg) + r_{\theta} [\theta]_e + \Bigg( \sum_{\lam \in B_e \backslash (B_v \cup S_v)} r_{\lam} [\lam]_e \Bigg)
\end{equation}
Inducing we have
\begin{equation} \nn
0 = r_{\theta} [\theta]_v + \sum_{\lam \in B_e \backslash (B_v \cup S_v)} (-1)^{v_\lam} r_{\lam} [v_\lam \cdt \lam]_v
= \Bigg( r_{\theta} + \sum_{\lam \in B_e \backslash (B_v \cup S_v)} (-1)^{v_\lam} r_{\lam} [w (v_\lam \cdt \lam - \theta)] \Bigg) [\theta]_v
\end{equation}
The coefficient of $[\theta]_v$ in this last expression is therefore $0$, and we conclude that
\begin{equation} \nn
r = \Bigg( \sum_{\lam \in S_v} r_{\lam} [\lam]_e \Bigg) + \Bigg( \sum_{\lam \in B_e \backslash (B_v \cup S_v)} r_\lam \left([\lam]_e + (-1)^{v_\lam + 1} [w (v_\lam \cdt \lam - \theta)] \cdt [\theta]_e\right) \Bigg)
\end{equation}
as desired.
\end{proof}

\subsection{Kernels of orthogonal induction} \label{sec-orthog}

For every simple, simply connected Lie group except $SU(n)$, $n > 2$, there is exactly one edge of the Weyl alcove perpendicular to the affine wall of the alcove.  The kernel of induction from that edge to the corresponding affine vertex of the alcove has a concrete description in terms of a weight basis for the representation ring $R[Z_e]$ of the centralizer of that edge.  We begin, though, with an intermediate description of this fusion kernel in terms of a generating set for a quotient of $R[Z_e]$ by irreducible representations associated to weights nonsingular for affine induction.

\begin{lemma} \label{lemma-orthsing}
Assume that the edge $e$ of the Weyl alcove is perpendicular to the affine wall, and has affine vertex $v$.  Let $B_e$ be a collection of weights in the chamber $C_e$ such that $\{[\lam]_e \st \lam \in B_e\}$ is an $R[G]$-module basis for $R[Z_e]$.  Let $S$ denote the collection of all weights in $C_e$ of level $k+1$, and let $R[S] \subset R[Z_e]$ denote the $\ZZ$-module span of $\{[\lam]_e \st \lam \in S\}$.  The module $R[S]$ is also the quotient $q$ of $R[Z_e]$ by the $\ZZ$-submodule generated by $\{[\lam]_e \st \lam \in C_e \backslash S\}$.  For $\epsilon \in R[G]$ and $\eta \in R[S]$, the operation $\epsilon \cdot \eta := q(\epsilon \eta) \in R[S]$ does not define an $R[G]$-action, but it does define an $R[G]$-action on the quotient $R[S]/(2)$.  Suppose $B_s \subset S$ is a collection of weights such that the quotients $\{[\lam]_e/(2) \st \lam \in B_s\}$ generate $R[S]/(2)$ as an $R[G]$-module.  Then the fusion kernel is
\begin{equation} \nn
\ker d^{e,v} = \big\langle \big\{[\lam]_e \st \lam \in B_s\big\} \cup \big\{[\lam]_e \st \lam \in B_e \cap S\big\} \cup \big\{[\lam]_e + [w_{(k+1) \cdot \alpha_0} \lam]_e \st \lam \in B_e \backslash (B_e \cap S)\big\} \big\rangle
\end{equation}
Here $w_{(k+1) \cdot \alpha_0}$ is reflection in the affine wall of the level $k+1$ alcove.
\end{lemma}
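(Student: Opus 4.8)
The plan is to convert the orthogonality hypothesis into an explicit formula for the induction $d^{e,v}$, treat $\ker d^{e,v}$ as a $\ZZ$-module with a transparent spanning set, and then show each spanning element lies in the proposed ideal $K$ (generated by the singular classes $[\lam]_e$ for $\lam\in B_s$, the singular basis classes $[\lam]_e$ for $\lam\in B_e\cap S$, and the reflection pairs $[\lam]_e+[w_{(k+1)\cdt\alpha_0}\lam]_e$ for the remaining basis weights) by feeding a single Racah--Speiser computation into an induction on level.

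\textbf{Setup.} Because $e$ is perpendicular to the affine wall, the roots of $Z_e$ lie in that wall and hence are orthogonal to the highest root $\theta$; consequently $W^v=\langle w_{k\cdt\alpha_0},w_1,\ldots,\widehat{w_v},\ldots,w_n\rangle$ is the direct product $W^e\times\langle w_{k\cdt\alpha_0}\rangle$, and the $\rho_v$-shift carries the level-$k$ wall to the level-$(k+1)$ wall. Abbreviating $w:=w_{(k+1)\cdt\alpha_0}$, this says that $w$ preserves $C_e$, commutes with the folding action of $W^e$, and fixes exactly the level-$(k+1)$ weights $S$. The map $d^{e,v}$ is therefore determined on the $\ZZ$-basis $\{[\mu]_e\st\mu\in C_e\}$ of all $Z_e$-irreducibles by $[\mu]_e\mapsto[\mu]_v$ when $\mu$ is below the wall, $[\mu]_e\mapsto 0$ when $\mu\in S$, and $[\mu]_e\mapsto -[w\mu]_v$ when $\mu$ is above the wall. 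From this, $K\subseteq\ker d^{e,v}$ is immediate, and $\ker d^{e,v}$ is spanned as a $\ZZ$-module by the singular irreducibles $\{[\mu]_e\st\mu\in S\}$ together with the \emph{reflection pairs} $[\mu]_e+[w\mu]_e$ for $\mu\in C_e\backslash S$. It thus suffices to show that every reflection pair and every element of $R[S]$ lies in $K$.

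\textbf{The engine.} The central computation is the product of a basis reflection pair $(c)_\lam:=[\lam]_e+[w\lam]_e$ by a class $\epsilon\in R[G]$. Since the weights of $\epsilon$ are $W$-stable and the homogeneous part of $w$ is the reflection $\sigma$ in $\theta$, for every weight $\omega$ of $\epsilon$ one has
\[ w(\lam+\omega)=w\lam+\sigma\omega, \]
so the Racah--Speiser weights of $\epsilon\otimes[w\lam]_e$ are the $w$-images of those of $\epsilon\otimes[\lam]_e$; because $w$ commutes with the $W^e$-folding, the two decompositions are $w$-conjugate with matching signs. Writing $\epsilon[\lam]_e=\sum_\mu c_\mu[\mu]_e$ we get $\epsilon[w\lam]_e=\sum_\mu c_\mu[w\mu]_e$, whence
\[ \epsilon\cdt(c)_\lam=\sum_{\{\mu,w\mu\}}(c_\mu+c_{w\mu})\big([\mu]_e+[w\mu]_e\big)+\sum_{\mu\in S}2c_\mu\,[\mu]_e. \]
Multiplying a reflection pair by any $\epsilon$ thus yields an integer combination of reflection pairs plus an element of $2R[S]$; the factor of $2$ on the singular locus is precisely the failure of $q(\epsilon\,\cdot\,)$ to be $R[G]$-linear over $\ZZ$, exactly as anticipated in the statement.

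\textbf{Assembly.} I would run a simultaneous induction on level to conclude from this identity that (I) every reflection pair lies in $K$ and (II) $2R[S]\subseteq K$. Starting from the basis pairs $(c)_\lam\in K$ and taking $\epsilon$ to be fundamental representations, the identity expresses the top reflection pair (resp. top singular weight) of $\epsilon\cdt(c)_\lam$ in terms of strictly lower terms; isolating it needs both the lower pairs and the accompanying $2R[S]$-term to be in $K$, which forces (I) and (II) to be proved together. With (I) and (II) established, the odd part of $R[S]$ is recovered through $B_s$: for $\mu\in S$ the hypothesis gives $[\mu]_e\equiv\sum_{\lam\in B_s}q(\epsilon_\lam[\lam]_e)\pmod{2R[S]}$, and each $q(\epsilon_\lam[\lam]_e)=\epsilon_\lam[\lam]_e-(\text{its nonsingular part})$ lies in $K$, since $\epsilon_\lam[\lam]_e\in K$ and its nonsingular part is a combination of reflection pairs, already in $K$ by (I). Hence $[\mu]_e\in K+2R[S]=K$, so $R[S]\subseteq K$ and $\ker d^{e,v}\subseteq K$. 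The main obstacle is the engine together with this induction: one must check that the reflection symmetry survives the $W^e$-folding \emph{with the correct signs}, so that the nonsingular part of $\epsilon\cdt(c)_\lam$ genuinely assembles into pairs and the singular part is genuinely even, and one must organize the induction so the newly produced top terms separate cleanly from strictly lower ones—this bookkeeping, and the verification that $B_s$ suffices to supply the odd part of $R[S]$, is where the real work lies.
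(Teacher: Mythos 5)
Your setup is right and agrees with the paper: the explicit formula for $d^{e,v}$ on irreducibles ($[\mu]_e\mapsto[\mu]_v$ below the wall, $0$ on $S$, $-[w\mu]_v$ above the wall, with $w:=w_{(k+1)\cdot\alpha_0}$), the resulting $\ZZ$-basis of the kernel consisting of singular irreducibles and reflection pairs, and the "engine" — that multiplication by $\epsilon\in R[G]$ commutes with $w$ because the weights of $\epsilon$ are stable under the linear part of $w$ and $w$ normalizes the $W^e$-folding, so $\epsilon[\lam]_e=\sum c_\mu[\mu]_e$ implies $\epsilon[w\lam]_e=\sum c_\mu[w\mu]_e$. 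This equivariance is exactly the fact the paper uses (it is the sentence "because the representations $r_\lam$ are $w_{0\cdot\alpha_0}$-symmetric").

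The gap is in the assembly. Your simultaneous induction on level is not carried out, and as described it does not obviously close: to extract an arbitrary reflection pair $[\mu]_e+[w\mu]_e$ as the "top term" of some $\epsilon\cdot\bigl([\lam]_e+[w\lam]_e\bigr)$ with $\lam\in B_e$, you need a triangularity statement — that $\mu$ occurs as a leading term with unit coefficient and that the remaining terms are "strictly lower" in a well-founded order. The lemma assumes only that $B_e$ is \emph{some} $R[G]$-module basis, so no such leading-term structure is available, and none is needed. The spanning hypothesis already does all the work directly: write $[\mu]_e=\sum_{\lam\in B_e}r_\lam[\lam]_e$ with $r_\lam\in R[G]$, apply your engine to get $[w\mu]_e=\sum_{\lam\in B_e\cap S}r_\lam[\lam]_e+\sum_{\lam\in B_e\bs S}r_\lam[w\lam]_e$, and add; the result is $\sum_{\lam\in B_e\cap S}2r_\lam[\lam]_e+\sum_{\lam\in B_e\bs S}r_\lam\bigl([\lam]_e+[w\lam]_e\bigr)$, visibly in $K$. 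Taking $\mu\in S$ (so $w\mu=\mu$) the same identity gives $2[\mu]_e\in K$, which is your claim (II), with no induction at all. After that, your treatment of the odd part of $R[S]$ via $B_s$ is correct and matches the paper. So the missing idea is simply to expand in the hypothesized basis $B_e$ rather than to try to regenerate the kernel from below by multiplying basis pairs by fundamental representations.
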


\begin{proof}
The listed elements are certainly in the kernel of $d^{e,v}$.  For a weight $\lam \in C_v$, the induction is $d^{e,v}([\lam]_e) = [\lam]_v$; for singular weights, the induction is zero; for a weight $\lam \in C_e \backslash (C_v \cup S)$, the induction is $d^{e,v}([\lam]_e) = - [w_{(k+1) \cdot \alpha_0} \lam]_v$.  The collection 
$$\big\{[\lam]_e \st \lam \in S\big\} \cup \big\{[\lam]_e + [w_{(k+1) \cdot \alpha_0} \lam]_e \st \lam \in C_e \backslash (C_v \cup S)\big\}$$ 
is therefore a $\ZZ$-basis for the kernel of $d^{e,v}$.  It suffices to check that each of these basis elements is in the $R[G]$-span of the collection 
$$\big\{[\lam]_e \st \lam \in B_s\big\} \cup \big\{[\lam]_e \st \lam \in S_v\big\} \cup \big\{[\lam]_e + [w_{(k+1) \cdot \alpha_0} \lam]_e \st \lam \in B_e \backslash S_v\big\}$$  
Here we have written $S_v$ for the $v$-singular basis weights $B_e \cap S$.

For $\mu \in C_e \backslash (C_v \cup S)$, write
\begin{equation} \nn
[\mu]_e = \Bigg( \sum_{\lam \in S_v} r_{\lam} [\lam]_e \Bigg) + \Bigg( \sum_{\lam \in B_e \backslash S_v} r_{\lam} [\lam]_e \Bigg)
\end{equation}
Next observe that
\begin{equation} \nn
\begin{split}
[\mu]_e + [w_{(k+1) \cdot \alpha_0} \mu]_e 
&= \Bigg( \sum_{\lam \in S_v} r_{\lam} [\lam]_e \Bigg) + \Bigg( \sum_{\lam \in B_e \backslash S_v} r_{\lam} [\lam]_e \Bigg) + \Bigg( \sum_{\lam \in S_v} r_{\lam} [\lam]_e \Bigg) + \Bigg( \sum_{\lam \in B_e \backslash S_v} r_{\lam} [w_{(k+1) \cdot \alpha_0} \lam]_e \Bigg) \\
&= \Bigg( \sum_{\lam \in S_v} 2 r_{\lam} [\lam]_e \Bigg) + \Bigg( \sum_{\lam \in B_e \backslash S_v} r_{\lam} \left([\lam]_e + [w_{(k+1) \cdot \alpha_0} \lam]_e \right) \Bigg)
\end{split}
\end{equation}
as desired.

For $\mu \in S$, by assumption there exist representations $r_\lam \in R[G]$ such that $[\mu]_e - \sum_{\lam \in B_s} r_\lam [\lam]_e = 0$ in $R[S]/(2)$.  Thus, there exist integers $t_\lam \in \ZZ$ such that 
$$[\mu]_e - \sum_{\lam \in B_s} r_\lam [\lam]_e - \sum_{\lam \in S} 2 t_\lam [\lam]_e = 0$$ 
in $R[S]$.  Because the representations $r_\lam \in R[G]$ are $w_{0 \cdot \alpha_0}$-symmetric, there furthermore exist integers $s_\lam \in \ZZ$ such that in $R[Z_e]$ we have
\begin{equation} \nn
[\mu]_e - \sum_{\lam \in B_s} r_\lam [\lam]_e - \sum_{\lam \in C_e \backslash (C_v \cup S)} s_\lam \left([\lam]_e + [w_{(k+1) \cdot \alpha_0} \lam]_e\right) - \sum_{\lam \in S} 2 t_\lam [\lam]_e = 0
\end{equation}
The first sum is in the required $R[G]$-span.  By the previous paragraph, the second sum is also in that span.  For the third sum, for each $\lam \in S$ write 
$$[\lam]_e = \sum_{\nu \in S_v} r_\nu [\nu]_e + \sum_{\nu \in B_e \backslash S_v} r_\nu [\nu]_e$$ with $r_\nu \in R[G]$, from which we have 
$$2 [\lam]_e = [\lam]_e + [w_{(k+1) \cdot \alpha_0} \lam]_e = \sum_{\nu \in S_v} 2 r_\nu [\nu]_e + \sum_{\nu \in B_e \backslash S_v} r_\nu \left([\nu]_e + [w_{(k+1) \cdot \alpha_0} \nu]_e\right)$$
as needed.
\end{proof}

Next we observe that the basis $B_s$ for the singular weight module $R[S]/(2)$, as in the above lemma, is determined by the basis $B_e$ for $R[Z_e]$.  To simplify the statement, we assume the group $G$ is not $SU(2)$ or $Sp(n)$---this restriction ensures that the fundamental weight on the edge $e$ has level $2$.

\begin{lemma} \label{lemma-bs}
Suppose the edge $e$ of the Weyl alcove is perpendicular to the affine wall, and has affine vertex $v$ corresponding to the simple root $\alpha_i$.  Assume the group $G$ is not $SU(2)$ or $Sp(n)$.  Let $B_e$ be a collection of weights in the chamber $C_e$ such that $\{[\lam]_e \st \lam \in B_e\}$ is an $R[G]$-module basis for $R[Z_e]$.  Then for $B_s$ as follows, the collection $\{[\lam]_e/(2) \st \lam \in B_s\}$ generates, over $R[G]$, the modulo 2 singular weight module $R[S]/(2)$:
\begin{equation} \nn
B_s = \Big\{ \lam + \frac{k+1-\Lv(\lam)}{2} \lam_i \st \lam \in B_e, \Lv(\lam) = k+1 \: (\mathrm{mod}\; 2)\Big\}
\end{equation}
Here $\Lv(\lam)$ is the level of the weight $\lam$.
\end{lemma}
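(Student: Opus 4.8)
The plan is to combine the freeness of $R[Z_e]$ over $R[G]$ with a level-triangularity argument, using the perpendicularity hypothesis to control how the affine level behaves under multiplication and folding.

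First I would record the geometric consequences of perpendicularity. Since the edge $e$ is orthogonal to the affine wall, the weight $\lambda_i$ spans the line normal to that wall, so the level functional $\Lv$ is a positive multiple of the coordinate dual to $\lambda_i$; by hypothesis $\Lv(\lambda_i) = 2$. Because $W^e$ fixes the edge pointwise it fixes the direction $\lambda_i$, and hence preserves $\Lv$; thus folding a weight into the chamber $C_e$ never changes its level. Moreover the $\lambda_i$-coordinate is unconstrained by $C_e$-dominance, since the simple coroots of $Z_e$ are the $\alpha_j^\vee$ with $j \neq 0, i$, all of which pair trivially with $\lambda_i$; so sliding any $\lambda \in B_e$ along $\lambda_i$ keeps it in $C_e$. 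Consequently, for $\lambda \in B_e$ with $\Lv(\lambda) \equiv k+1 \pmod 2$ the integer $m_\lambda = \tfrac{k+1-\Lv(\lambda)}{2}$ is well defined and the weight $\mu_\lambda := \lambda + m_\lambda\lambda_i$ lies in $S = C_e \cap \{\Lv = k+1\}$; these $\mu_\lambda$ are exactly the elements of $B_s$. I would also recall from Lemma~\ref{lemma-orthsing} that $\epsilon\cdot\eta = q(\epsilon\eta)$ is a genuine $R[G]$-action on $R[S]/(2)$, the failure of an integral action being exactly the doubled terms $[\lambda]_e + [w_{(k+1)\cdot\alpha_0}\lambda]_e$ that vanish modulo $2$.

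Next I would exploit freeness to produce a spanning set. Since $\{[\lambda]_e : \lambda \in B_e\}$ is an $R[G]$-basis of $R[Z_e]$, every $[\nu]_e$ with $\nu \in S$ expands as $\sum_{\lambda\in B_e} r_\lambda[\lambda]_e$ with $r_\lambda \in R[G]$, and applying $q$ gives $R[S] = \sum_{\lambda\in B_e} q\big(R[G]\cdot[\lambda]_e\big)$. It therefore suffices to show that each slice $q\big(R[G]\cdot[\lambda]_e\big)$ lies, modulo $2$, in the modified $R[G]$-span of $B_s$. Here the perpendicularity pays off: since folding preserves level, the level-$(k+1)$ part of $\epsilon[\lambda]_e$ sees only the level-$(k+1-\Lv(\lambda))$ weights of $\epsilon$, and the leading contribution, coming from the level-$2m_\lambda$ part of $\epsilon$ against the extreme weight $m_\lambda\lambda_i$, is precisely a multiple of $[\mu_\lambda]_e$. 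I would organize this as a triangularity statement: ordering $B_e$ by level, $q(\epsilon[\lambda]_e) \equiv (\text{explicit }R[G]\text{-multiple of }[\mu_\lambda]_e) + (\text{slices }q(\epsilon'[\lambda']_e)\text{ with }\Lv(\lambda') > \Lv(\lambda)) \pmod 2$, so that a descending induction on $\Lv(\lambda)$ peels off one seed at a time.

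The main obstacle is the bookkeeping of the lower-order folding terms together with the parity mismatch. A basis weight $\lambda$ of the wrong parity (with $\Lv(\lambda)\not\equiv k+1$) still contributes to $R[S]$ through odd-level weights of $\epsilon$, yet $B_s$ contains no seed attached to such $\lambda$; the content of the lemma is that these wrong-parity contributions already lie in the modulo-$2$ span of the correct-parity seeds. I expect to resolve this exactly as in the proof of Lemma~\ref{lemma-orthsing}: the reflection $w_{(k+1)\cdot\alpha_0}$ in the affine wall identifies the level-$(k+1+t)$ and level-$(k+1-t)$ data, and modulo $2$ the symmetrized combinations $[\mu]_e + [w_{(k+1)\cdot\alpha_0}\mu]_e$ collapse, bridging the two parities and eliminating the need for wrong-parity seeds. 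Carrying out the triangular induction while tracking these mod-$2$ cancellations is the one genuinely technical step; the geometric inputs from the first paragraph are what keep the level arithmetic linear and hence make the induction terminate.
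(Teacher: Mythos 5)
Your geometric setup in the first paragraph is correct and matches what the paper uses implicitly, and the reduction to showing that each slice $q(R[G]\cdot[\lam]_e)$ lies in the modulo-$2$ span of $B_s$ is a legitimate starting point. But the core of your argument---the triangularity statement and the descending induction on $\Lv(\lam)$---is both unproven and, as formulated, not viable. Concretely, $q(\epsilon[\lam]_e)$ is the sum $\sum_{\nu} m_\nu[\lam+\nu]_e$ over \emph{all} weights $\nu$ of $\epsilon$ of level exactly $k+1-\Lv(\lam)$, each folded into $C_e$; there is no reason this is an $R[G]$-multiple of $[\mu_\lam]_e$ plus controlled errors (a general $\epsilon$ need not even have $m_\lam\lam_i$ among its weights), and when you re-expand the resulting level-$(k+1)$ irreducibles in the basis $B_e$ the coefficients attach to basis weights $\lam'$ of arbitrary level, so the claimed error terms ``with $\Lv(\lam')>\Lv(\lam)$'' are not justified and the induction is not well-founded. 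Moreover the wrong-parity slices are genuinely nonzero modulo $2$: the weights of $\epsilon$ at the single level $k+1-\Lv(\lam)$ do not pair off with anything at that same level, so the $w_{(k+1)\cdot\alpha_0}$-collapse you invoke from Lemma~\ref{lemma-orthsing} does not dispose of them.

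The idea you are missing is that one should work not with $q$ (the naive level-$(k+1)$ part) but with the full composite $\Phi: R[Z_e]\ra R[S]/(2)$ that first kills every $[\lam]_e$ with $\Lv(\lam)\neq k+1\ (\mathrm{mod}\ 2)$ and then translates each surviving $[\lam]_e$ by $\frac{k+1-\Lv(\lam)}{2}\lam_i$ onto the singular plane. The paper's proof is exactly this: $\Phi$ is surjective, and it is an $R[G]$-module map modulo $2$---because $W^e$-folding preserves the level, the weights of any $\epsilon\in R[G]$ pair off as $\nu\leftrightarrow w_{0\cdot\alpha_0}\nu$ with opposite levels, and both members of such a pair project to the same point of the singular plane, so all cross-level contributions appear twice and cancel modulo $2$, leaving exactly the action defined in Lemma~\ref{lemma-orthsing}. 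A surjective module map carries the basis $B_e$ to a generating set, and $\Phi(B_e)$ is precisely $B_s$ together with zeros. No induction, triangularity, or level ordering is needed; the mod-$2$ pairing you correctly sensed was relevant enters once, to establish the module-map property, rather than at every stage of an induction.
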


\begin{proof}
Consider the series of maps
\begin{equation} \nn
R[Z_e] \ra R[Z_e]_{S\text{-cong}} \ra R[S] \ra R[S]/(2)
\end{equation}
Here $R[Z_e]_{S\text{-cong}}$ is the quotient of $R[Z_e]$ by the $\ZZ$-submodule generated by $\{[\lam]_e \st \, \Lv(\lam) \neq \linebreak k+1 \: (\mathrm{mod}\; 2)\}$, that is the quotient by all representations whose level is not congruent to the singular level.  The map $R[Z_e]_{S\text{-cong}} \ra R[S]$ takes a representation $[\lam]_e$, for $\Lv(\lam) = k+1 \: (\mathrm{mod}\; 2)$, to the representation $[\lam + \frac{k+1-\Lv(\lam)}{2} \lam_i]_e$.  Though neither $R[Z_e]_{S\text{-cong}}$ nor $R[S]$ is an $R[G]$-module, the composite $R[Z_e] \ra R[S]/(2)$ is a surjective $R[G]$-module map.  The representations $\{[\lam]_e \st \lam \in B_e\}$ form an $R[G]$-basis for $R[Z_e]$; this collection of representations maps to the collection 
$$\Big\{[\lam + \frac{k+1-\Lv(\lam)}{2} \lam_i]_e \in R[S]/(2) \st \lam \in B_e, \Lv(\lam) = k+1 \: (\mathrm{mod}\; 2)\Big\}$$
which therefore generates the quotient module $R[S]/(2)$.
\end{proof} 

\nid The cases $SU(2)$ and $Sp(n)$ present no difficulty---one need only replace the factor $\lam_i$ by $2 \lam_i$ throughout.  

Altogether, then, the basis $B_e$ determines the fusion kernel:

\begin{cor} \label{cor-orthog}
If the edge $e$ of the Weyl alcove, with affine vertex $v$ corresponding to the simple root $\alpha_i$, is perpendicular to the affine wall, and the group $G$ is not $SU(2)$ or $Sp(n)$, then the fusion kernel is given by
\begin{equation} \nn
\begin{split}
\ker d^{e,v} = \big\langle & \big\{ [\lam + \frac{k+1-\Lv(\lam)}{2} \lam_i]_e \st \lam \in B_e, \Lv(\lam) = k+1 \: (\mathrm{mod}\; 2) \big\} \\
& \cup \big\{[\lam]_e \st \lam \in B_e \cap S\big\} \\
& \cup \big\{[\lam]_e + [w_{(k+1) \cdot \alpha_0} \lam]_e \st \lam \in B_e \backslash (B_e \cap S)\big\} \big\rangle
\end{split}
\end{equation}
As before $B_e$ is a collection of weights in the chamber $C_e$ whose corresponding irreducible representations form an $R[G]$-basis for $R[Z_e]$, and $S$ is the set of level $k+1$ weights of $C_e$.
\end{cor}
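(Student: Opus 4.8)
The plan is to obtain the corollary as an immediate consequence of Lemma~\ref{lemma-orthsing} and Lemma~\ref{lemma-bs}, which together supply all of its content. First I would confirm that the hypotheses match up. The corollary posits that the edge $e$ is perpendicular to the affine wall with affine vertex $v$ corresponding to $\alpha_i$, that $G$ is neither $SU(2)$ nor $Sp(n)$, and that $B_e$ is an $R[G]$-module basis for $R[Z_e]$; these are exactly the standing hypotheses of Lemma~\ref{lemma-bs}, while Lemma~\ref{lemma-orthsing} requires only the perpendicularity and the basis $B_e$. So both lemmas apply verbatim.

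Next I would invoke Lemma~\ref{lemma-bs} to produce the explicit collection
\[B_s = \Big\{\lam + \tfrac{k+1-\Lv(\lam)}{2}\lam_i \st \lam \in B_e,\ \Lv(\lam) = k+1\ (\mathrm{mod}\ 2)\Big\},\]
whose image generates the modulo-2 singular weight module $R[S]/(2)$ over $R[G]$. I would also record the one-line check that each such weight genuinely lies in $S$: since $G$ is not $SU(2)$ or $Sp(n)$, the fundamental weight $\lam_i$ on the edge has level $2$, so $\Lv(\lam + \tfrac{k+1-\Lv(\lam)}{2}\lam_i) = \Lv(\lam) + (k+1-\Lv(\lam)) = k+1$, confirming $B_s \subset S$ as required for the input to Lemma~\ref{lemma-orthsing}.

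Finally I would feed this particular $B_s$ into the conclusion of Lemma~\ref{lemma-orthsing}. That lemma asserts, for \emph{any} generating set $B_s$ of $R[S]/(2)$, that $\ker d^{e,v}$ is generated by $\{[\lam]_e \st \lam \in B_s\}$ together with $\{[\lam]_e \st \lam \in B_e \cap S\}$ and $\{[\lam]_e + [w_{(k+1) \cdt \alpha_0}\lam]_e \st \lam \in B_e \bs (B_e \cap S)\}$. Substituting the explicit $B_s$ above turns the first collection into $\{[\lam + \tfrac{k+1-\Lv(\lam)}{2}\lam_i]_e \st \lam \in B_e,\ \Lv(\lam) = k+1\ (\mathrm{mod}\ 2)\}$, while the remaining two collections are carried over unchanged, yielding precisely the stated formula for the fusion kernel.

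I do not expect a genuine obstacle here: the corollary is purely a splicing of the two preceding lemmas, and the only verification beyond citing them is the level computation ensuring $B_s \subset S$ and the observation that the generating hypothesis of Lemma~\ref{lemma-orthsing} is met by the explicit set furnished by Lemma~\ref{lemma-bs}. The real work has already been discharged in establishing those two lemmas—the perpendicularity of $e$ that produces the clean induction formula $d^{e,v}([\lam]_e) = -[w_{(k+1)\cdt\alpha_0}\lam]_v$ for $\lam \in C_e \bs (C_v \cup S)$, and the surjectivity of the composite $R[Z_e] \ra R[S]/(2)$—so the corollary itself is essentially bookkeeping.
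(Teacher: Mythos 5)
Your proposal is correct and matches the paper exactly: the corollary is stated there with no separate proof precisely because it is the immediate splice of Lemma~\ref{lemma-bs} (supplying the explicit $B_s$) into Lemma~\ref{lemma-orthsing} (which accepts any generating set for $R[S]/(2)$). Your extra one-line check that $B_s \subset S$ via the level computation is a sensible, harmless addition.
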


\section{Computation of the fusion ideals} \label{sec-comp}

Section~\ref{sec-affcent} showed that the fusion ideal is the image $d^{e,o}(\ker d^{e,v})$.  Here, $d^{e,o}: R[Z_e] \ra R[G]$ is induction from the centralizer of an edge of the Weyl alcove, and $d^{e,v}: R[Z_e] \ra R_k[Z_v]$ is twisted induction from that edge to the twisted representation module of the corresponding vertex of the alcove.  Section~\ref{sec-kernel} computed the kernel $\ker d^{e,v}$ in terms of appropriate affine Steinberg bases for the centralizers $Z_e$.  In this section, we describe the necessary Steinberg bases and compute the induction images of the fusion kernels, for all simple, simply connected groups.

We begin by recalling a convenient form of Steinberg's original construction of bases for representation modules:
\begin{prop}[\cite{steinberg}]
Suppose $E \subset F$ are two full rank subgroups of the simple, simply connected group $G$.  Let $R_E \subset R_F \subset R$ be the root systems for the three groups, with Weyl groups $W_E$, $W_F$, and $W$.  Denote by $R[E] = \ZZ[\Lambda_W]^{W_E}$, $R[F] = \ZZ[\Lambda_W]^{W_F}$, and $R[G] = \ZZ[\Lambda_W]^{W}$ the three representation rings.  Choose a set of positive roots $R^+$, and let $D(R^+)$ be the simple roots of $R^+$.  Let $R_E^+ = R^+ \cap R_E$ and $R_F^+ = R^+ \cap R_F$ be the compatible choices of positive roots for the subgroups.  Denote by $C_E$, $C_F$, and $C$ the corresponding Weyl chambers.  Let
\[W^{E_+} := \{w \in W \st w(R_E^+) > 0\}\]
be the subgroup of the Weyl group of $G$ keeping the positive roots of $E$ positive, that is in $R^+.$  For $w \in W^{E_+}$, set
\[\lam_w = \sum_{\{\alpha \in D(R^+) \st w^{-1} \alpha < 0\}} \lam_\alpha\]
The irreducible representations corresponding to the collection of weights
\[B_E := \{w^{-1} \lam_w \st w \in W^{E_+} \}\]
form an $R[G]$-module basis for the representation ring $R[E]$.  Furthermore, the irreducible representations corresponding to the weights $B_F := B_E \cap C_F$ form an $R[G]$-module basis for the representation ring $R[F]$.
\end{prop}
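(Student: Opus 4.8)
The statement is Steinberg's theorem, and the plan is to prove the main assertion—that $\{[w^{-1}\lam_w]_E : w\in W^{E_+}\}$ is an $R[G]$-module basis of $R[E]$—and then to obtain the claim for $R[F]$ as a special case. I would organize the argument in two stages: first that $R[E]$ is a \emph{free} $R[G]$-module of the correct rank, and second that the particular elements $[w^{-1}\lam_w]_E$ form an integral basis. For the freeness, I would invoke Pittie's theorem that the representation ring of a full-rank subgroup is a free module over $R[G]$, so that $R[E]$ is free over $R[G]$; its rank is then pinned down by multiplicativity of ranks in the tower $R[G]\subset R[E]\subset \ZZ[\Lambda_W]$, using the Pittie--Steinberg freeness of $\ZZ[\Lambda_W]$ over $R[G]$ (rank $|W|$) and over $R[E]$ (rank $|W_E|$): the rank of $R[E]$ over $R[G]$ is $|W|/|W_E|$. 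That this equals $|W^{E_+}|$ is the combinatorial fact that $W^{E_+}$ meets each coset of $W_E$ in $W$ exactly once—each coset contains a unique element carrying $R_E^+$ into $R^+$—so $W^{E_+}$ is a transversal.

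For the second stage, since $R[E]$ is free over $R[G]$ of rank $N=|W^{E_+}|$ and $R[G]$ is a polynomial ring whose only units are $\pm 1$, it suffices to show that the $N\times N$ matrix expressing the $[w^{-1}\lam_w]_E$ in terms of any reference $R[G]$-basis has determinant $\pm 1$. The geometric incarnation of this determinant is the evaluation of characters at a regular element $t$ of the maximal torus, where the fiber of $\mathrm{Spec}\,R[E]\to\mathrm{Spec}\,R[G]$ over $t$ is the $[W:W_E]$-point set $W_E\backslash(W\cdt t)$; more in the spirit of the highest-weight bases used elsewhere in the paper, I would instead exhibit a determinant-one change of basis between the products $\{[w^{-1}\lam_w]_E\cdt[\lam] : w\in W^{E_+},\ \lam\in C\cap\Lambda_W\}$ and the standard $\ZZ$-basis $\{[\mu]_E : \mu\in C_E\cap\Lambda_W\}$ of $R[E]$. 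The mechanism is an orbit filtration: grouping weights into $W$-orbits matches the two index sets and organizes the transition matrix into blocks indexed by the regular orbits meeting $C_E$, and within each such block the matrix is a Weyl-denominator/Vandermonde determinant equal to $\pm$ a monomial. The role of the weight $\lam_w=\sum_{w^{-1}\alpha<0}\lam_\alpha$ is precisely to select, within each orbit, the extreme vertex that makes these diagonal blocks unimodular and to account correctly for the orbits lying on the walls of $C_E$.

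I expect this determinant computation—the Vandermonde/Weyl-denominator evaluation together with the bookkeeping of the boundary orbits via the shifts $\lam_w$—to be the main obstacle; everything else is formal. Crucially, the computation uses only that $W_E\subset W$ is a reflection subgroup with a compatible choice of positive roots, so Steinberg's original argument for the pair $(T,G)$ transfers verbatim. Finally, for the \emph{furthermore} clause I would apply the established main assertion to the pair $(F,G)$, obtaining that $\{w^{-1}\lam_w : w\in W^{F_+}\}$ is an $R[G]$-basis of $R[F]$, and then verify the identity $B_E\cap C_F=\{w^{-1}\lam_w : w\in W^{F_+}\}$. Since $R_E^+\subset R_F^+$ gives $W^{F_+}\subset W^{E_+}$, and since for $w\in W^{F_+}$ the positivity $w(R_F^+)>0$ together with the dominance of $\lam_w$ places $w^{-1}\lam_w$ in $C_F$, this identity comes down to checking that $w^{-1}\lam_w$ fails to be $F$-dominant precisely when $w\in W^{E_+}\backslash W^{F_+}$, which follows from the same orbit analysis applied to $F$.
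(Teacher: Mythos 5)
The paper itself offers no proof of this proposition: it is imported verbatim from Steinberg's paper, with the bracketed citation serving as the entire justification, so there is no in-paper argument to compare yours against. Judged on its own terms, your outline has the right architecture and its formal parts check out. The counting $\lvert W^{E_+}\rvert = [W:W_E]$ via the unique representative of each coset $wW_E$ carrying $R_E^+$ into $R^+$ is standard (note that $W^{E_+}$ is a transversal, not a subgroup, despite the wording of the statement; you treat it correctly). The reduction of the \emph{furthermore} clause to the main assertion applied to the pair $(F,G)$ is also sound, and your membership criterion is exactly right: for $w \in W^{F_+}$, dominance of $\lam_w$ gives $\langle w^{-1}\lam_w, \beta^{\vee}\rangle = \langle \lam_w, (w\beta)^{\vee}\rangle \geq 0$ for all $\beta \in R_F^+$, while for $w \in W^{E_+}$ with some $\beta \in R_F^+$ sent negative, the positive root $\gamma = -w\beta$ must contain in its support a simple root sent negative by $w^{-1}$, whence $\langle \lam_w, \gamma^{\vee}\rangle > 0$ and $w^{-1}\lam_w \notin C_F$.

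The gap is that the decisive step---unimodularity of the transition matrix, equivalently that the $[w^{-1}\lam_w]_E$ actually span $R[E]$ over $R[G]$---is asserted rather than proved. The phrase ``a Weyl-denominator/Vandermonde determinant equal to $\pm$ a monomial'' names the answer but not the argument: the orbit decomposition you describe does not literally produce a block-triangular matrix, because the product $[w^{-1}\lam_w]_E \cdot [\lam]$ contributes weights from many $W$-orbits dominated by the leading one, so one still needs the full leading-term and induction-on-weights analysis (or Steinberg's own evaluation of $\det\bigl(v(e^{w^{-1}\lam_w})\bigr)$ against the Weyl denominator) to close it. There is also a mild circularity of spirit in taking Pittie's freeness theorem as stage one, since Steinberg's paper exists precisely to give an algebraic proof of that freeness and his argument yields the basis and the freeness simultaneously; your route is non-circular only if one accepts Pittie's independent topological proof. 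In short: your proposal correctly reconstructs the shape of Steinberg's argument and handles the peripheral reductions, but its core is deferred to exactly the computation that the cited reference was written to supply.
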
 
We refer to the collection $W^{E_+}$ as the ``$E$-positive Weyl group", and to its elements as ``Steinberg Weyl elements".  The corresponding weights $\Lambda^{E_+} := \{\lam_w \st w \in W^{E_+}\}$ are called the ``$E$-positive weights".  We say that the collection $B_E$ or the pair $(B_E, B_F)$ is a ``Steinberg basis" for the pair of groups $(E,F)$, with respect to the given choice of positive roots, and the weights of $B_E$ are ``Steinberg weights".  We will also utilize the following shorthand: for any full rank subgroup $H \subset G$ and a collection of simple roots $\Delta$ for $H$, we let $H[\Delta]$ denote that subgroup equipped with the choice of positive roots determined by $\Delta$.

We now overview the construction of affine Steinberg bases for certain centralizers in the simple, simply connected groups.  Fix the group $G$; let $D$ be the affine Dynkin diagram, with nodes labelled in the Bourbaki order, and let $\bar{D}$ be the finite Dynkin diagram.  Choose an affine vertex $v$ of the Weyl alcove, corresponding to a simple root $\alpha_i \in \bar{D}$.  Let $e$ denote the edge connecting this vertex to the origin.  We will encounter exactly two cases: (1) the roots $D \bs \alpha_i$ generate the root system of $G$; (2) the affine root $\alpha_0$ is orthogonal to the root system $\bar{D} \bs \alpha_i$.  In each case, we begin by calculating a Steinberg basis $(B_e, B_v)$ for the pair $(Z_e, Z_v)$ with respect to a particular choice of positive roots, distinguished according to the cases as follows:
\begin{itemize}
\item[(1)] $Z_e[\bar{D} \bs \alpha_i] \subset Z_v[\bar{D}] \subset G[\bar{D}]$
\item[(2)] $Z_e[\bar{D} \bs \alpha_i] \subset Z_v[(\bar{D} \cup -\alpha_0) \bs \alpha_i] \subset G[\bar{D}]$
\end{itemize}
Next, respectively in each of the two cases, we identify a transformation $r$ that reflects these positive root systems for $Z_e$ and $Z_v$ into the following systems:
\begin{itemize}
\item[(1)] $Z_e[\bar{D} \bs \alpha_i] \subset Z_v[D \bs \alpha_i]$
\item[(2)] $Z_e[\bar{D} \bs \alpha_i] \subset Z_v[D \bs \alpha_i]$
\end{itemize}
In case (2) this reflection is always $w_0 := w_{0 \cdot \alpha_0}$.  

Now provided the representation module $R_k[Z_v]$ is untwisted, the affine transformation
\[(AB_e, AB_v) := \left(r(B_e) + k \frac{\lam_i}{h^{\vee}_i}, r(B_v) + k \frac{\lam_i}{h^{\vee}_i}\right)\]
of the Steinberg basis $(B_e, B_v)$ is a level $k$ affine Steinberg basis for the pair of centralizers $(Z_e, Z_v)$.  That module is untwisted exactly when the dual Coxeter label $h^{\vee}_i$ divides the level---see Proposition 4.2 in~\cite{douglas-sfi}.  This will be true in all our cases except for the groups $G_2$, $F_4$, and $E_8$, at odd levels.  For $G_2$ at odd level we use a different affine transformation to produce an affine Steinberg basis; for $F_4$ and $E_8$ we observe that the collection $r(B_e) + k \frac{\lam_i}{h^{\vee}_i}$ remains a Steinberg basis for the centralizer $Z_e$, and this is sufficient to proceed with the computation.  For every group, we will find the Steinberg or affine Steinberg basis $AB_e$ is of the type considered in at least one of the Sections~\ref{sec-indclosed},~\ref{sec-central}, or~\ref{sec-orthog}.  The techniques considered there will serve to compute the fusion kernel, from which we finally determine the fusion ideal.

\subsection{The classical groups and $G_2$} \label{sec-classical}

\subsubsection*{Type A} \label{sec-a}

Let $v$ be the vertex of the alcove corresponding to the simple root $\alpha_1$; this root is a terminal root in the nonaffine Dynkin diagram.  Choose $\bar{D} \bs \alpha_1$ as the positive simple roots of the centralizer $Z_e$.  The $Z_e$-positive Weyl group is
\begin{equation} \nn
W^{{Z_e}_+} = \{1, w_1, w_2 w_1, w_3 w_2 w_1, \ldots, w_n w_{n-1} \cdots w_1\}
\end{equation}
The corresponding collection of $Z_e$-positive weights is
\begin{equation} \nn
\Lambda^{{Z_e}_+} = \{0, \lam_1, \lam_2, \lam_3, \ldots, \lam_n\}
\end{equation}
and the resulting Steinberg basis is
\begin{equation} \nn
B_e = \{0, w_1 \lam_1, w_1 w_2 \lam_2, w_1 w_2 w_3 \lam_3, \ldots, w_1 w_2 \cdots w_n \lam_n\}
\end{equation}
The centralizer $Z_v$ is the group $G$, and therefore $B_v = \{0\}$.

Let $r$ denote the automorphism of $\ft^{\ast}$ determined by the nontrivial order $2$ automorphism of the affine Dynkin diagram that exchanges $\alpha_1$ and $\alpha_0$.  This automorphism preserves the positive root system of $Z_e$ while transforming the positive simple roots $\bar{D}$ of $Z_v$ into the roots $D \bs \alpha_1$.  The pair
\[(AB_e, AB_v) = (r(B_e) + k \lam_1, r(B_v) + k \lam_1)\]
is a level $k$ affine Steinberg basis for $(Z_e, Z_v)$.  

The single weight $AB_v = \{k \lam_1\}$ is level $k$, and the remaining weights $AB_e \bs AB_v$ are all level $\mbox{k+1}$, therefore singular for induction to $R_k[Z_v]$.  In particular, the affine Steinberg basis is induction closed in the sense of Proposition~\ref{prop-indclosed}.  The fusion kernel is therefore $\{[\lam]_e \st \lam \in AB_e \bs AB_v\}$, and by Lemma~\ref{lemma-onevert}, the fusion ideal is $\{[\lam]_v \st \lam \in AB_e \bs AB_v\}$; the specific representations in the fusion ideal are listed in Theorem~\ref{thm-fusion}.

\subsubsection*{Type B} \label{sec-b}

Let $v$ be the vertex of the alcove corresponding to the simple root $\alpha_1$; this root is the terminal long root in the nonaffine Dynkin diagram.  Choose $\bar{D} \bs \alpha_1$ as the positive simple roots of the centralizer $Z_e$.  The $Z_e$-positive Weyl group is
\begin{equation} \nn
\begin{split}
W^{{Z_e}_+} = \{ 
& 1, w_1, w_2 w_1, w_3 w_2 w_1, \ldots, w_n w_{n-1} \cdots w_1,
\\
& w_{n-1} w_n w_{n-1} \cdots w_1, w_{n-2} w_{n-1} w_n w_{n-1} \cdots w_1, \ldots, w_1 w_2 \cdots w_n w_{n-1} \cdots w_1\}
\end{split}
\end{equation}
The corresponding collection of $Z_e$-positive weights is
\begin{equation} \nn
\Lambda^{{Z_e}_+} = \{0, \lam_1, \lam_2, \lam_3, \ldots, \lam_n, \lam_{n-1}, \lam_{n-2}, \ldots, \lam_1\}
\end{equation}
and the resulting Steinberg basis is
\begin{equation} \nn
\begin{split}
B_e = \{
& 0, w_1 \lam_1, w_1 w_2 \lam_2, w_1 w_2 w_3 \lam_3, \ldots, w_1 w_2 \cdots w_n \lam_n, 
\\
& w_1 w_2 \cdots w_n w_{n-1} \lam_{n-1}, \ldots, w_1 w_2 \cdots w_n w_{n-1} \cdots w_1 \lam_1
\}
\end{split}
\end{equation}
The centralizer $Z_v$ is the group $G$, and therefore $B_v = \{0\}$.

Let $r$ denote the automorphism of $\ft^{\ast}$ determined by the nontrivial order $2$ automorphism of the affine Dynkin diagram that exchanges $\alpha_1$ and $\alpha_0$.  This automorphism preserves the positive root system of $Z_e$ while transforming the positive simple roots $\bar{D}$ of $Z_v$ into the roots $D \bs \alpha_1$.  The pair
\[(AB_e, AB_v) = (r(B_e) + k \lam_1, r(B_v) + k \lam_1)\]
is a level $k$ affine Steinberg basis for $(Z_e, Z_v)$.  

The single weight $AB_v = \{k \lam_1\}$ is level $k$.  The remaining weights of $AB_e$ are
\begin{equation} \nn
\begin{split}
AB_e \bs AB_v = \{ 
&
(k\!-\!1)\lam_1 + \lam_2,
(k\!-\!1)\lam_1 + \lam_3,
\ldots,
(k\!-\!1)\lam_1 + \lam_{n-1},
(k\!-\!1)\lam_1 + 2 \lam_n,
\\
&
k \lam_1 + \lam_n,
k \lam_1 + \lam_{n-1},
\ldots,
k \lam_1 + \lam_2,
(k\!+\!1) \lam_1
\}
\end{split}
\end{equation}
The weight $k \lam_1 + \lam_2 \in AB_e \bs AB_v$ is level $k+2$ and induces to the weight $k \lam_1 \in AB_v$.  The first $n$ weights and the very last weight of $AB_e \bs AB_v$, in the above order, are level $k+1$, therefore singular.  After the first $n$ weights, the next $n-3$ weights are level $k+2$ but are nevertheless singular.  The affine Steinberg basis is therefore induction closed.  By Proposition~\ref{prop-indclosed} the fusion kernel is 
$$\big\langle\big\{[\lam]_e \st \lam \in AB_e \bs (AB_v \cup \{k \lam_1 + \lam_2\})\big\} \cup \big\{[k \lam_1 + \lam_2]_e + [k \lam_1]_e\big\}\big\rangle$$ 
and the resulting fusion ideal is listed in Theorem~\ref{thm-fusion}.

\subsubsection*{Type C} \label{sec-c}

Let $v$ be the vertex of the alcove corresponding to the simple root $\alpha_1$; this root is the terminal short root in the nonaffine Dynkin diagram.  Choose $\bar{D} \bs \alpha_1$ as the positive simple roots of the centralizer $Z_e$.  The $Z_e$-positive Weyl group is
\begin{equation} \nn
\begin{split}
W^{{Z_e}_+} = \{
& 1, w_1, w_2 w_1, w_3 w_2 w_1, \ldots, w_n w_{n-1} \cdots w_1,
\\
& w_{n-1} w_n w_{n-1} \cdots w_1, w_{n-2} w_{n-1} w_n w_{n-1} \cdots w_1, \ldots, w_1 w_2 \cdots w_n w_{n-1} \cdots w_1\}
\end{split}
\end{equation}
The corresponding collection of $Z_e$-positive weights is
\begin{equation} \nn
\Lambda^{{Z_e}_+} = \{0, \lam_1, \lam_2, \lam_3, \ldots, \lam_n, \lam_{n-1}, \lam_{n-2}, \ldots, \lam_1\}
\end{equation}
and the resulting Steinberg basis is
\begin{equation} \nn
\begin{split}
B_e = \{
& 0, w_1 \lam_1, w_1 w_2 \lam_2, w_1 w_2 w_3 \lam_3, \ldots, w_1 w_2 \cdots w_n \lam_n,
\\
& w_1 w_2 \cdots w_n w_{n-1} \lam_{n-1}, \ldots, w_1 w_2 \cdots w_n w_{n-1} \cdots w_1 \lam_1
\}
\end{split}
\end{equation}
Choose $(\bar{D} \cup {-}\alpha_0) \bs \alpha_1$ as the positive simple roots of the centralizer $Z_v$.  The $Z_v$-positive Weyl group elements are the first $n$ $Z_e$-positive elements:
\begin{equation} \nn
W^{{Z_v}_+} = \{1, w_1, w_2 w_1, w_3 w_2 w_1, \ldots, w_{n-1} w_{n-2} \cdots w_1\}
\end{equation}
The resulting Steinberg basis is
\begin{equation} \nn
B_v = \{0, w_1 \lam_1, w_1 w_2 \lam_2, w_1 w_2 w_3 \lam_3, \ldots, w_1 w_2 \cdots w_{n-1} \lam_{n-1}\}
\end{equation}

Let $r$ denote the Weyl reflection $w_0$.  This automorphism fixes the positive root system of $Z_e$ while transforming the positive simple roots $(\bar{D} \cup -\alpha_0) \bs \alpha_1$ of $Z_v$ into the roots $D \bs \alpha_1$.  The pair
\[(AB_e, AB_v) = (r(B_e) + k \lam_1, r(B_v) + k \lam_1)\]
is a level $k$ affine Steinberg basis for $(Z_e, Z_v)$.  

The weights of the complement $AB_e \bs AB_v$ are all level $k+1$, thus singular.  The affine Steinberg basis is therefore induction closed.  By Proposition~\ref{prop-indclosed} the fusion kernel is $\{[\lam]_e \st \lam \in AB_e \bs AB_v\}$, and the resulting fusion ideal is listed in Theorem~\ref{thm-fusion}.

\subsubsection*{Type D} \label{sec-d}

Let $v$ be the vertex of the alcove corresponding to the simple root $\alpha_1$; this root is the isolated terminal root in the nonaffine Dynkin diagram.  Choose $\bar{D} \bs \alpha_1$ as the positive simple roots of the centralizer $Z_e$.  The $Z_e$-positive Weyl group is
\begin{equation} \nn
\begin{split}
W^{{Z_e}_+} = \{
& 1, w_1, w_2 w_1, w_3 w_2 w_1, 
\ldots, 
w_{n-1} w_{n-2} \cdots w_1, 
w_n w_{n-2} w_{n-3} \cdots w_1, 
w_n w_{n-1} \cdots w_1,
\\
& w_{n-2} w_n w_{n-1} \cdots w_1, 
w_{n-3} w_{n-2} w_n w_{n-1} \cdots w_1, 
\ldots, 
w_1 w_2 \cdots w_{n-2} w_n w_{n-1} \cdots w_1\}
\end{split}
\end{equation}
The corresponding collection of $Z_e$-positive weights is
\begin{equation} \nn
\Lambda^{{Z_e}_+} = \{0, \lam_1, \lam_2, \lam_3, 
\ldots, 
\lam_{n-1}, 
\lam_n, 
\lam_{n-1} + \lam_n, 
\lam_{n-2}, 
\lam_{n-3}, 
\ldots, 
\lam_1\}
\end{equation}
and the resulting Steinberg basis is
\begin{equation} \nn
\begin{split}
B_e = \{
& 
0, w_1 \lam_1, w_1 w_2 \lam_2, w_1 w_2 w_3 \lam_3, 
\ldots, 
w_1 w_2 \cdots w_{n-1} \lam_{n-1}, 
w_1 w_2 \cdots w_{n-2} w_n \lam_n,
\\
&
w_1 w_2 \cdots w_n (\lam_{n-1} + \lam_n),
w_1 w_2 \cdots w_n w_{n-2} \lam_{n-2},
\\
&
w_1 w_2 \cdots w_n w_{n-2} w_{n-3} \lam_{n-3}, 
\ldots, 
w_1 w_2 \cdots w_n w_{n-2} w_{n-3} \cdots w_1 \lam_1
\}
\end{split}
\end{equation}
The centralizer $Z_v$ is the group $G$, and therefore $B_v = \{0\}$.

Let $r$ denote the automorphism of $\ft^{\ast}$ determined by the nontrivial order $2$ automorphism of the affine Dynkin diagram that exchanges $\alpha_1$ and $\alpha_0$, and fixes the other simple roots.  This automorphism preserves the positive root system of $Z_e$ while transforming the positive simple roots $\bar{D}$ of $Z_v$ into the roots $D \bs \alpha_1$.  The pair
\[(AB_e, AB_v) = (r(B_e) + k \lam_1, r(B_v) + k \lam_1)\]
is a level $k$ affine Steinberg basis for $(Z_e, Z_v)$.  

The single weight $AB_v = \{k \lam_1\}$ is level $k$.  The remaining weights of $AB_e$ are
\begin{equation} \nn
\begin{split}
AB_e \bs AB_v = \{
&
(k\!-\!1)\lam_1 + \lam_2,
(k\!-\!1)\lam_1 + \lam_3,
\ldots,
(k\!-\!1)\lam_1 + \lam_{n-2},
\\
&
(k\!-\!1)\lam_1 + \lam_{n-1} + \lam_n,
k \lam_1 + \lam_n,
k \lam_1 + \lam_{n-1},
k \lam_1 + \lam_{n-1} + \lam_n,
\\
&
k \lam_1 + \lam_{n-2},
k \lam_1 + \lam_{n-3},
\ldots,
k \lam_1 + \lam_2,
(k\!+\!1) \lam_1
\}
\end{split}
\end{equation}
The weight $k \lam_1 + \lam_2 \in AB_e \bs AB_v$ is level $k+2$ and induces to the weight $k \lam_1 \in AB_v$.  The first $n$ weights and the very last weight of $AB_e \bs AB_v$, in the above order, are level $k+1$, therefore singular.  After the first $n$ weights, the next $n-3$ weights are level $k+2$ but are nevertheless singular.  The affine Steinberg basis is therefore induction closed.  By Proposition~\ref{prop-indclosed} the fusion kernel is 
$$\big\langle\big\{[\lam]_e \st \lam \in AB_e \bs (AB_v \cup \{k \lam_1 + \lam_2\})\big\} \cup \big\{[k \lam_1 + \lam_2]_e + [k \lam_1]_e\big\}\big\rangle$$ 
and the resulting fusion ideal is listed in Theorem~\ref{thm-fusion}.

\subsubsection*{$G_2$} \label{sec-g}

Let $v$ be the vertex of the alcove corresponding to the simple root $\alpha_{2}$; this root is the long root in the nonaffine Dynkin diagram.  Choose $\bar{D} \bs \alpha_{2}$ as the positive simple roots of the centralizer $Z_e$.  The $Z_e$-positive Weyl group is
\begin{equation} \nn
W^{{Z_e}_+} = \{1, w_{2}, w_{1} w_{2}, w_{2} w_{1} w_{2}, w_{1} w_{2} w_{1} w_{2}, w_{2} w_{1} w_{2} w_{1} w_{2} \}
\end{equation}
The corresponding collection of $Z_e$-positive weights is
\begin{equation} \nn
\Lambda^{{Z_e}_+} = \{0, \lam_{2}, \lam_{1}, \lam_{2}, \lam_{1}, \lam_{2}\}
\end{equation}
and the resulting Steinberg basis is
\begin{equation} \nn
B_e = \{0, w_{2} \lam_{2}, w_{2} w_{1} \lam_{1}, w_{2} w_{1} w_{2} \lam_{2}, w_{2} w_{1} w_{2} w_{1} \lam_{1}, 
w_{2} w_{1} w_{2} w_{1} w_{2} \lam_{2}
\}
\end{equation}
Choose $(\bar{D} \cup -\alpha_0) \bs \alpha_{2}$ as the positive simple roots of the centralizer $Z_v$.  The $Z_v$-positive Weyl group elements are the first $3$ $Z_e$-positive elements:
\begin{equation} \nn
W^{{Z_v}_+} = \{1, w_{2}, w_{1} w_{2}\}
\end{equation}
The resulting Steinberg basis is
\begin{equation} \nn
B_v = \{0, w_{2} \lam_{2}, w_{2} w_{1} \lam_{1}\}
\end{equation}

Suppose the level $k$ is even.
Let $r$ denote the Weyl reflection $w_0$.  This automorphism fixes the positive root system of $Z_e$ while transforming the positive simple roots $(\bar{D} \cup -\alpha_0) \bs \alpha_{2}$ of $Z_v$ into the roots $D \bs \alpha_{2}$.  The pair
\[(AB^{\text{ev}}_e, AB^{\text{ev}}_v) = \Big(r(B_e) + k \frac{\lam_{2}}{2}, r(B_v) + k \frac{\lam_{2}}{2}\Big)\]
is a level $k$ affine Steinberg basis for $(Z_e, Z_v)$.  Of the three weights 
\[AB^{\text{ev}}_e \bs AB^{\text{ev}}_v = 
\Big\{\Big(\frac{k}{2} - 1\Big) \lam_{2} + 3 \lam_{1}, \frac{k}{2} \lam_{2} + \lam_{1}, \Big(\frac{k}{2} + 1\Big) \lam_{2}\Big\}\]
the first two are level $k+1$ thus singular, and the third induces to the weight $\frac{k}{2} \lam_{2}$.  The affine Steinberg basis is therefore induction closed.  Proposition~\ref{prop-indclosed} identifies the fusion kernel, and Lemma~\ref{lemma-onevert} the fusion ideal---the result is listed in Theorem~\ref{thm-fusion}.

Suppose the level $k$ is odd.
Instead of a reflection, we apply an affine transformation to the Steinberg basis.  The pair
\[(AB^{\text{odd}}_e, AB^{\text{odd}}_v) = \Big(B_e + (k+1) \frac{\lam_{2}}{2}, B_v + (k+1) \frac{\lam_{2}}{2}\Big)\]
is a level $k$ affine Steinberg basis for $(Z_e, Z_v)$.  That $AB^{\text{odd}}_e$ provides an $R[G]$-basis for $R[Z_e]$ is clear, so it suffices to check that 
\[
AB^{\text{odd}}_v = \Big\{\frac{k-3}{2} \lam_{2} + 3 \lam_{1}, \frac{k-1}{2} \lam_{2} + \lam_{1}, \frac{k-1}{2} \lam_{2}\Big\}
\]
provides an $R[G]$-basis for $R_k[Z_v]$.  First note that the span of the irreducible twisted representations $\{[\lam]_v \st \lam \in AB^{\text{odd}}_v\}$ contains the twisted representation $[\frac{k-3}{2} \lam_{2} + 2 \lam_{1}]_v$; next note that the span of $\{[\frac{k-1}{2} \lam_{2} + \lam_{1}]_v, [\frac{k-1}{2} \lam_{2}]_v, [\frac{k-3}{2} \lam_{2} + 2 \lam_{1}]_v\}$ contains $[\frac{k-3}{2} \lam_{2} + \lam_{1}]_v$; finally note that Lemma 3.2 in~\cite{douglas-sfi} proves that $\{[\frac{k-1}{2} \lam_{2} + \lam_{1}]_v, [\frac{k-1}{2} \lam_{2}]_v, [\frac{k-3}{2} \lam_{2} + \lam_{1}]_v\}$ is an $R[G]$-basis for $R_k[Z_v]$.  The affine Steinberg basis $(AB^{\text{odd}}_e, AB^{\text{odd}}_v)$ is certainly induction closed.  As before Proposition~\ref{prop-indclosed} and Lemma~\ref{lemma-onevert} compute the fusion ideal and the result is listed in Theorem~\ref{thm-fusion}.

\subsection{The exceptional groups $E_6$ and $E_7$} \label{sec-e67}

\subsubsection*{$E_6$}

Let $v$ be the vertex of the alcove corresponding to the simple root $\alpha_1$; this root is a terminal root of a long leg of the nonaffine Dynkin diagram.  Choose $\bar{D} \bs \alpha_1$ as the positive simple roots of the centralizer $Z_e$.  The $Z_e$-positive Weyl group is
\begin{equation} \nn
\begin{split}
W^{{Z_e}_+} = 
\{
&
\mathrm{id}, 1, 3 1, 4 3 1, 2 4 3 1, 5 4 3 1, 5 2 4 3 1, 6 5 4 3 1, 4 5 2 4 3 1, 2 6 5 4 3 1, 3 4 5 2 4 3 1, 4 2 6 5 4 3 1, 1 3 4 5 2 4 3 1, 
\\
&
6 3 4 5 2 4 3 1, 5 4 2 6 5 4 3 1, 6 1 3 4 5 2 4 3 1, 3 5 4 2 6 5 4 3 1, 5 6 1 3 4 5 2 4 3 1, 4 3 5 4 2 6 5 4 3 1, 4 5 6 1 3 4 5 2 4 3 1, 
\\
&
2 4 3 5 4 2 6 5 4 3 1, 3 4 5 6 1 3 4 5 2 4 3 1, 1 2 4 3 5 4 2 6 5 4 3 1, 2 3 4 5 6 1 3 4 5 2 4 3 1, 4 2 3 4 5 6 1 3 4 5 2 4 3 1, 
\\
&
5 4 2 3 4 5 6 1 3 4 5 2 4 3 1, 6 5 4 2 3 4 5 6 1 3 4 5 2 4 3 1
\}
\end{split}
\end{equation}
Here we have abbreviated the Weyl element $w_i$ as $i$.
The corresponding collection of $Z_e$-positive weights is
\begin{equation} \nn
\begin{split}
\Lambda^{{Z_e}_+} = \{
&
\lam_w \st w \in W^{{Z_e}_+}\} \\
=
\{
&
0, \lam_1, \lam_3, \lam_4, \lam_2, \lam_5, \lam_2 + \lam_5, \lam_6, \lam_4, \lam_2 + \lam_6, \lam_3, \lam_4 + \lam_6, \lam_1, \lam_3 + \lam_6, \lam_5, \lam_1 + \lam_6, \\
&
\lam_3 + \lam_5, \lam_1 +\lam_5, \lam_4, \lam_1 + \lam_4, \lam_2, \lam_3, \lam_1 + \lam_2, \lam_2 + \lam_3, \lam_4, \lam_5, \lam_6
\}
\end{split}
\end{equation}
The resulting Steinberg basis is $B_e = \{w^{-1} \lam_w \st w \in W^{{Z_e}_+}\}$.  The centralizer $Z_v$ is the group $G$, and therefore $B_v = \{0\}$.

Let $r$ denote the automorphism of $\ft^{\ast}$ determined by the nontrivial order $2$ automorphism of the affine Dynkin diagram that exchanges $\alpha_1$ and $\alpha_0$.  This automorphism preserves the positive root system of $Z_e$ while transforming the positive simple roots $\bar{D}$ of $Z_v$ into the roots $D \bs \alpha_1$.  The pair
\[(AB_e, AB_v) = (r(B_e) + k \lam_1, r(B_v) + k \lam_1)\]
is a level $k$ affine Steinberg basis for $(Z_e, Z_v)$.  The weights of the complement $AB_e \bs AB_v$ are
\begin{equation} \nn
\begin{split}
AB_e \bs AB_v =
\{
&
[k-1,1,0,0,0,0],
[k-2,0,0,1,0,0],
[k-3,0,1,0,1,0],
[k-1,0,0,0,1,0],
\\
&
[k-2,0,1,0,0,1],
[k-3,0,1,0,1,1],
[k-1,0,1,0,0,0],
[k-2,0,0,1,0,1],
\\
&
[k-2,0,1,0,1,0],
[k-1,1,0,0,0,1],
[k-3,0,1,1,0,1],
[k,0,0,0,0,1],
\\
&
[k-2,1,1,0,0,1],
[k-1,0,0,1,0,0],
[k-1,0,1,0,0,1],
[k-2,1,0,1,0,1],
\\
&
[k-1,0,0,1,0,1],
[k-1,1,0,0,1,0],
[k-1,1,0,0,1,1],
[k,1,0,0,0,0],
\\
&
[k,0,0,0,1,0],
[k,1,0,0,0,1],
[k,1,0,0,1,0],
[k,0,0,1,0,0],
[k,0,1,0,0,0],
\\
&
[k+1,0,0,0,0,0]
\}
\end{split}
\end{equation}
Here $[n_1, n_2, n_3, n_4, n_5, n_6]$ denotes the weight $\sum n_i \lam_i$.  We will refer to those twenty-six weights in the order listed.

Because the vertex $v$ is central, Proposition~\ref{prop-central} describes the fusion kernel.  Every weight of $AB_e \bs AB_v$ is singular for induction to $R_k[Z_v]$, except weights 10, 11, 16, 17, 20, and 24.  Those six weights induce to, respectively, the signed weights
\begin{equation} \nn
\begin{split}
&
{-}[k-1,0,0,0,0,1],
+[k-3,0,1,0,0,1],
+[k-2,0,0,0,0,1],
\\
&
{+}[k-1,0,0,0,0,1],
-[k,0,0,0,0,0],
+[k,0,0,0,0,0]
\end{split}
\end{equation}
Let $w \in W$ be such that $w(C_v - k \lam_1) = C$---this transformation $\lam \mapsto w(\lam - k \lam_1)$ associates highest weights for representations in $R[G]$ to highest weights for representations in $R_k[Z_v]$.  Under this transformation, the above six signed weights become respectively
\begin{equation} \nn
\begin{split}
&
{-}[1,0,0,0,0,0],
+[1,1,0,0,0,0],
+[1,0,0,0,0,1],
\\
&
{+}[1,0,0,0,0,0],
-[0,0,0,0,0,0],
+[0,0,0,0,0,0]
\end{split}
\end{equation}
By Proposition~\ref{prop-central} and Lemma~\ref{lemma-onevert}, the resulting fusion ideal is as listed in Theorem~\ref{thm-fusion}.

\subsubsection*{$E_7$}

Let $v$ be the vertex of the alcove corresponding to the simple root $\alpha_7$; this root is the terminal root of the long leg of the nonaffine Dynkin diagram.  Choose $\bar{D} \bs \alpha_7$ as the positive simple roots of the centralizer $Z_e$.  The $Z_e$-positive Weyl group is
\begin{equation} \nn
\begin{split}
W^{{Z_e}_+} = 
\{
&
\mathrm{id}, 7, 6 7, 5 6 7, 4 5 6 7, 2 4 5 6 7, 3 4 5 6 7, 3 2 4 5 6 7, 1 3 4 5 6 7, 4 3 2 4 5 6 7, 2 1 3 4 5 6 7, 5 4 3 2 4 5 6 7, 
\\
&
1 4 3 2 4 5 6 7, 6 5 4 3 2 4 5 6 7, 1 5 4 3 2 4 5 6 7, 3 1 4 3 2 4 5 6 7, 7 6 5 4 3 2 4 5 6 7, 1 6 5 4 3 2 4 5 6 7, 
\\
&
3 1 5 4 3 2 4 5 6 7, 1 7 6 5 4 3 2 4 5 6 7, 3 1 6 5 4 3 2 4 5 6 7, 4 3 1 5 4 3 2 4 5 6 7, 3 1 7 6 5 4 3 2 4 5 6 7, 
\\
&
4 3 1 6 5 4 3 2 4 5 6 7, 2 4 3 1 5 4 3 2 4 5 6 7, 4 3 1 7 6 5 4 3 2 4 5 6 7, 5 4 3 1 6 5 4 3 2 4 5 6 7, 2 4 3 1 6 5 4 3 2 4 5 6 7, 
\\
&
5 4 3 1 7 6 5 4 3 2 4 5 6 7, 2 4 3 1 7 6 5 4 3 2 4 5 6 7, 5 2 4 3 1 6 5 4 3 2 4 5 6 7, 6 5 4 3 1 7 6 5 4 3 2 4 5 6 7, 
\\
&
2 5 4 3 1 7 6 5 4 3 2 4 5 6 7, 4 5 2 4 3 1 6 5 4 3 2 4 5 6 7, 2 6 5 4 3 1 7 6 5 4 3 2 4 5 6 7, 4 2 5 4 3 1 7 6 5 4 3 2 4 5 6 7, 
\\
&
3 4 5 2 4 3 1 6 5 4 3 2 4 5 6 7, 4 2 6 5 4 3 1 7 6 5 4 3 2 4 5 6 7, 3 4 2 5 4 3 1 7 6 5 4 3 2 4 5 6 7, 1 3 4 5 2 4 3 1 6 5 4 3 2 4 5 6 7, 
\\
&
5 4 2 6 5 4 3 1 7 6 5 4 3 2 4 5 6 7, 3 4 2 6 5 4 3 1 7 6 5 4 3 2 4 5 6 7, 1 3 4 2 5 4 3 1 7 6 5 4 3 2 4 5 6 7, 
\\
&
3 5 4 2 6 5 4 3 1 7 6 5 4 3 2 4 5 6 7, 1 3 4 2 6 5 4 3 1 7 6 5 4 3 2 4 5 6 7, 4 3 5 4 2 6 5 4 3 1 7 6 5 4 3 2 4 5 6 7, 
\\
&
1 3 5 4 2 6 5 4 3 1 7 6 5 4 3 2 4 5 6 7, 2 4 3 5 4 2 6 5 4 3 1 7 6 5 4 3 2 4 5 6 7, 1 4 3 5 4 2 6 5 4 3 1 7 6 5 4 3 2 4 5 6 7, 
\\
&
1 2 4 3 5 4 2 6 5 4 3 1 7 6 5 4 3 2 4 5 6 7, 3 1 4 3 5 4 2 6 5 4 3 1 7 6 5 4 3 2 4 5 6 7, 3 1 2 4 3 5 4 2 6 5 4 3 1 7 6 5 4 3 2 4 5 6 7, 
\\
&
4 3 1 2 4 3 5 4 2 6 5 4 3 1 7 6 5 4 3 2 4 5 6 7, 5 4 3 1 2 4 3 5 4 2 6 5 4 3 1 7 6 5 4 3 2 4 5 6 7, 
\\
&
6 5 4 3 1 2 4 3 5 4 2 6 5 4 3 1 7 6 5 4 3 2 4 5 6 7, 7 6 5 4 3 1 2 4 3 5 4 2 6 5 4 3 1 7 6 5 4 3 2 4 5 6 7
\}
\end{split}
\end{equation}
The corresponding collection of $Z_e$-positive weights is
\begin{equation} \nn
\begin{split}
\Lambda^{{Z_e}_+} = \{
&
0,\lam_7,\lam_6,\lam_5,\lam_4,\lam_2,\lam_3,\lam_2 + \lam_3,\lam_1,\lam_4,\lam_1 + \lam_2,\lam_5,\lam_1 + \lam_4,\lam_6,\lam_1 + \lam_5,\lam_3,\lam_7,
\\
&
\lam_1 + \lam_6,\lam_3 + \lam_5,\lam_1 + \lam_7,\lam_3 + \lam_6,\lam_4,\lam_3 + \lam_7,\lam_4 + \lam_6,\lam_2,\lam_4 + \lam_7,\lam_5,\lam_2 + \lam_6,
\\
&
\lam_5 + \lam_7,\lam_2 + \lam_7,\lam_2 + \lam_5,\lam_6,\lam_2 + \lam_5 + \lam_7,\lam_4,\lam_2 + \lam_6,\lam_4 + \lam_7,\lam_3,\lam_4 + \lam_6,
\\
&
\lam_3 + \lam_7,\lam_1,\lam_5,\lam_3 + \lam_6,\lam_1 + \lam_7,\lam_3 + \lam_5,\lam_1 + \lam_6,\lam_4,\lam_1 + \lam_5,\lam_2,\lam_1 + \lam_4,
\\
&
\lam_1 + \lam_2,\lam_3,\lam_2 + \lam_3,\lam_4,\lam_5,\lam_6,\lam_7
\}
\end{split}
\end{equation}
The resulting Steinberg basis is $B_e = \{w^{-1} \lam_w \st w \in W^{{Z_e}_+}\}$.  The centralizer $Z_v$ is the group $G$, and therefore $B_v = \{0\}$.

Let $r$ denote the automorphism of $\ft^{\ast}$ determined by the nontrivial order $2$ automorphism of the affine Dynkin diagram that exchanges $\alpha_7$ and $\alpha_0$.  This automorphism preserves the positive root system of $Z_e$ while transforming the positive simple roots $\bar{D}$ of $Z_v$ into the roots $D \bs \alpha_7$.  The pair
\[(AB_e, AB_v) = (r(B_e) + k \lam_7, r(B_v) + k \lam_7)\]
is a level $k$ affine Steinberg basis for $(Z_e, Z_v)$.  The weights of the complement $AB_e \bs AB_v$ are
\begin{equation} \nn
\begin{split}
AB_e \bs AB_v =
\{
&
[1,0,0,0,0,0,k-1],
[0,0,1,0,0,0,k-2],
[0,0,0,1,0,0,k-3],
[0,1,0,0,1,0,k-4],
\\
&
[0,0,0,0,1,0,k-2],
[0,1,0,0,0,1,k-3],
[0,1,0,0,1,1,k-5],
[0,1,0,0,0,0,k-1],
\\
&
[0,0,0,1,0,1,k-4],
[0,1,0,0,1,0,k-3],
[0,0,1,0,0,1,k-3],
[0,1,0,1,0,1,k-5],
\\
&
[1,0,0,0,0,1,k-2],
[0,1,1,0,0,1,k-4],
[0,0,0,1,0,0,k-2],
[0,0,0,0,0,1,k-1],
\\
&
[1,1,0,0,0,1,k-3],
[0,0,1,1,0,1,k-5],
[0,1,0,0,0,1,k-2],
[1,0,0,1,0,1,k-4],
\\
&
[0,0,1,0,1,0,k-3],
[0,0,0,1,0,1,k-3],
[1,0,1,0,1,1,k-5],
[0,0,1,0,0,0,k-1],
\\
&
[0,0,1,0,1,1,k-4],
[1,0,0,0,1,0,k-2],
[1,0,1,0,0,1,k-3],
[1,0,0,0,1,1,k-3],
\\
&
[0,0,1,0,0,1,k-2],
[1,0,1,0,1,0,k-3],
[0,0,0,0,1,0,k-1],
[1,0,1,0,1,1,k-4],
\\
&
[1,0,0,1,0,0,k-2],
[0,0,1,0,1,0,k-2],
[1,0,0,1,0,1,k-3],
[1,1,0,0,0,0,k-1],
\\
&
[1,0,0,1,1,0,k-3],
[1,1,0,0,0,1,k-2],
[1,0,0,0,0,0,k],
[0,0,0,1,0,0,k-1],
\\
&
[1,1,0,0,1,0,k-2],
[1,0,0,0,0,1,k-1],
[1,1,0,1,0,0,k-2],
[1,0,0,0,1,0,k-1],
\\
&
[0,1,1,0,0,0,k-1],
[1,0,0,1,0,0,k-1],
[0,1,0,0,0,0,k],
[1,1,1,0,0,0,k-1],
\\
&
[1,1,0,0,0,0,k],
[0,0,1,0,0,0,k],
[0,1,1,0,0,0,k],
[0,0,0,1,0,0,k],
\\
&
[0,0,0,0,1,0,k],
[0,0,0,0,0,1,k],
[0,0,0,0,0,0,k+1]
\}
\end{split}
\end{equation}
Here $[n_1, n_2, n_3, n_4, n_5, n_6, n_7]$ denotes the weight $\sum n_i \lam_i$.  We will refer to those fifty-five weights in the order listed.

Because the vertex $v$ is central, Proposition~\ref{prop-central} describes the fusion kernel.  Every weight of $AB_e \bs AB_v$ is singular for induction to $R_k[Z_v]$, except weights 13, 14, 21, 27, 29, 35, 39, 46, 50, and 52.  Those ten weights induce to, respectively, the signed weights
\begin{equation} \nn
\begin{split}
&
{-}[0,0,0,0,0,1,k-2],
+[0,1,0,0,0,1,k-4],
+[0,0,0,0,1,0,k-3],
+[0,0,0,0,0,1,k-3],
\\
&
{+}[0,0,0,0,0,1,k-2],
-[0,0,0,0,0,1,k-3],
-[0,0,0,0,0,0,k],
-[0,0,0,0,0,0,k-1],
\\
&
{+}[0,0,0,0,0,0,k],
-[0,0,0,0,0,0,k]
\end{split}
\end{equation}
Let $w \in W$ be such that $w(C_v - k \lam_7) = C$---this transformation $\lam \mapsto w(\lam - k \lam_7)$ associates highest weights for representations in $R[G]$ to highest weights for representations in $R_k[Z_v]$.  Under this transformation, the above ten signed weights become respectively
\begin{equation} \nn
\begin{split}
&
{-}[1,0,0,0,0,0,0],
+[1,1,0,0,0,0,0],
+[0,0,1,0,0,0,0],
+[1,0,0,0,0,0,1],
\\
&
{+}[1,0,0,0,0,0,0],
-[1,0,0,0,0,0,1],
-[0,0,0,0,0,0,0],
-[0,0,0,0,0,0,1],
\\
&
{+}[0,0,0,0,0,0,0],
-[0,0,0,0,0,0,0]
\end{split}
\end{equation}
By Proposition~\ref{prop-central} and Lemma~\ref{lemma-onevert}, the resulting fusion ideal is as listed in Theorem~\ref{thm-fusion}.

\subsection{The exceptional groups $F_4$ and $E_8$} \label{sec-fe}

\subsubsection*{$F_4$}

Let $v$ be the vertex of the alcove corresponding to the simple root $\alpha_1$; this root is the terminal long root in the nonaffine Dynkin diagram.  Choose $\bar{D} \bs \alpha_1$ as the positive simple roots of the centralizer $Z_e = \langle Sp(3) \rangle$.  Here, and henceforth, when there is an implicit inclusion of Lie algebras $\mathfrak{h} \subset \mathfrak{g}$, the notation $\langle H \rangle$ refers to the full rank subgroup of $G$ whose Lie algebra has semisimple part the Lie algebra $\mathfrak{h}$ of $H$.  The $Z_e$-positive Weyl group is
\begin{equation} \nn
\begin{split}
W^{{Z_e}_+} = \{
&
\mathrm{id}, 1, 2 1, 3 2 1, 4 3 2 1, 2 3 2 1, 2 4 3 2 1, 1 2 3 2 1, 3 2 4 3 2 1, 1 2 4 3 2 1, 2 3 2 4 3 2 1, 
\\
&
1 3 2 4 3 2 1, 1 2 3 2 4 3 2 1, 2 1 3 2 4 3 2 1, 2 1 2 3 2 4 3 2 1, 3 2 1 3 2 4 3 2 1, 3 2 1 2 3 2 4 3 2 1, 
\\
&
4 3 2 1 3 2 4 3 2 1, 2 3 2 1 2 3 2 4 3 2 1, 1 4 3 2 1 3 2 4 3 2 1, 4 2 3 2 1 2 3 2 4 3 2 1, 
\\
&
3 4 2 3 2 1 2 3 2 4 3 2 1, 2 3 4 2 3 2 1 2 3 2 4 3 2 1, 1 2 3 4 2 3 2 1 2 3 2 4 3 2 1
\}
\end{split}
\end{equation}
The corresponding collection of $Z_e$-positive weights is
\begin{equation} \nn
\begin{split}
\Lambda^{{Z_e}_+} = \{
&
0,\lam_1,\lam_2,\lam_3,\lam_4,\lam_2,\lam_2 + \lam_4,\lam_1,\lam_3,\lam_1 + \lam_4,\lam_2,\lam_1 + \lam_3,\lam_1,\lam_2,
\\
&
\lam_1 + \lam_2,\lam_3,\lam_1 + \lam_3,\lam_4,\lam_2,\lam_1 + \lam_4,\lam_2 + \lam_4,\lam_3,\lam_2,\lam_1
\} 
\end{split}
\end{equation}
and the resulting Steinberg basis is $B_e = \{w^{-1} \lam_w \st w \in W^{{Z_e}_+}\}$.
Choose $(\bar{D} \cup -\alpha_0) \bs \alpha_1$ as the positive simple roots of the centralizer $Z_v$.  The $Z_v$-positive Weyl group elements are the first $12$ $Z_e$-positive elements:
\begin{equation} \nn
W^{{Z_v}_+} = \{\mathrm{id}, 1, 2 1, 3 2 1, 4 3 2 1, 2 3 2 1, 2 4 3 2 1, 1 2 3 2 1, 3 2 4 3 2 1, 1 2 4 3 2 1, 2 3 2 4 3 2 1, 1 3 2 4 3 2 1\}
\end{equation}
The vertex Steinberg basis is then $B_v = \{w^{-1} \lam_w \st w \in W^{{Z_v}_+}\}$.

As a warmup for the case of $E_8$, we compress the above information about the positive Weyl groups and positive weights into Figure~\ref{fig-f4}.  The rightmost edges in the figure are labelled by numbers $i$.  The remaining edges can be numbered as follows: if translation of an edge in a direction perpendicular to that edge produces another edge of the figure, those edges share a label; the labels of the central hexagon alternate $1$, $2$, $1$, $2$, $1$, $2$.  Each node $p$ of the figure represents an element of the $Z_e$-positive Weyl group $W^{{Z_e}_+}$, namely the element $w_p := w_{i_1} w_{i_2} \cdots w_{i_m}$ where $i_1, i_2, \ldots, i_m$ is the sequence of labels along a path from that node $p$ to the top node of the figure.  The $Z_e$-positive weight $\lam_{w_p}$ corresponding to the $Z_e$-positive Weyl element $w_p$ is $\lam_{w_p} = \sum_{i \searrow p} \lam_i$; this sum is over the labels on the edges directly descending to the node $p$.  The $Z_v$-positive Weyl group is the collection of Weyl elements associated to the nodes in the top half of the diagram.

\begin{figure}[htb]
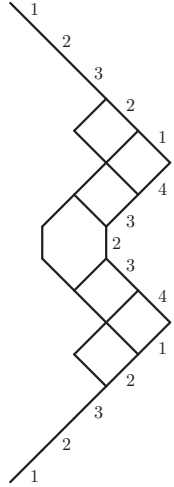

\begin{center}
\ingm{f4fig}
\caption{The $\langle Sp(3) \rangle$-positive Weyl group of $F_4$} \label{fig-f4}
\end{center}
\end{figure}

Suppose the level $k$ is even.  Let $r$ denote the Weyl reflection $w_0$.  The collection of weights
\[AB^{\text{ev}}_e = r(B_e) + k \frac{\lam_1}{2}\]
is, like $B_e$, a Steinberg basis for $Z_e$.  (It is moreover a level $k$ affine Steinberg basis for the pair $(Z_e, Z_v)$, but we will not exploit this fact.)  The weights of $AB^{\text{ev}}_e$ are as follows:
\begin{equation} \nn
\begin{split}
AB^{\text{ev}}_e = \bigg\{
&
\left[\frac{k}{2},0,0,0\right],
\left[\frac{k}{2}-2,1,0,0\right],
\left[\frac{k}{2}-3,0,2,0\right],
\left[\frac{k}{2}-2,0,1,1\right],
\left[\frac{k}{2}-1,0,1,0\right],
\\
&
\left[\frac{k}{2}-3,1,0,2\right],
\left[\frac{k}{2}-4,1,1,2\right],
\left[\frac{k}{2}-1,0,0,2\right],
\left[\frac{k}{2}-2,1,0,1\right],
\left[\frac{k}{2}-2,0,1,2\right],
\\
&
\left[\frac{k}{2}-3,2,0,0\right],
\left[\frac{k}{2}-3,1,0,3\right],
\left[\frac{k}{2}-1,1,0,0\right],
\left[\frac{k}{2}-2,1,0,2\right],
\left[\frac{k}{2}-3,2,0,2\right],
\\
&
\left[\frac{k}{2}-1,0,1,1\right],
\left[\frac{k}{2}-2,1,1,1\right],
\left[\frac{k}{2},0,0,1\right],
\left[\frac{k}{2}-1,0,2,0\right],
\left[\frac{k}{2}-1,1,0,1\right],
\\
&
\left[\frac{k}{2}-1,0,2,1\right],
\left[\frac{k}{2},0,1,0\right],
\left[\frac{k}{2},1,0,0\right],
\left[\frac{k}{2}+1,0,0,0\right]
\bigg\} 
\end{split}
\end{equation}
According to Lemma~\ref{lemma-orthsing}, the fusion kernel is controlled by this collection of weights together with generators for the modulo $2$ singular representation module $R[S]/(2)$.  Lemma~\ref{lemma-bs} identifies such generators in terms of the projection of the odd level weights in the basis $AB^{\text{ev}}_e$ to the level $k+1$ singular plane.  The image of that projection is the collection of six singular weights
\begin{equation} \nn
\begin{split}
B^{\text{ev}}_s = \bigg\{
&
\left[\frac{k}{2}-1,1,0,0\right],
\left[\frac{k}{2}-2,1,0,2\right],
\left[\frac{k}{2}-1,0,1,1\right],
\\
&
\left[\frac{k}{2},0,0,1\right],
\left[\frac{k}{2}-3,1,1,2\right],
\left[\frac{k}{2}-2,0,2,1\right]
\bigg\}
\end{split}
\end{equation}
These weights include the four singular weights of $AB^{\text{ev}}_e$, namely the 13-th, 14-th, 16-th, and 18-th weights in the above order, and only the last two weights of $B^{\text{ev}}_s$, that is $[\frac{k}{2} \!-\! 3,1,1,2]$ and $[\frac{k}{2} \!-\! 2,0,2,1]$, are not already weights of $AB^{\text{ev}}_e$.  Corollary~\ref{cor-orthog} assembles the fusion kernel in terms of this data: the given six singular weights $\lam \in B^{\text{ev}}_s$ give the elements $[\lam]_e$ in the kernel, while the nonsingular weights $\lam$ of $AB^{\text{ev}}_e$ give the elements $[\lam]_e + [w_{(k+1) \cdot \alpha_0} \lam]_e$ in the kernel.  Note that the 20-th, 22-nd, 23-rd, and 24-th weights of $AB^{\text{ev}}_e$, in the above order, produce the same element of the kernel as respectively the 9-th, 5-th, 2-nd, and 1-st weights.  Lemma~\ref{lemma-onevert} gives the fusion ideal listed in Theorem~\ref{thm-fusion}; there the representations associated to nonsingular weights of $AB^{\text{ev}}_e$ are listed first, followed by the representations associated to the singular weights $B^{\text{ev}}_s$.

The analysis for odd level $k$ is similar.  The weights $AB^{\text{odd}}_e = r(B_e) + (k+1) \frac{\lam_1}{2}$ form a Steinberg basis for $Z_e$.  (Note that this collection cannot be a level $k$ affine Steinberg basis for the pair $(Z_e, Z_v)$.)  Explicitly these weights are as follows:
\begin{equation} \nn
\begin{split}
AB^{\text{odd}}_e = \bigg\{
&
\left[\frac{k+1}{2},0,0,0\right],
\left[\frac{k-3}{2},1,0,0\right],
\left[\frac{k-5}{2},0,2,0\right],
\left[\frac{k-3}{2},0,1,1\right],
\left[\frac{k-1}{2},0,1,0\right],
\\
&
\left[\frac{k-5}{2},1,0,2\right],
\left[\frac{k-7}{2},1,1,2\right],
\left[\frac{k-1}{2},0,0,2\right],
\left[\frac{k-3}{2},1,0,1\right],
\left[\frac{k-3}{2},0,1,2\right],
\\
&
\left[\frac{k-5}{2},2,0,0\right],
\left[\frac{k-5}{2},1,0,3\right],
\left[\frac{k-1}{2},1,0,0\right],
\left[\frac{k-3}{2},1,0,2\right],
\left[\frac{k-5}{2},2,0,2\right],
\\
&
\left[\frac{k-1}{2},0,1,1\right],
\left[\frac{k-3}{2},1,1,1\right],
\left[\frac{k+1}{2},0,0,1\right],
\left[\frac{k-1}{2},0,2,0\right],
\left[\frac{k-1}{2},1,0,1\right],
\\
&
\left[\frac{k-1}{2},0,2,1\right],
\left[\frac{k+1}{2},0,1,0\right],
\left[\frac{k+1}{2},1,0,0\right],
\left[\frac{k+3}{2},0,0,0\right]
\bigg\}
\end{split}
\end{equation}
The projection of the even level weights in $AB^{\text{odd}}_e$ to the level $k+1$ singular plane is the collection
\begin{equation} \nn
\begin{split}
B^{\text{odd}}_s = \bigg\{
&
\left[\frac{k+1}{2},0,0,0\right],
\left[\frac{k-1}{2},0,1,0\right],
\left[\frac{k-1}{2},0,0,2\right],
\left[\frac{k-3}{2},1,0,1\right],
\left[\frac{k-3}{2},0,1,2\right],
\\
&
\left[\frac{k-5}{2},2,0,0\right],
\left[\frac{k-5}{2},1,0,3\right],
\left[\frac{k-3}{2},0,2,0\right],
\left[\frac{k-7}{2},2,0,2\right],
\left[\frac{k-5}{2},1,1,1\right]
\bigg\}
\end{split}
\end{equation}
These weights include the seven singular weights of $AB^{\text{odd}}_e$, namely the 1-st, 5-th, 8-th, 9-th, 10-th, 11-th, and 12-th weights in the above order, and only the last three weights of $B^{\text{odd}}_s$, that is $[\frac{k-3}{2},0,2,0]$, $[\frac{k-7}{2},2,0,2]$, and $[\frac{k-5}{2},1,1,1]$, are not already in $AB^{\text{odd}}_e$.  The 13-th, 14-th, 16-th, and 19-th weights of $AB^{\text{odd}}_e$ produce the same element of the fusion kernel as respectively the 2-nd, 6-th, 4-th, and 3-rd weights.  The resulting fusion ideal is given in Theorem~\ref{thm-fusion}; again there the representations associated to nonsingular weights of $AB^{\text{odd}}_e$ are listed first, followed by the representations associated to the singular weights $B^{\text{odd}}_s$.

\subsubsection*{$E_8$} 

Let $v$ be the vertex of the alcove corresponding to the simple root $\alpha_8$; this root is the terminal root of the long leg of the nonaffine Dynkin diagram.  Choose $\bar{D} \bs \alpha_8$ as the positive simple roots of the centralizer $Z_e= \langle E_7 \rangle$.  The $Z_e$-positive Weyl group is drawn in Figure~\ref{fig-e8}.  As in the corresponding figure for $F_4$, the rightmost edges are labelled by numbers $i$ and the remaining edges can be numbered as follows: if translation of an edge in a direction perpendicular to that edge produces another edge of the figure, those edges share a label; the labels of the central hexagons alternate $i$, $j$, $i$, $j$, $i$, $j$.  Again, each node $p$ of the figure represents an element of the $Z_e$-positive Weyl group $W^{{Z_e}_+}$, namely the element $w_p := w_{i_1} w_{i_2} \cdots w_{i_m}$ where $i_1, i_2, \ldots, i_m$ is the sequence of labels along a path from that node $p$ to the top node of the figure.  The $Z_e$-positive weight $\lam_{w_p}$ corresponding to the $Z_e$-positive Weyl element $w_p$ is $\lam_{w_p} = \sum_{i \searrow p} \lam_i$; this sum is over the labels on the edges directly descending to the node $p$.

\pagestyle{empty}
\thispagestyle{plain}
\begin{figure}[p]
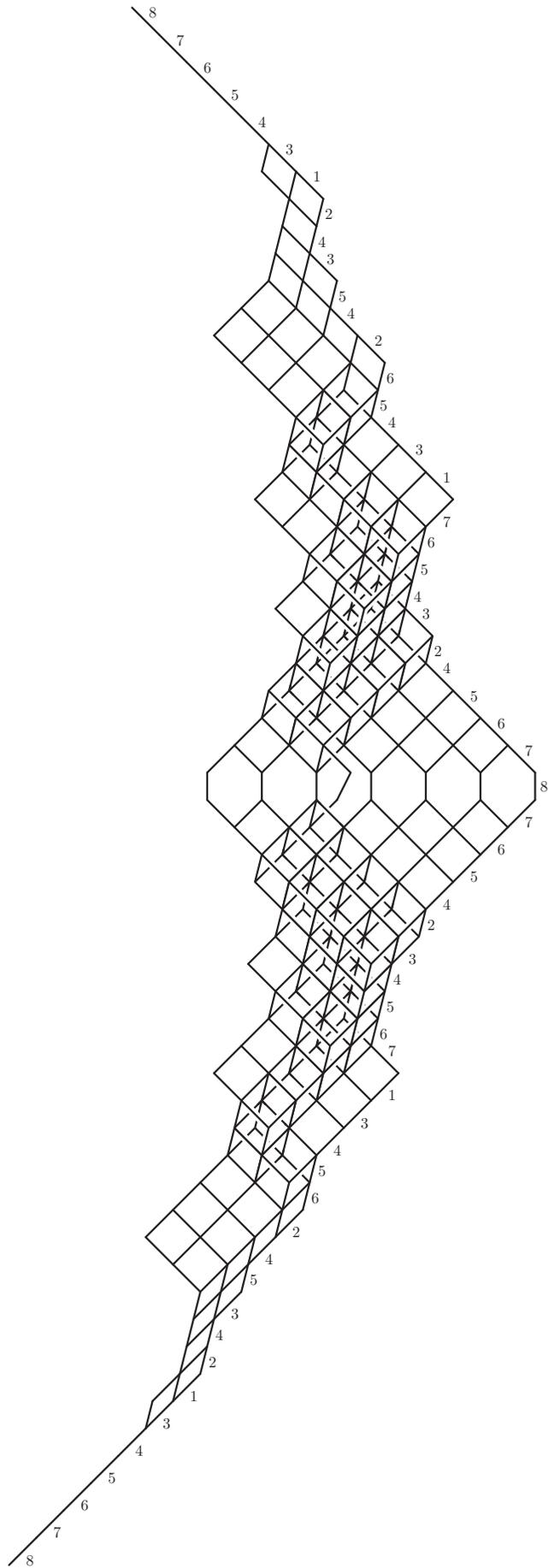

\begin{center}
\vspace*{-55pt} 
\ingm{e8fig}
\caption{The $\langle E_7 \rangle$-positive Weyl group of $E_8$} \label{fig-e8}
\end{center}
\end{figure}

We give an example of the above interpretation of the figure.  The eight edges in the vertical middle of the diagram are labelled, from left to right, $1, 3, 4, 2, 5, 6, 7, 8$.  The node $p$ at the top of that edge labelled $4$ represents the Weyl element $w_p = 3 1 2 5 4 3 6 5 4 2 7 6 5 4 3 1 8 7 6 5 4 3 2 4 5 6 7 8$; here we followed the lefthand edge of the diagram.  The positive weight corresponding to this node is $\lam_{w_p} = \lam_3 + \lam_2 + \lam_5$.

As usual, in terms of the $Z_e$-positive Weyl elements $W^{{Z_e}_+}$ and weights $\Lambda^{{Z_e}_+}$, the Steinberg basis for $R[Z_e]$ as an $R[G]$-module is $B_e = \{w^{-1} \lam_w \st w \in W^{{Z_e}_+}\}$.  Choose $(\bar{D} \cup -\alpha_0) \bs \alpha_8$ as the positive simple roots of the centralizer $Z_v=E_7 \times_{\ZZ/2} SU(2)$.  The $Z_v$-positive Weyl group $W^{{Z_v}_+}$ is the collection of Weyl elements associated to the nodes in the top half of Figure~\ref{fig-e8}, and the vertex Steinberg basis is $B_v = \{w^{-1} \lam_w \st w \in W^{{Z_v}_+}\}$.

\begin{remark}
Figure~\ref{fig-e8} provides a diagram of the $\langle E_7 \rangle$-positive Weyl group of $E_8$.  The reader may have noticed not only that the eight central edges of the diagram are arranged in the pattern of the Dynkin diagram of $E_8$, but that the entire figure is precisely the Hasse diagram of the roots of $E_8$.  This convergence is rather specialized---it does not hold, for instance, for any of the other centralizers we have considered in the preceding sections.  That said, there are entertaining combinatorial relationships among the $E$-exceptional positive Weyl groups.  For instance, both the $\langle E_6 \rangle$-positive Weyl group of $E_7$ and the $\langle D_5 \rangle$-positive Weyl group of $E_6$ embed into the $\langle E_7 \rangle$-positive Weyl group of $E_8$.  There are also more subtle resulting correspondences between the Steinberg bases for $(\langle D_5 \rangle \subset E_6)$ and $(\langle E_6 \rangle \subset E_7)$ and the Steinberg basis for $(\langle E_7 \rangle \subset E_8)$.  We can imagine that these phenomena are related to the ideas explored by Purbhoo~\cite{purbhoo-comp} concerning compression of exceptional root systems.
\end{remark}

Equipped with the Steinberg basis $B_e$ for $R[\langle E_7 \rangle]$ as an $R[E_8]$-module, we can complete the computation of the fusion ideal of $E_8$.  Suppose the level $k$ is even.  Let $r$ denote the Weyl reflection $w_0$.  The pair
\[
(AB^{\text{ev}}_e, AB^{\text{ev}}_v) = \Big(r(B_e) + k \frac{\lam_8}{2}, r(B_v) + k \frac{\lam_8}{2}\Big)
\]
is a level $k$ affine Steinberg basis for $(Z_e, Z_v)$.  The complement $AB^{\text{ev}}_e \bs AB^{\text{ev}}_v$ is the collection of weights $r(w_p^{-1} \lam_{w_p}) + k \frac{\lam_8}{2}$ for $p$ a node in the bottom half of Figure~\ref{fig-e8}.  The top 8 weights of that bottom half (in the vertical partial order of the figure) are level $k+1$, thus singular for induction to the representation module $R_k[Z_v]$.  The remaining 112 weights of $AB^{\text{ev}}_e \bs AB^{\text{ev}}_v$ all induce, by the reflection $w_{(k+1) \cdot \alpha_0}$, into the collection $AB^{\text{ev}}_v$.  This affine Steinberg basis is therefore induction closed in the sense of Section~\ref{sec-indclosed}.  Proposition~\ref{prop-indclosed} identifies the fusion kernel as 
\[
\big\{[\lam]_e \st \lam \in S_v\big\} \cup \big\{[\lam]_e + [w_{(k+1) \cdot \alpha_0} \lam]_e \st \lam \in AB^{\text{ev}}_e \bs (AB^{\text{ev}}_v \cup S_v)\big\}
\] 
where $S_v$ denotes the singular weights of $AB^{\text{ev}}_e$.  Lemma~\ref{lemma-onevert} in turn describes the fusion ideal, which is written out in Table~1.

\pagestyle{plain}
Finally suppose the level $k$ is odd.  The collection of 240 weights
\[AB^{\text{odd}}_e = r(B_e) + (k+1) \frac{\lam_8}{2}\]
is a Steinberg basis for $Z_e$.  The projection of the even level weights of $AB^{\text{odd}}_e$ to the level $k+1$ singular plane is a collection of 54 weights 
\[
B_s := \Big\{ \lam + \frac{k+1-\Lv(\lam)}{2} \lam_8 \st \lam \in AB^{\text{odd}}_e, \Lv(\lam) = k+1 \;(\mathrm{mod}\; 2)\Big\}
\]
By Lemma~\ref{lemma-bs} the irreducible representations associated to these weights generate the modulo 2 singular weight module $R[S]/(2)$.  That collection $B_s$ includes the 44 singular weights of $AB^{\text{odd}}_e$; the fusion kernel, as described in Lemma~\ref{lemma-orthsing}, is therefore generated by 
\[
\big\{[\lam]_e \st \lam \in B_s\big\} \cup \big\{[\lam]_e + [w_{(k+1) \cdot \alpha_0} \lam]_e \st \lam \in AB^{\text{odd}}_e \bs (AB^{\text{odd}}_e \cap S)\big\}
\]
Of the 120 weights of $AB^{\text{odd}}_e$ of level greater than $k+1$, 35 weights reflect by $w_{(k+1) \cdot \alpha_0}$ into the 76 weights of $AB^{\text{odd}}_e$ of level less than $k+1$.  The collection of representations $\{[\lam]_e + [w_{(k+1) \cdot \alpha_0} \lam]_e \st \lam \in AB^{\text{odd}}_e \bs (AB^{\text{odd}}_e \cap S)\}$ therefore has 161 elements; these representations together with the representations $\{[\lam]_e \st \lam \in B_s\}$ generate the fusion kernel.  The resulting fusion ideal is written out in Table~2-1 and its continuation Table~2-2.

\begin{table}
\caption{Generators of the fusion ideal of $E_8$ at even level $k = 2 \ell$.} \label{table-e8ev}
\Small
\[
\begin{array}{ll}
[0,1,0,0,0,0,0,\ell\!-\!1],
[0,1,0,0,0,1,0,\ell\!-\!3], &
[0,1,0,0,1,0,1,\ell\!-\!5],
[0,0,0,0,1,0,0,\ell\!-\!2], \\

[0,0,0,1,0,0,1,\ell\!-\!4],
[0,0,1,0,0,0,1,\ell\!-\!3], &
[1,0,0,0,0,0,1,\ell\!-\!2],
[0,0,0,0,0,0,1,\ell\!-\!1], \\

[0,2,0,0,0,1,0,\ell\!-\!4]+[0,2,0,0,0,1,0,\ell\!-\!5], &
[0,2,0,0,1,1,1,\ell\!-\!8]+[0,2,0,0,1,1,1,\ell\!-\!9], \\

[0,1,0,0,2,0,1,\ell\!-\!7]+[0,1,0,0,2,0,1,\ell\!-\!8], &
[0,1,0,1,1,0,2,\ell\!-\!9]+[0,1,0,1,1,0,2,\ell\!-\!10], \\

[0,0,1,1,0,0,2,\ell\!-\!7]+[0,0,1,1,0,0,2,\ell\!-\!8], &
[1,0,1,0,0,0,2,\ell\!-\!5]+[1,0,1,0,0,0,2,\ell\!-\!6], \\

[1,0,0,0,0,0,2,\ell\!-\!3]+[1,0,0,0,0,0,2,\ell\!-\!4], &
[0,2,0,0,1,0,1,\ell\!-\!6]+[0,2,0,0,1,0,1,\ell\!-\!7], \\

[0,1,0,0,1,1,0,\ell\!-\!5]+[0,1,0,0,1,1,0,\ell\!-\!6], &
[0,1,0,1,0,1,1,\ell\!-\!7]+[0,1,0,1,0,1,1,\ell\!-\!8], \\

[0,0,0,1,1,0,1,\ell\!-\!6]+[0,0,0,1,1,0,1,\ell\!-\!7], &
[0,1,1,0,1,0,2,\ell\!-\!8]+[0,1,1,0,1,0,2,\ell\!-\!9], \\

[1,0,0,1,0,0,2,\ell\!-\!6]+[1,0,0,1,0,0,2,\ell\!-\!7], &
[0,0,1,0,0,0,2,\ell\!-\!4]+[0,0,1,0,0,0,2,\ell\!-\!5], \\

[0,1,0,0,1,0,0,\ell\!-\!3]+[0,1,0,0,1,0,0,\ell\!-\!4], &
[0,1,0,1,0,0,1,\ell\!-\!5]+[0,1,0,1,0,0,1,\ell\!-\!6], \\

[0,1,0,1,1,1,1,\ell\!-\!9]+[0,1,0,1,1,1,1,\ell\!-\!11], &
[0,1,1,0,0,1,1,\ell\!-\!6]+[0,1,1,0,0,1,1,\ell\!-\!7], \\

[0,0,1,0,1,0,1,\ell\!-\!5]+[0,0,1,0,1,0,1,\ell\!-\!6], &
[1,1,0,0,1,0,2,\ell\!-\!7]+[1,1,0,0,1,0,2,\ell\!-\!8], \\

[0,0,0,1,0,0,2,\ell\!-\!5]+[0,0,0,1,0,0,2,\ell\!-\!6], &
[0,1,0,1,1,0,1,\ell\!-\!7]+[0,1,0,1,1,0,1,\ell\!-\!9], \\

[0,0,0,1,0,1,0,\ell\!-\!4]+[0,0,0,1,0,1,0,\ell\!-\!5], &
[0,1,1,0,0,0,1,\ell\!-\!4]+[0,1,1,0,0,0,1,\ell\!-\!5], \\

[0,1,1,0,1,1,1,\ell\!-\!8]+[0,1,1,0,1,1,1,\ell\!-\!10], &
[1,1,0,0,0,1,1,\ell\!-\!5]+[1,1,0,0,0,1,1,\ell\!-\!6], \\

[1,0,0,0,1,0,1,\ell\!-\!4]+[1,0,0,0,1,0,1,\ell\!-\!5], &
[0,1,0,0,1,0,2,\ell\!-\!6]+[0,1,0,0,1,0,2,\ell\!-\!7], \\

[0,1,0,1,0,1,0,\ell\!-\!5]+[0,1,0,1,0,1,0,\ell\!-\!7], &
[0,1,1,0,1,0,1,\ell\!-\!6]+[0,1,1,0,1,0,1,\ell\!-\!8], \\

[0,0,1,1,0,1,1,\ell\!-\!7]+[0,0,1,1,0,1,1,\ell\!-\!9], &
[1,1,0,0,0,0,1,\ell\!-\!3]+[1,1,0,0,0,0,1,\ell\!-\!4], \\

[1,1,0,0,1,1,1,\ell\!-\!7]+[1,1,0,0,1,1,1,\ell\!-\!9], &
[0,1,0,0,0,1,1,\ell\!-\!4]+[0,1,0,0,0,1,1,\ell\!-\!5], \\

[0,0,0,0,1,0,1,\ell\!-\!3]+[0,0,0,0,1,0,1,\ell\!-\!4], &
[0,0,0,1,0,0,0,\ell\!-\!2]+[0,0,0,1,0,0,0,\ell\!-\!3], \\

[0,1,1,1,0,1,1,\ell\!-\!8]+[0,1,1,1,0,1,1,\ell\!-\!11], &
[0,0,1,0,0,1,0,\ell\!-\!3]+[0,0,1,0,0,1,0,\ell\!-\!4], \\

[1,1,0,0,1,0,1,\ell\!-\!5]+[1,1,0,0,1,0,1,\ell\!-\!7], &
[1,0,0,1,0,1,1,\ell\!-\!6]+[1,0,0,1,0,1,1,\ell\!-\!8], \\

[0,1,0,0,0,0,1,\ell\!-\!2]+[0,1,0,0,0,0,1,\ell\!-\!3], &
[0,1,0,0,1,1,1,\ell\!-\!6]+[0,1,0,0,1,1,1,\ell\!-\!8], \\

[0,0,1,1,0,0,1,\ell\!-\!5]+[0,0,1,1,0,0,1,\ell\!-\!7], &
[0,1,1,0,0,1,0,\ell\!-\!4]+[0,1,1,0,0,1,0,\ell\!-\!6], \\

[1,1,0,1,0,1,1,\ell\!-\!7]+[1,1,0,1,0,1,1,\ell\!-\!10], &
[1,0,1,0,0,1,1,\ell\!-\!5]+[1,0,1,0,0,1,1,\ell\!-\!7], \\

[0,1,0,0,1,0,1,\ell\!-\!4]+[0,1,0,0,1,0,1,\ell\!-\!6], &
[0,0,0,1,0,1,1,\ell\!-\!5]+[0,0,0,1,0,1,1,\ell\!-\!7], \\

[0,0,1,1,0,1,0,\ell\!-\!5]+[0,0,1,1,0,1,0,\ell\!-\!8], &
[1,0,0,1,0,0,1,\ell\!-\!4]+[1,0,0,1,0,0,1,\ell\!-\!6], \\

[1,1,1,0,0,1,1,\ell\!-\!6]+[1,1,1,0,0,1,1,\ell\!-\!9], &
[1,0,0,0,0,1,0,\ell\!-\!2]+[1,0,0,0,0,1,0,\ell\!-\!3], \\

[0,1,0,1,0,1,1,\ell\!-\!6]+[0,1,0,1,0,1,1,\ell\!-\!9], &
[0,0,1,0,0,1,1,\ell\!-\!4]+[0,0,1,0,0,1,1,\ell\!-\!6], \\

[0,0,1,0,1,0,0,\ell\!-\!3]+[0,0,1,0,1,0,0,\ell\!-\!5], &
[1,0,1,1,0,1,1,\ell\!-\!7]+[1,0,1,1,0,1,1,\ell\!-\!11], \\

[1,1,0,0,0,1,0,\ell\!-\!3]+[1,1,0,0,0,1,0,\ell\!-\!5], &
[0,0,0,1,0,0,1,\ell\!-\!3]+[0,0,0,1,0,0,1,\ell\!-\!5], \\

[0,1,1,0,0,1,1,\ell\!-\!5]+[0,1,1,0,0,1,1,\ell\!-\!8], &
[1,0,0,0,0,1,1,\ell\!-\!3]+[1,0,0,0,0,1,1,\ell\!-\!5], \\

[0,0,1,0,0,0,0,\ell\!-\!1]+[0,0,1,0,0,0,0,\ell\!-\!2], &
[1,0,1,0,1,0,1,\ell\!-\!5]+[1,0,1,0,1,0,1,\ell\!-\!8], \\

[1,0,0,1,0,1,0,\ell\!-\!4]+[1,0,0,1,0,1,0,\ell\!-\!7], &
[0,0,1,1,0,1,1,\ell\!-\!6]+[0,0,1,1,0,1,1,\ell\!-\!10], \\

[1,1,0,0,0,1,1,\ell\!-\!4]+[1,1,0,0,0,1,1,\ell\!-\!7], &
[0,0,0,0,0,1,0,\ell\!-\!1]+[0,0,0,0,0,1,0,\ell\!-\!2], \\

[1,0,1,0,0,0,1,\ell\!-\!3]+[1,0,1,0,0,0,1,\ell\!-\!5], &
[1,0,1,0,1,1,0,\ell\!-\!5]+[1,0,1,0,1,1,0,\ell\!-\!9], \\

[0,0,1,0,1,0,1,\ell\!-\!4]+[0,0,1,0,1,0,1,\ell\!-\!7], &
[1,0,0,1,0,1,1,\ell\!-\!5]+[1,0,0,1,0,1,1,\ell\!-\!9], \\

[0,1,0,0,0,1,0,\ell\!-\!2]+[0,1,0,0,0,1,0,\ell\!-\!4], &
[1,0,1,0,0,1,0,\ell\!-\!3]+[1,0,1,0,0,1,0,\ell\!-\!6], \\

[1,0,0,0,1,0,0,\ell\!-\!2]+[1,0,0,0,1,0,0,\ell\!-\!4], &
[0,0,1,0,0,0,1,\ell\!-\!2]+[0,0,1,0,0,0,1,\ell\!-\!4], \\

[1,0,1,0,1,1,1,\ell\!-\!6]+[1,0,1,0,1,1,1,\ell\!-\!11], &
[0,0,0,1,0,1,0,\ell\!-\!3]+[0,0,0,1,0,1,0,\ell\!-\!6], \\

[1,0,1,0,1,0,0,\ell\!-\!3]+[1,0,1,0,1,0,0,\ell\!-\!7], &
[1,0,1,0,0,1,1,\ell\!-\!4]+[1,0,1,0,0,1,1,\ell\!-\!8], \\

[1,0,0,0,1,0,1,\ell\!-\!3]+[1,0,0,0,1,0,1,\ell\!-\!6], &
[0,0,1,0,1,1,0,\ell\!-\!4]+[0,0,1,0,1,1,0,\ell\!-\!8], \\

[1,0,0,1,0,0,0,\ell\!-\!2]+[1,0,0,1,0,0,0,\ell\!-\!5], &
[1,0,1,0,1,0,1,\ell\!-\!4]+[1,0,1,0,1,0,1,\ell\!-\!9], \\

[0,0,1,0,0,1,0,\ell\!-\!2]+[0,0,1,0,0,1,0,\ell\!-\!5], &
[1,0,0,0,1,1,0,\ell\!-\!3]+[1,0,0,0,1,1,0,\ell\!-\!7], \\

[1,1,0,0,0,0,0,\ell\!-\!1]+[1,1,0,0,0,0,0,\ell\!-\!3], &
[1,0,0,1,0,0,1,\ell\!-\!3]+[1,0,0,1,0,0,1,\ell\!-\!7], \\

[1,0,1,0,1,1,0,\ell\!-\!4]+[1,0,1,0,1,1,0,\ell\!-\!10], &
[0,0,0,0,1,0,0,\ell\!-\!1]+[0,0,0,0,1,0,0,\ell\!-\!3], \\

[1,0,0,0,0,0,0,\ell]+[1,0,0,0,0,0,0,\ell\!-\!1], &
[1,1,0,0,0,0,1,\ell\!-\!2]+[1,1,0,0,0,0,1,\ell\!-\!5], \\

[1,0,0,1,0,1,0,\ell\!-\!3]+[1,0,0,1,0,1,0,\ell\!-\!8], &
[0,0,1,0,1,0,0,\ell\!-\!2]+[0,0,1,0,1,0,0,\ell\!-\!6], \\

[1,0,0,0,0,0,1,\ell\!-\!1]+[1,0,0,0,0,0,1,\ell\!-\!3], &
[1,1,0,0,0,1,0,\ell\!-\!2]+[1,1,0,0,0,1,0,\ell\!-\!6], \\

[1,0,0,1,1,0,0,\ell\!-\!3]+[1,0,0,1,1,0,0,\ell\!-\!9], &
[1,0,0,0,0,1,0,\ell\!-\!1]+[1,0,0,0,0,1,0,\ell\!-\!4], \\

[1,1,0,0,1,0,0,\ell\!-\!2]+[1,1,0,0,1,0,0,\ell\!-\!7], &
[0,0,0,1,0,0,0,\ell\!-\!1]+[0,0,0,1,0,0,0,\ell\!-\!4], \\

[1,0,0,0,1,0,0,\ell\!-\!1]+[1,0,0,0,1,0,0,\ell\!-\!5], &
[1,1,0,1,0,0,0,\ell\!-\!2]+[1,1,0,1,0,0,0,\ell\!-\!8], \\

[1,0,0,1,0,0,0,\ell\!-\!1]+[1,0,0,1,0,0,0,\ell\!-\!6], &
[0,1,1,0,0,0,0,\ell\!-\!1]+[0,1,1,0,0,0,0,\ell\!-\!5], \\

[1,1,1,0,0,0,0,\ell\!-\!1]+[1,1,1,0,0,0,0,\ell\!-\!7], &
[0,1,0,0,0,0,0,\ell]+[0,1,0,0,0,0,0,\ell\!-\!2], \\

[0,0,1,0,0,0,0,\ell]+[0,0,1,0,0,0,0,\ell\!-\!3], &
[1,1,0,0,0,0,0,\ell]+[1,1,0,0,0,0,0,\ell\!-\!4], \\

[0,1,1,0,0,0,0,\ell]+[0,1,1,0,0,0,0,\ell\!-\!6], &
[0,0,0,1,0,0,0,\ell]+[0,0,0,1,0,0,0,\ell\!-\!5], \\

[0,0,0,0,1,0,0,\ell]+[0,0,0,0,1,0,0,\ell\!-\!4], &
[0,0,0,0,0,1,0,\ell]+[0,0,0,0,0,1,0,\ell\!-\!3], \\

[0,0,0,0,0,0,1,\ell]+[0,0,0,0,0,0,1,\ell\!-\!2], &
[0,0,0,0,0,0,0,\ell\!+\!1]+[0,0,0,0,0,0,0,\ell]
\end{array}
\]
\normalsize
\end{table}

\begin{table}
\renewcommand{\thetable}{2-1}
\caption{Generators of the fusion ideal of $E_8$ at odd level $k$, with $\langle n \rangle := \frac{n}{2}$.} \label{table-e8odd1}
\small
\[
\begin{array}{ll}
[0,0,0,0,0,0,1,\langle k\!-\!3 \rangle]+[0,0,0,0,0,0,1,\langle k\!-\!1 \rangle], &
[0,0,0,0,0,1,0,\langle k\!-\!5 \rangle]+[0,0,0,0,0,1,0,\langle k\!-\!1 \rangle], \\

[0,0,0,0,1,0,0,\langle k\!-\!7 \rangle]+[0,0,0,0,1,0,0,\langle k\!-\!1 \rangle], &
[0,0,0,1,0,0,0,\langle k\!-\!9 \rangle]+[0,0,0,1,0,0,0,\langle k\!-\!1 \rangle], \\

[0,1,1,0,0,0,0,\langle k\!-\!11 \rangle]+[0,1,1,0,0,0,0,\langle k\!-\!1 \rangle], &
[0,0,1,0,0,0,0,\langle k\!-\!5 \rangle]+[0,0,1,0,0,0,0,\langle k\!-\!1 \rangle], \\

[1,1,0,0,0,0,0,\langle k\!-\!7 \rangle]+[1,1,0,0,0,0,0,\langle k\!-\!1 \rangle], &
[1,1,1,0,0,0,0,\langle k\!-\!13 \rangle]+[1,1,1,0,0,0,0,\langle k\!-\!3 \rangle], \\

[0,1,0,0,0,0,0,\langle k\!-\!3 \rangle]+[0,1,0,0,0,0,0,\langle k\!-\!1 \rangle], &
[1,0,0,1,0,0,0,\langle k\!-\!11 \rangle]+[1,0,0,1,0,0,0,\langle k\!-\!3 \rangle], \\

[0,1,1,0,0,0,0,\langle k\!-\!9 \rangle]+[0,1,1,0,0,0,0,\langle k\!-\!3 \rangle], &
[1,0,0,0,1,0,0,\langle k\!-\!9 \rangle]+[1,0,0,0,1,0,0,\langle k\!-\!3 \rangle], \\

[1,1,0,1,0,0,0,\langle k\!-\!15 \rangle]+[1,1,0,1,0,0,0,\langle k\!-\!5 \rangle], &
[1,0,0,0,0,1,0,\langle k\!-\!7 \rangle]+[1,0,0,0,0,1,0,\langle k\!-\!3 \rangle], \\

[1,1,0,0,1,0,0,\langle k\!-\!13 \rangle]+[1,1,0,0,1,0,0,\langle k\!-\!5 \rangle], &
[0,0,0,1,0,0,0,\langle k\!-\!7 \rangle]+[0,0,0,1,0,0,0,\langle k\!-\!3 \rangle], \\

[1,0,0,0,0,0,1,\langle k\!-\!5 \rangle]+[1,0,0,0,0,0,1,\langle k\!-\!3 \rangle], &
[1,1,0,0,0,1,0,\langle k\!-\!11 \rangle]+[1,1,0,0,0,1,0,\langle k\!-\!5 \rangle], \\

[1,0,0,1,1,0,0,\langle k\!-\!17 \rangle]+[1,0,0,1,1,0,0,\langle k\!-\!7 \rangle], &
[1,1,0,0,0,0,1,\langle k\!-\!9 \rangle]+[1,1,0,0,0,0,1,\langle k\!-\!5 \rangle], \\

[1,0,0,1,0,1,0,\langle k\!-\!15 \rangle]+[1,0,0,1,0,1,0,\langle k\!-\!7 \rangle], &
[0,0,1,0,1,0,0,\langle k\!-\!11 \rangle]+[0,0,1,0,1,0,0,\langle k\!-\!5 \rangle], \\

[1,1,0,0,0,0,0,\langle k\!-\!5 \rangle]+[1,1,0,0,0,0,0,\langle k\!-\!3 \rangle], &
[1,0,0,1,0,0,1,\langle k\!-\!13 \rangle]+[1,0,0,1,0,0,1,\langle k\!-\!7 \rangle], \\

[1,0,1,0,1,1,0,\langle k\!-\!19 \rangle]+[1,0,1,0,1,1,0,\langle k\!-\!9 \rangle], &
[0,0,0,0,1,0,0,\langle k\!-\!5 \rangle]+[0,0,0,0,1,0,0,\langle k\!-\!3 \rangle], \\

[1,0,0,1,0,0,0,\langle k\!-\!9 \rangle]+[1,0,0,1,0,0,0,\langle k\!-\!5 \rangle], &
[1,0,1,0,1,0,1,\langle k\!-\!17 \rangle]+[1,0,1,0,1,0,1,\langle k\!-\!9 \rangle], \\

[0,0,1,0,0,1,0,\langle k\!-\!9 \rangle]+[0,0,1,0,0,1,0,\langle k\!-\!5 \rangle], &
[1,0,0,0,1,1,0,\langle k\!-\!13 \rangle]+[1,0,0,0,1,1,0,\langle k\!-\!7 \rangle], \\

[1,0,1,0,1,0,0,\langle k\!-\!13 \rangle]+[1,0,1,0,1,0,0,\langle k\!-\!7 \rangle], &
[1,0,1,0,0,1,1,\langle k\!-\!15 \rangle]+[1,0,1,0,0,1,1,\langle k\!-\!9 \rangle], \\

[1,0,0,0,1,0,1,\langle k\!-\!11 \rangle]+[1,0,0,0,1,0,1,\langle k\!-\!7 \rangle], &
[0,0,1,0,1,1,0,\langle k\!-\!15 \rangle]+[0,0,1,0,1,1,0,\langle k\!-\!9 \rangle], \\

[1,0,1,0,0,1,0,\langle k\!-\!11 \rangle]+[1,0,1,0,0,1,0,\langle k\!-\!7 \rangle], &
[0,0,1,0,0,0,1,\langle k\!-\!7 \rangle]+[0,0,1,0,0,0,1,\langle k\!-\!5 \rangle], \\

[1,0,0,0,1,0,0,\langle k\!-\!7 \rangle]+[1,0,0,0,1,0,0,\langle k\!-\!5 \rangle], &
[1,0,1,0,1,1,1,\langle k\!-\!21 \rangle]+[1,0,1,0,1,1,1,\langle k\!-\!13 \rangle], \\

[0,0,0,1,0,1,0,\langle k\!-\!11 \rangle]+[0,0,0,1,0,1,0,\langle k\!-\!7 \rangle], &
[1,0,1,0,0,0,1,\langle k\!-\!9 \rangle]+[1,0,1,0,0,0,1,\langle k\!-\!7 \rangle], \\

[1,0,1,0,1,1,0,\langle k\!-\!17 \rangle]+[1,0,1,0,1,1,0,\langle k\!-\!11 \rangle], &
[0,0,1,0,1,0,1,\langle k\!-\!13 \rangle]+[0,0,1,0,1,0,1,\langle k\!-\!9 \rangle], \\

[1,0,0,1,0,1,1,\langle k\!-\!17 \rangle]+[1,0,0,1,0,1,1,\langle k\!-\!11 \rangle], &
[0,1,0,0,0,1,0,\langle k\!-\!7 \rangle]+[0,1,0,0,0,1,0,\langle k\!-\!5 \rangle], \\

[1,0,1,0,1,0,1,\langle k\!-\!15 \rangle]+[1,0,1,0,1,0,1,\langle k\!-\!11 \rangle], &
[1,0,0,1,0,1,0,\langle k\!-\!13 \rangle]+[1,0,0,1,0,1,0,\langle k\!-\!9 \rangle], \\

[0,0,1,1,0,1,1,\langle k\!-\!19 \rangle]+[0,0,1,1,0,1,1,\langle k\!-\!13 \rangle], &
[1,1,0,0,0,1,1,\langle k\!-\!13 \rangle]+[1,1,0,0,0,1,1,\langle k\!-\!9 \rangle], \\

[0,0,1,0,1,0,0,\langle k\!-\!9 \rangle]+[0,0,1,0,1,0,0,\langle k\!-\!7 \rangle], &
[1,0,1,1,0,1,1,\langle k\!-\!21 \rangle]+[1,0,1,1,0,1,1,\langle k\!-\!15 \rangle], \\

[0,0,0,1,0,0,1,\langle k\!-\!9 \rangle]+[0,0,0,1,0,0,1,\langle k\!-\!7 \rangle], &
[1,1,0,0,0,1,0,\langle k\!-\!9 \rangle]+[1,1,0,0,0,1,0,\langle k\!-\!7 \rangle], \\

[0,1,1,0,0,1,1,\langle k\!-\!15 \rangle]+[0,1,1,0,0,1,1,\langle k\!-\!11 \rangle], &
[1,0,0,0,0,1,1,\langle k\!-\!9 \rangle]+[1,0,0,0,0,1,1,\langle k\!-\!7 \rangle], \\

[0,0,1,1,0,1,0,\langle k\!-\!15 \rangle]+[0,0,1,1,0,1,0,\langle k\!-\!11 \rangle], &
[1,0,0,1,0,0,1,\langle k\!-\!11 \rangle]+[1,0,0,1,0,0,1,\langle k\!-\!9 \rangle], \\

[1,1,1,0,0,1,1,\langle k\!-\!17 \rangle]+[1,1,1,0,0,1,1,\langle k\!-\!13 \rangle], &
[0,1,0,1,0,1,1,\langle k\!-\!17 \rangle]+[0,1,0,1,0,1,1,\langle k\!-\!13 \rangle], \\

[0,0,1,0,0,1,1,\langle k\!-\!11 \rangle]+[0,0,1,0,0,1,1,\langle k\!-\!9 \rangle], &
[0,0,1,1,0,0,1,\langle k\!-\!13 \rangle]+[0,0,1,1,0,0,1,\langle k\!-\!11 \rangle], \\

[0,1,1,0,0,1,0,\langle k\!-\!11 \rangle]+[0,1,1,0,0,1,0,\langle k\!-\!9 \rangle], &
[1,1,0,1,0,1,1,\langle k\!-\!19 \rangle]+[1,1,0,1,0,1,1,\langle k\!-\!15 \rangle], \\

[0,1,0,0,1,0,1,\langle k\!-\!11 \rangle]+[0,1,0,0,1,0,1,\langle k\!-\!9 \rangle], &
[1,0,1,0,0,1,1,\langle k\!-\!13 \rangle]+[1,0,1,0,0,1,1,\langle k\!-\!11 \rangle], \\

[0,0,0,1,0,1,1,\langle k\!-\!13 \rangle]+[0,0,0,1,0,1,1,\langle k\!-\!11 \rangle], &
[0,1,1,1,0,1,1,\langle k\!-\!21 \rangle]+[0,1,1,1,0,1,1,\langle k\!-\!17 \rangle], \\

[1,1,0,0,1,0,1,\langle k\!-\!13 \rangle]+[1,1,0,0,1,0,1,\langle k\!-\!11 \rangle], &
[1,0,0,1,0,1,1,\langle k\!-\!15 \rangle]+[1,0,0,1,0,1,1,\langle k\!-\!13 \rangle], \\

[0,1,0,0,1,1,1,\langle k\!-\!15 \rangle]+[0,1,0,0,1,1,1,\langle k\!-\!13 \rangle], &
[0,1,0,1,0,1,0,\langle k\!-\!13 \rangle]+[0,1,0,1,0,1,0,\langle k\!-\!11 \rangle], \\

[0,1,1,0,1,0,1,\langle k\!-\!15 \rangle]+[0,1,1,0,1,0,1,\langle k\!-\!13 \rangle], &
[0,0,1,1,0,1,1,\langle k\!-\!17 \rangle]+[0,0,1,1,0,1,1,\langle k\!-\!15 \rangle], \\

[1,1,0,0,1,1,1,\langle k\!-\!17 \rangle]+[1,1,0,0,1,1,1,\langle k\!-\!15 \rangle], &
[0,1,0,1,1,0,1,\langle k\!-\!17 \rangle]+[0,1,0,1,1,0,1,\langle k\!-\!15 \rangle], \\

[0,1,1,0,1,1,1,\langle k\!-\!19 \rangle]+[0,1,1,0,1,1,1,\langle k\!-\!17 \rangle], &
[0,1,0,1,1,1,1,\langle k\!-\!21 \rangle]+[0,1,0,1,1,1,1,\langle k\!-\!19 \rangle], \\

[0,2,0,0,0,1,0,\langle k\!-\!7 \rangle]+[0,2,0,0,0,1,0,\langle k\!-\!11 \rangle], &
[0,2,0,0,1,1,1,\langle k\!-\!15 \rangle]+[0,2,0,0,1,1,1,\langle k\!-\!19 \rangle], \\

[0,1,0,0,2,0,1,\langle k\!-\!13 \rangle]+[0,1,0,0,2,0,1,\langle k\!-\!17 \rangle], &
[0,1,0,1,1,0,2,\langle k\!-\!17 \rangle]+[0,1,0,1,1,0,2,\langle k\!-\!21 \rangle], \\

[0,0,1,1,0,0,2,\langle k\!-\!13 \rangle]+[0,0,1,1,0,0,2,\langle k\!-\!17 \rangle], &
[1,0,1,0,0,0,2,\langle k\!-\!9 \rangle]+[1,0,1,0,0,0,2,\langle k\!-\!13 \rangle], \\

[1,0,0,0,0,0,2,\langle k\!-\!5 \rangle]+[1,0,0,0,0,0,2,\langle k\!-\!9 \rangle], &
[0,2,0,0,1,0,1,\langle k\!-\!11 \rangle]+[0,2,0,0,1,0,1,\langle k\!-\!15 \rangle], \\

[0,1,0,0,1,1,0,\langle k\!-\!9 \rangle]+[0,1,0,0,1,1,0,\langle k\!-\!13 \rangle], &
[0,0,0,1,1,0,1,\langle k\!-\!11 \rangle]+[0,0,0,1,1,0,1,\langle k\!-\!15 \rangle], \\

[0,1,1,0,1,0,2,\langle k\!-\!15 \rangle]+[0,1,1,0,1,0,2,\langle k\!-\!19 \rangle], &
[1,0,0,1,0,0,2,\langle k\!-\!11 \rangle]+[1,0,0,1,0,0,2,\langle k\!-\!15 \rangle], \\

[0,0,1,0,0,0,2,\langle k\!-\!7 \rangle]+[0,0,1,0,0,0,2,\langle k\!-\!11 \rangle], &
[0,1,0,0,1,0,0,\langle k\!-\!5 \rangle]+[0,1,0,0,1,0,0,\langle k\!-\!9 \rangle], \\

[0,1,0,1,0,0,1,\langle k\!-\!9 \rangle]+[0,1,0,1,0,0,1,\langle k\!-\!13 \rangle], &
[0,1,0,1,1,1,1,\langle k\!-\!17 \rangle]+[0,1,0,1,1,1,1,\langle k\!-\!23 \rangle], \\

[1,1,0,0,1,0,2,\langle k\!-\!13 \rangle]+[1,1,0,0,1,0,2,\langle k\!-\!17 \rangle], &
[0,0,0,1,0,0,2,\langle k\!-\!9 \rangle]+[0,0,0,1,0,0,2,\langle k\!-\!13 \rangle], 
\end{array}
\]
\end{table}

\begin{table}
\renewcommand{\thetable}{2-2}
\caption{Generators of the fusion ideal of $E_8$ at odd level $k$, with $\langle n \rangle := \frac{n}{2}$.  (Cont.)} \label{table-e8odd2}
\small
\[
\begin{array}{ll}
[0,1,0,1,1,0,1,\langle k\!-\!13 \rangle]+[0,1,0,1,1,0,1,\langle k\!-\!19 \rangle], &
[0,1,1,0,0,0,1,\langle k\!-\!7 \rangle]+[0,1,1,0,0,0,1,\langle k\!-\!11 \rangle], \\

[0,1,1,0,1,1,1,\langle k\!-\!15 \rangle]+[0,1,1,0,1,1,1,\langle k\!-\!21 \rangle], &
[0,1,0,0,1,0,2,\langle k\!-\!11 \rangle]+[0,1,0,0,1,0,2,\langle k\!-\!15 \rangle], \\

[0,1,0,1,0,1,0,\langle k\!-\!9 \rangle]+[0,1,0,1,0,1,0,\langle k\!-\!15 \rangle], &
[0,1,1,0,1,0,1,\langle k\!-\!11 \rangle]+[0,1,1,0,1,0,1,\langle k\!-\!17 \rangle], \\

[1,1,0,0,1,1,1,\langle k\!-\!13 \rangle]+[1,1,0,0,1,1,1,\langle k\!-\!19 \rangle], &
[0,1,0,0,0,1,1,\langle k\!-\!7 \rangle]+[0,1,0,0,0,1,1,\langle k\!-\!11 \rangle], \\

[0,0,0,0,1,0,1,\langle k\!-\!5 \rangle]+[0,0,0,0,1,0,1,\langle k\!-\!9 \rangle], &
[0,1,1,1,0,1,1,\langle k\!-\!15 \rangle]+[0,1,1,1,0,1,1,\langle k\!-\!23 \rangle], \\

[1,1,0,0,1,0,1,\langle k\!-\!9 \rangle]+[1,1,0,0,1,0,1,\langle k\!-\!15 \rangle], &
[0,1,0,0,0,0,1,\langle k\!-\!3 \rangle]+[0,1,0,0,0,0,1,\langle k\!-\!7 \rangle], \\

[0,1,0,0,1,1,1,\langle k\!-\!11 \rangle]+[0,1,0,0,1,1,1,\langle k\!-\!17 \rangle], &
[0,0,1,1,0,0,1,\langle k\!-\!9 \rangle]+[0,0,1,1,0,0,1,\langle k\!-\!15 \rangle], \\

[0,1,1,0,0,1,0,\langle k\!-\!7 \rangle]+[0,1,1,0,0,1,0,\langle k\!-\!13 \rangle], &
[1,1,0,1,0,1,1,\langle k\!-\!13 \rangle]+[1,1,0,1,0,1,1,\langle k\!-\!21 \rangle], \\

[0,1,0,0,1,0,1,\langle k\!-\!7 \rangle]+[0,1,0,0,1,0,1,\langle k\!-\!13 \rangle], &
[0,0,0,1,0,1,1,\langle k\!-\!9 \rangle]+[0,0,0,1,0,1,1,\langle k\!-\!15 \rangle], \\

[0,0,1,1,0,1,0,\langle k\!-\!9 \rangle]+[0,0,1,1,0,1,0,\langle k\!-\!17 \rangle], &
[1,1,1,0,0,1,1,\langle k\!-\!11 \rangle]+[1,1,1,0,0,1,1,\langle k\!-\!19 \rangle], \\

[0,1,0,1,0,1,1,\langle k\!-\!11 \rangle]+[0,1,0,1,0,1,1,\langle k\!-\!19 \rangle], &
[0,0,1,0,0,1,1,\langle k\!-\!7 \rangle]+[0,0,1,0,0,1,1,\langle k\!-\!13 \rangle], \\

[1,0,1,1,0,1,1,\langle k\!-\!13 \rangle]+[1,0,1,1,0,1,1,\langle k\!-\!23 \rangle], &
[0,0,0,1,0,0,1,\langle k\!-\!5 \rangle]+[0,0,0,1,0,0,1,\langle k\!-\!11 \rangle], \\

[0,1,1,0,0,1,1,\langle k\!-\!9 \rangle]+[0,1,1,0,0,1,1,\langle k\!-\!17 \rangle], &
[1,0,0,0,0,1,1,\langle k\!-\!5 \rangle]+[1,0,0,0,0,1,1,\langle k\!-\!11 \rangle], \\

[0,0,1,1,0,1,1,\langle k\!-\!11 \rangle]+[0,0,1,1,0,1,1,\langle k\!-\!21 \rangle], &
[1,1,0,0,0,1,1,\langle k\!-\!7 \rangle]+[1,1,0,0,0,1,1,\langle k\!-\!15 \rangle], \\

[1,0,1,0,0,0,1,\langle k\!-\!5 \rangle]+[1,0,1,0,0,0,1,\langle k\!-\!11 \rangle], &
[0,0,1,0,1,0,1,\langle k\!-\!7 \rangle]+[0,0,1,0,1,0,1,\langle k\!-\!15 \rangle], \\

[1,0,0,1,0,1,1,\langle k\!-\!9 \rangle]+[1,0,0,1,0,1,1,\langle k\!-\!19 \rangle], &
[0,1,0,0,0,1,0,\langle k\!-\!3 \rangle]+[0,1,0,0,0,1,0,\langle k\!-\!9 \rangle], \\

[1,0,1,0,0,1,0,\langle k\!-\!5 \rangle]+[1,0,1,0,0,1,0,\langle k\!-\!13 \rangle], &
[0,0,1,0,0,0,1,\langle k\!-\!3 \rangle]+[0,0,1,0,0,0,1,\langle k\!-\!9 \rangle], \\

[1,0,1,0,1,1,1,\langle k\!-\!11 \rangle]+[1,0,1,0,1,1,1,\langle k\!-\!23 \rangle], &
[0,0,0,1,0,1,0,\langle k\!-\!5 \rangle]+[0,0,0,1,0,1,0,\langle k\!-\!13 \rangle], \\

[1,0,1,0,1,0,0,\langle k\!-\!5 \rangle]+[1,0,1,0,1,0,0,\langle k\!-\!15 \rangle], &
[1,0,1,0,0,1,1,\langle k\!-\!7 \rangle]+[1,0,1,0,0,1,1,\langle k\!-\!17 \rangle], \\

[1,0,0,0,1,0,1,\langle k\!-\!5 \rangle]+[1,0,0,0,1,0,1,\langle k\!-\!13 \rangle], &
[0,0,1,0,1,1,0,\langle k\!-\!7 \rangle]+[0,0,1,0,1,1,0,\langle k\!-\!17 \rangle], \\

[1,0,1,0,1,0,1,\langle k\!-\!7 \rangle]+[1,0,1,0,1,0,1,\langle k\!-\!19 \rangle], &
[0,0,1,0,0,1,0,\langle k\!-\!3 \rangle]+[0,0,1,0,0,1,0,\langle k\!-\!11 \rangle], \\

[1,0,0,0,1,1,0,\langle k\!-\!5 \rangle]+[1,0,0,0,1,1,0,\langle k\!-\!15 \rangle], &
[1,0,0,1,0,0,1,\langle k\!-\!5 \rangle]+[1,0,0,1,0,0,1,\langle k\!-\!15 \rangle], \\

[1,0,1,0,1,1,0,\langle k\!-\!7 \rangle]+[1,0,1,0,1,1,0,\langle k\!-\!21 \rangle], &
[1,0,0,0,0,0,0,\langle k\!+\!1 \rangle]+[1,0,0,0,0,0,0,\langle k\!-\!3 \rangle], \\

[1,1,0,0,0,0,1,\langle k\!-\!3 \rangle]+[1,1,0,0,0,0,1,\langle k\!-\!11 \rangle], &
[1,0,0,1,0,1,0,\langle k\!-\!5 \rangle]+[1,0,0,1,0,1,0,\langle k\!-\!17 \rangle], \\

[0,0,1,0,1,0,0,\langle k\!-\!3 \rangle]+[0,0,1,0,1,0,0,\langle k\!-\!13 \rangle], &
[1,0,0,0,0,0,1,\langle k\!-\!1 \rangle]+[1,0,0,0,0,0,1,\langle k\!-\!7 \rangle], \\

[1,1,0,0,0,1,0,\langle k\!-\!3 \rangle]+[1,1,0,0,0,1,0,\langle k\!-\!13 \rangle], &
[1,0,0,1,1,0,0,\langle k\!-\!5 \rangle]+[1,0,0,1,1,0,0,\langle k\!-\!19 \rangle], \\

[1,0,0,0,0,1,0,\langle k\!-\!1 \rangle]+[1,0,0,0,0,1,0,\langle k\!-\!9 \rangle], &
[1,1,0,0,1,0,0,\langle k\!-\!3 \rangle]+[1,1,0,0,1,0,0,\langle k\!-\!15 \rangle], \\

[1,0,0,0,1,0,0,\langle k\!-\!1 \rangle]+[1,0,0,0,1,0,0,\langle k\!-\!11 \rangle], &
[1,1,0,1,0,0,0,\langle k\!-\!3 \rangle]+[1,1,0,1,0,0,0,\langle k\!-\!17 \rangle], \\

[1,0,0,1,0,0,0,\langle k\!-\!1 \rangle]+[1,0,0,1,0,0,0,\langle k\!-\!13 \rangle], &
[1,1,1,0,0,0,0,\langle k\!-\!1 \rangle]+[1,1,1,0,0,0,0,\langle k\!-\!15 \rangle], \\

[0,1,0,0,0,0,0,\langle k\!+\!1 \rangle]+[0,1,0,0,0,0,0,\langle k\!-\!5 \rangle], &
[0,0,1,0,0,0,0,\langle k\!+\!1 \rangle]+[0,0,1,0,0,0,0,\langle k\!-\!7 \rangle], \\

[1,1,0,0,0,0,0,\langle k\!+\!1 \rangle]+[1,1,0,0,0,0,0,\langle k\!-\!9 \rangle], &
[0,1,1,0,0,0,0,\langle k\!+\!1 \rangle]+[0,1,1,0,0,0,0,\langle k\!-\!13 \rangle], \\

[0,0,0,1,0,0,0,\langle k\!+\!1 \rangle]+[0,0,0,1,0,0,0,\langle k\!-\!11 \rangle], &
[0,0,0,0,1,0,0,\langle k\!+\!1 \rangle]+[0,0,0,0,1,0,0,\langle k\!-\!9 \rangle], \\

[0,0,0,0,0,1,0,\langle k\!+\!1 \rangle]+[0,0,0,0,0,1,0,\langle k\!-\!7 \rangle], &
[0,0,0,0,0,0,1,\langle k\!+\!1 \rangle]+[0,0,0,0,0,0,1,\langle k\!-\!5 \rangle], \\

[0,0,0,0,0,0,0,\langle k\!+\!3 \rangle]+[0,0,0,0,0,0,0,\langle k\!-\!1 \rangle], &
[0,0,0,0,0,0,0,\langle k\!+\!1 \rangle],
[1,0,0,0,0,0,0,\langle k\!-\!1 \rangle], \\

[0,0,1,0,0,0,0,\langle k\!-\!3 \rangle],
[0,0,0,0,0,1,0,\langle k\!-\!3 \rangle], &
[1,0,0,0,0,1,0,\langle k\!-\!5 \rangle],
[0,0,0,1,0,0,0,\langle k\!-\!5 \rangle], \\

[0,0,1,0,0,1,0,\langle k\!-\!7 \rangle],
[0,1,0,0,0,0,1,\langle k\!-\!5 \rangle], &
[1,1,0,0,0,0,1,\langle k\!-\!7 \rangle],
[0,1,0,0,0,1,1,\langle k\!-\!9 \rangle], \\

[0,0,0,0,1,0,1,\langle k\!-\!7 \rangle],
[0,1,1,0,0,0,1,\langle k\!-\!9 \rangle], &
[0,0,0,1,0,1,0,\langle k\!-\!9 \rangle],
[1,1,0,0,0,1,1,\langle k\!-\!11 \rangle], \\

[1,0,0,0,1,0,1,\langle k\!-\!9 \rangle],
[0,1,0,0,1,0,2,\langle k\!-\!13 \rangle], &
[0,1,0,0,1,0,0,\langle k\!-\!7 \rangle],
[0,1,0,1,0,0,1,\langle k\!-\!11 \rangle], \\

[0,1,1,0,0,1,1,\langle k\!-\!13 \rangle],
[0,0,1,0,1,0,1,\langle k\!-\!11 \rangle], &
[1,1,0,0,1,0,2,\langle k\!-\!15 \rangle],
[0,0,0,1,0,0,2,\langle k\!-\!11 \rangle], \\

[0,2,0,0,1,0,1,\langle k\!-\!13 \rangle],
[0,1,0,0,1,1,0,\langle k\!-\!11 \rangle], &
[0,1,0,1,0,1,1,\langle k\!-\!15 \rangle],
[0,0,0,1,1,0,1,\langle k\!-\!13 \rangle], \\

[0,1,1,0,1,0,2,\langle k\!-\!17 \rangle],
[1,0,0,1,0,0,2,\langle k\!-\!13 \rangle], &
[0,0,1,0,0,0,2,\langle k\!-\!9 \rangle],
[0,2,0,0,0,1,0,\langle k\!-\!9 \rangle], \\

[0,2,0,0,1,1,1,\langle k\!-\!17 \rangle],
[0,1,0,0,2,0,1,\langle k\!-\!15 \rangle], &
[0,1,0,1,1,0,2,\langle k\!-\!19 \rangle],
[0,0,1,1,0,0,2,\langle k\!-\!15 \rangle], \\

[1,0,1,0,0,0,2,\langle k\!-\!11 \rangle],
[1,0,0,0,0,0,2,\langle k\!-\!7 \rangle], &
[0,2,0,0,0,0,0,\langle k\!-\!5 \rangle],
[0,2,0,0,0,2,0,\langle k\!-\!13 \rangle], \\

[0,2,0,0,2,0,2,\langle k\!-\!21 \rangle],
[0,0,0,0,2,0,0,\langle k\!-\!9 \rangle], &
[0,0,0,2,0,0,2,\langle k\!-\!17 \rangle],
[0,0,2,0,0,0,2,\langle k\!-\!13 \rangle], \\

[2,0,0,0,0,0,2,\langle k\!-\!9 \rangle],
[0,0,0,0,0,0,2,\langle k\!-\!5 \rangle], &
[1,0,0,1,0,0,0,\langle k\!-\!7 \rangle],
[1,1,0,0,1,0,0,\langle k\!-\!9 \rangle], \\

[1,0,0,1,0,1,0,\langle k\!-\!11 \rangle],
[1,0,1,0,1,0,1,\langle k\!-\!13 \rangle], &
[1,0,1,0,0,1,0,\langle k\!-\!9 \rangle],
[1,0,1,0,1,1,1,\langle k\!-\!17 \rangle], \\

[0,0,1,1,0,1,0,\langle k\!-\!13 \rangle],
[1,1,1,0,0,1,1,\langle k\!-\!15 \rangle], &
[1,1,0,1,0,1,1,\langle k\!-\!17 \rangle],
[0,1,1,1,0,1,1,\langle k\!-\!19 \rangle]
\end{array}
\]
\end{table}

\newpage

\bibliography{fusionii}
\bibliographystyle{plain}

\end{document}